\tikzset{commutative diagrams/.cd}
\tikzstyle{every node}=[anchor=west, minimum height=3em]
\definecolor{forestgreen}{rgb}{0.00, 0.39, 0.00} 
\definecolor{blueblue}{rgb}{0.40, 0.00, 1.00} 
\definecolor{sienna}{rgb}{0.33, 0.08, 0.11}
\newcommand{\defi}{\textbf}
\theoremstyle{plain}
\newtheorem{theorem}{Theorem}[section] 
\newtheorem*{theorem*}{Theorem}
\newtheorem*{theoremfr*}{Théorème}
\newtheorem*{hypothesisfr*}{Hypothèse}
\newtheorem*{hypothesis*}{Hypothesis}
\newtheorem{lemma}[theorem]{Lemma}
\newtheorem{proposition}[theorem]{Proposition}
\newtheorem{corollary}[theorem]{Corollary}
\newtheorem{notation}[theorem]{Notation}
\theoremstyle{definition}
\newtheorem{definition}[theorem]{Definition}
\newtheorem*{definition*}{Definition}
\theoremstyle{remark}
\newtheorem{remark}[theorem]{\sc Remark}
\newtheorem*{question*}{\sc Question}
\newtheorem*{remark*}{\sc Remark}
\newtheorem*{remarkfr*}{\sc Remarque}
\newtheorem*{examplefr*}{\sc Exemple}
\newtheorem*{example*}{\sc Example}
\newtheorem{example}[theorem]{\sc Example}
\def\bR{{\mathbb R}}
\def\const.{{\rm const.}}
\def\Int{{\rm Int\ }}
\pgfplotsset{width=7cm,compat=1.8}
\renewcommand*{\numberline}[1]{\hb@xt@1em{#1\hfil}} 
\keywords{generic projection, real algebraic curve, Poincaré-Reeb tree, permutation, snake, polar curve, bitangent, inflection, dual curve, strict local minimum}
\pgfplotsset{width=7cm,compat=1.8}
\begin{document}
\title{Permutations encoding the local shape of level curves of real polynomials via generic projections}
\date{\today}
\author{Miruna-\c Stefana Sorea}
\address{Max-Planck-Institut für Mathematik in den Naturwissenschaften, Leipzig, Germany}
\email{miruna.sorea@mis.mpg.de}
\maketitle

\section*{Abstract}\label{sect:CrestsValleys}
The non-convexity of a smooth and compact connected component of a real algebraic plane curve can be measured by a combinatorial object called the Poincaré-Reeb tree associated to the curve and to a direction of projection. In this paper we show that if the chosen projection avoids the bitangents and the inflectional tangencies to the small enough level curves of a real bivariate polynomial function near a strict local minimum at the origin, then the asymptotic Poincaré-Reeb tree becomes a complete binary tree and its vertices become endowed with a total order relation. Such a projection direction is called generic. We prove that for any such asymptotic family of level curves, there are finitely many intervals on the real projective line outside of which all the directions are generic with respect to all the curves in the family. If the choice of the direction of projection is generic, then the local shape of the curves can be encoded in terms of alternating permutations, that we call snakes. The snakes offer an effective description of the local geometry and topology, well-suited for further computations. 
\section{Introduction}
Let $f:\mathbb{R}^2\rightarrow\mathbb{R}$ be a polynomial function that vanishes at the origin $O$ and that has a strict local minimum at this point. Consider the set $f^{-1}([0,\varepsilon])$ and denote the connected component that contains the origin by $\mathcal{D}_\varepsilon$. For sufficiently small $\varepsilon>0,$ $\mathcal{D}_\varepsilon$ is a topological disk bounded by a smooth Jordan curve, denoted by $\mathcal{C}_\varepsilon$ (see \cite[Lemma 5.3]{So3}). Such a curve is called \emph{the real Milnor fibre} of the polynomial $f$ at the origin.

\subsection{Setting and existing results}
Whenever the origin is a Morse strict local minimum (that is, the Hessian matrix at $O$ is nonsingular; for example, $f(x,y)=x^2+y^2$), and $\varepsilon>0$ is small enough, the topological disk $\mathcal{D}_\varepsilon$ is convex (\cite[Theorem 2.3]{So3}). In this case, by abuse of language we say that $\mathcal{C}_\varepsilon$ is a convex curve. Note that the levels \emph{far} from the origin may be non-convex, as we show in Figure \ref{fig:convex} below. However, throughout this paper we are only interested in the behaviour of the \emph{small enough} level curves near a strict local minimum.

{\centering\vspace{10pt}
\includegraphics[scale=0.5,trim=120 10 100 30, clip]{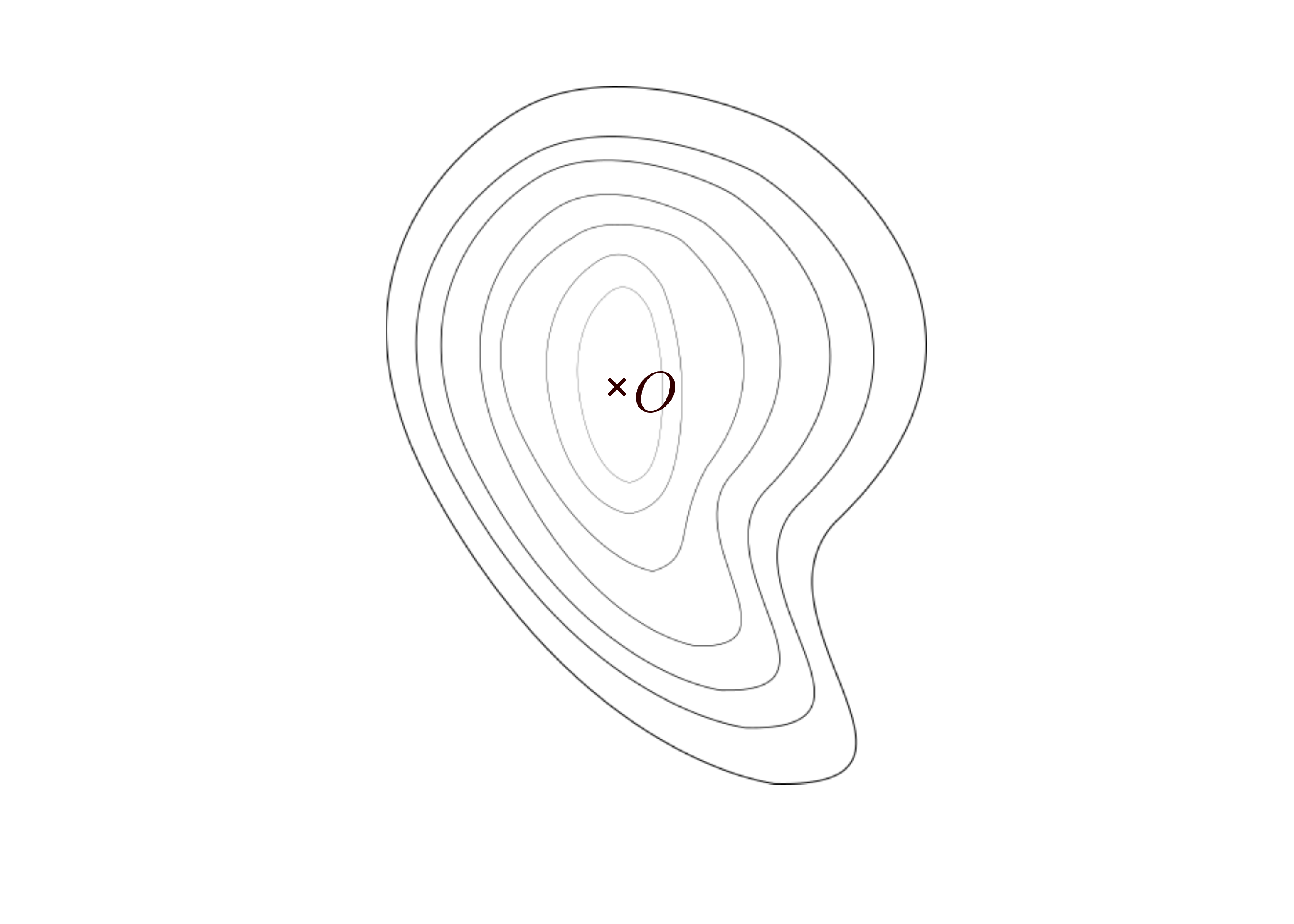} 
\captionof{figure}{Small enough level curves near a Morse strict local minimum become boundaries of convex topological disks.\label{fig:convex}}}
\vspace{10pt}

If the strict local minimum is non-Morse, the small enough level curves may be non-convex. Such an example was given by Coste (see  \cite{So1}, \cite[Example 3.1]{So3}), and some of its level curves near the origin are shown in Figure \ref{fig:contraexCoste} below. Its graph is the surface represented in Figure \ref{fig:contraexCoste3d}.

{\centering\vspace{10pt}
\includegraphics[scale=0.4]{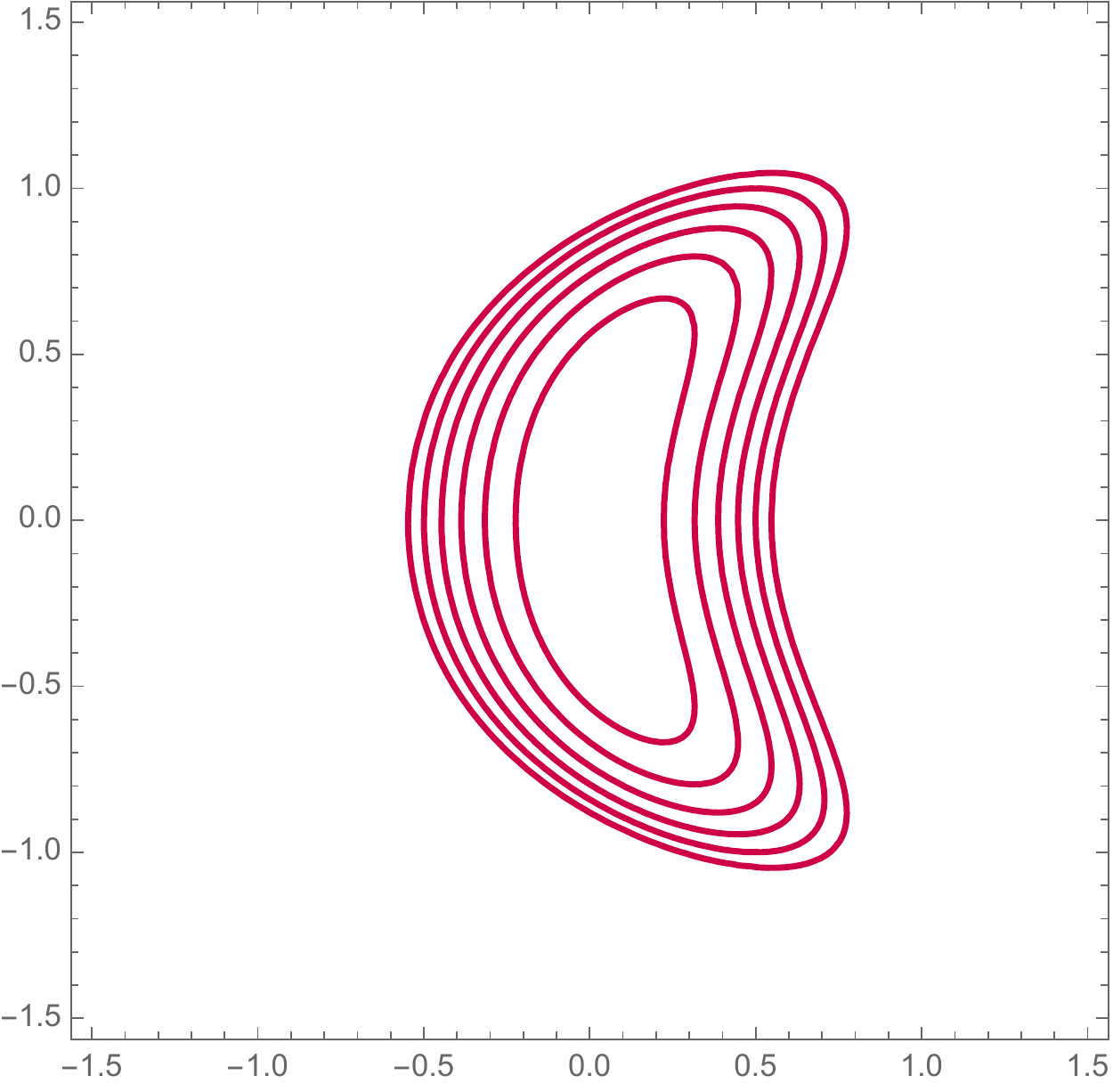} 
\captionof{figure}{Some level curves of Coste's example: $f(x,y)=x^2+(y^2-x)^2$ near the origin.\label{fig:contraexCoste}}}
 \vspace{10pt}

{\centering
\includegraphics[scale=0.17,trim=100 250 100 0, clip]{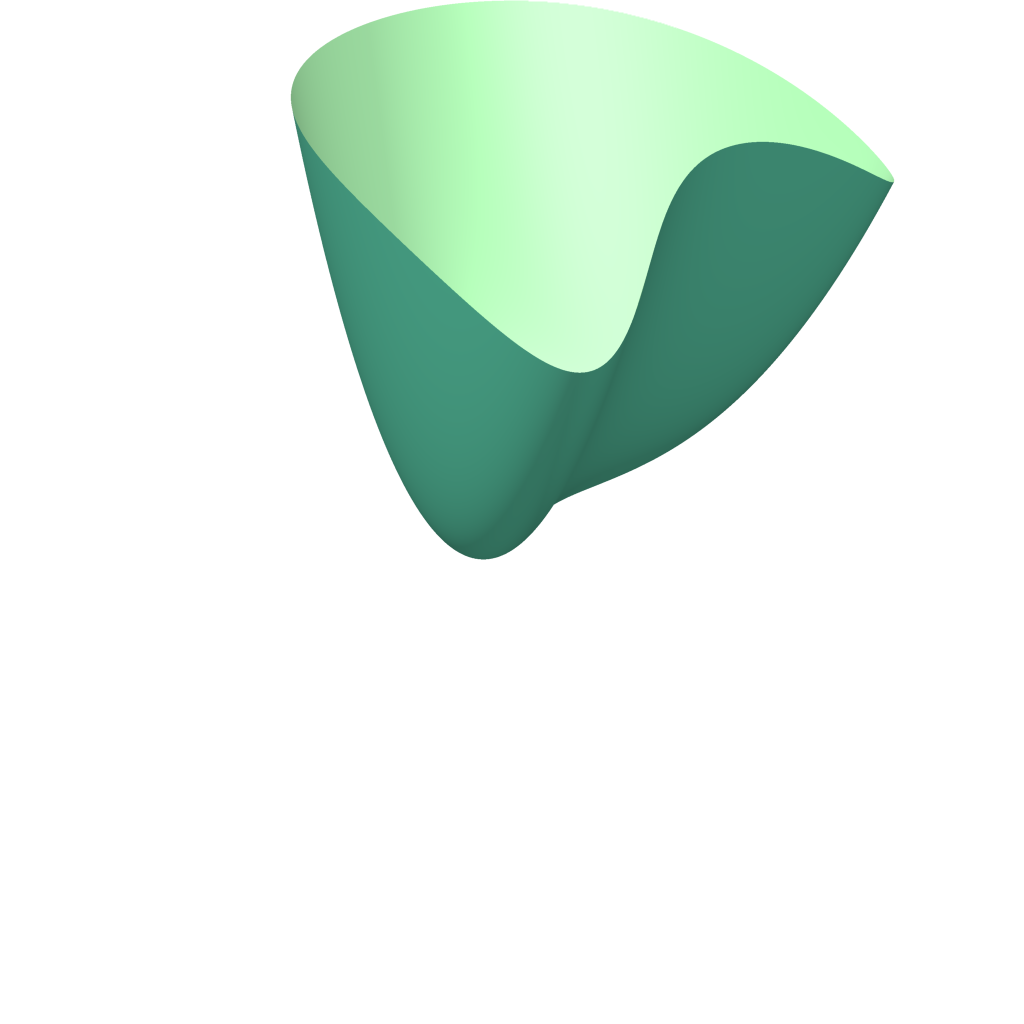} 
\captionof{figure}{The surface $z=x^2+(y^2-x)^2.$\label{fig:contraexCoste3d}}}  
\vspace{10pt}

These phenomena of non-convexity near a strict local minimum were recently studied in \cite[Subsection 4.1]{So3} (see also \cite{So1}). There the author introduced a combinatorial object that encodes the shape of a smooth and compact connected component of a real algebraic plane curve. Its role is to measure how far from being convex a given curve is. This object is inspired from Morse theory and it is called the Poincaré-Reeb tree associated to the curve and to a projection direction, say the vertical direction (see Figure \ref{fig:11Coste} and Figure \ref{fig:1Coste}).

{\centering
\includegraphics[scale=0.2]{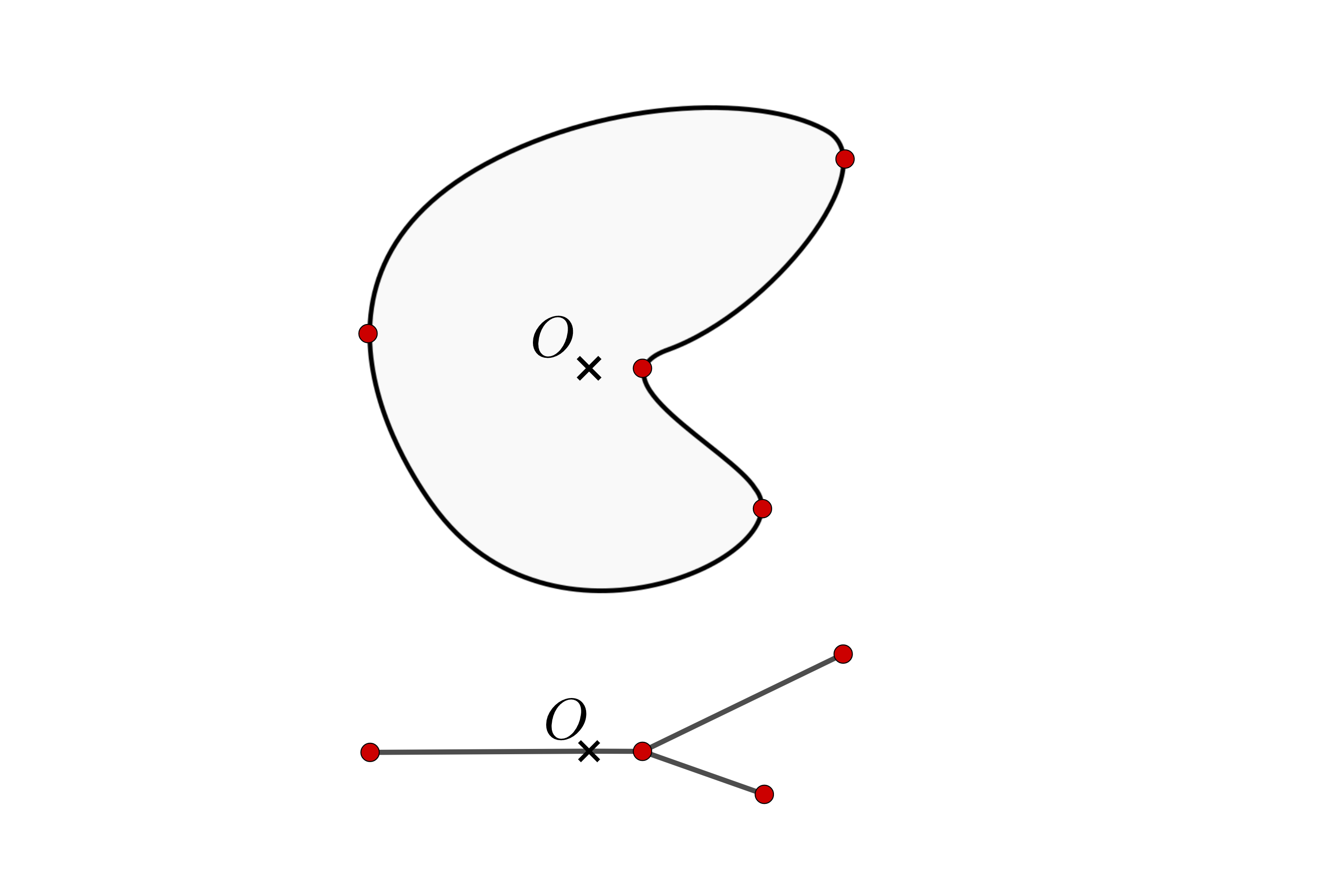} 
\captionof{figure}{A non-convex level curve near a non-Morse strict local minimum at the origin; the Poincaré-Reeb tree associated to it and to the vertical projection direction.\label{fig:11Coste}}}
\vspace{10pt}

An asymptotic study on the Poincaré-Reeb trees of the level curves $\mathcal{C}_\varepsilon$, for $\varepsilon>0$ small enough, near an isolated minimum at the origin was introduced in \cite{So3}. Using the real polar curve, in \cite[Theorem 5.31]{So3} it was shown that the Poincaré-Reeb tree combinatorially stabilises when $\varepsilon>0$ is sufficiently small. This is what we call the asymptotic Poincaré-Reeb tree (\cite[Definition 5.30]{So3}). In addition, its edges do not move backwards towards the origin. Hence no local spiralling phenomena occur (\cite[Theorem 5.31]{So3}).   

For some standard vocabulary related to graphs and trees needed in this paper, the reader may refer to \cite[Subsection 2.1]{So2}.

\subsection{Contributions}
This paper focuses on \emph{the genericity} of the choice of projection direction that is used in the construction of the Poincaré-Reeb tree, and on some combinatorial consequences provided by this genericity assumption. 

By definition, a direction is generic with respect to a curve if it avoids bitangents and inflectional tangencies to the given curve (see Definition \ref{DefGenericDirection}).  If the choice of the direction of projection is generic, then we obtain what we call a generic asymptotic Poincaré-Reeb tree. We prove that besides the properties of an asymptotic Poincaré-Reeb tree, the generic asymptotic Poincaré-Reeb tree acquires special characteristics (see Theorem \ref{mainTh}).

\begin{theorem}[Subsection \ref{subs:genAsPRT}]\label{mainThIntro}
A generic asymptotic Poincaré-Reeb tree is a complete binary tree and its vertices are endowed with a total order relation.
\end{theorem}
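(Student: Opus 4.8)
The plan is to reduce the statement to a Morse-theoretic fact about the restriction of the projection to the Milnor fibre, and then to read the combinatorics off the corresponding Reeb quotient. Choose affine coordinates $(x,y)$ so that the direction $d$ of projection is that of the $y$-axis, and write $\pi\colon\mathbb{R}^2\to\mathbb{R}$, $\pi(x,y)=x$. Fix $\varepsilon>0$ small enough that the asymptotic Poincaré-Reeb tree is already realized by $\mathcal{C}_\varepsilon$ (possible by \cite[Theorem 5.31]{So3}) and that $d$ is generic with respect to $\mathcal{C}_\varepsilon$. Recall that the Poincaré-Reeb tree is the quotient of the disk $\mathcal{D}_\varepsilon$ by the relation ``to lie in the same connected component of a fibre of $\pi$''; since $\mathcal{D}_\varepsilon$ is a topological disk (\cite[Lemma 5.3]{So3}) this quotient is a finite tree. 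Because $\pi$ has no critical point in the interior of $\mathbb{R}^2$, the fibres of $\pi$ can change topological type only along $\mathcal{C}_\varepsilon$, so the vertices of the tree are exactly the connected components of fibres that contain a critical point of $\pi|_{\mathcal{C}_\varepsilon}$, i.e.\ a point of $\mathcal{C}_\varepsilon$ with tangent parallel to $d$.

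The first real step is to translate genericity. Avoiding the inflectional tangencies in the direction $d$ means that at each point of $\mathcal{C}_\varepsilon$ with tangent parallel to $d$ the curve is locally strictly convex; equivalently every critical point of $\pi|_{\mathcal{C}_\varepsilon}$ is nondegenerate, so $\pi|_{\mathcal{C}_\varepsilon}$ is a Morse function on the circle $\mathcal{C}_\varepsilon$ and has an even number $2k$ of critical points. Avoiding the bitangents in the direction $d$ means that no line parallel to $d$ is tangent to $\mathcal{C}_\varepsilon$ at two distinct points; equivalently the $2k$ critical values of $\pi|_{\mathcal{C}_\varepsilon}$ are pairwise distinct, so each critical fibre of $\pi$ contains exactly one critical point. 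The total order of the statement now comes for free: if $v_1,\dots,v_{2k}$ are the vertices of the tree and $p_1,\dots,p_{2k}$ the associated critical points, the rule $v_i\prec v_j\iff\pi(p_i)<\pi(p_j)$ is a well-defined linear order on the vertex set, and by the stabilization result \cite[Theorem 5.31]{So3} it does not depend on the choice of small $\varepsilon$. It is exactly here that genericity is indispensable: for a non-generic direction two critical points could have the same $\pi$-value, and no such order would exist.

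It remains to identify the tree. Fix a critical point $p=(x_0,y_0)$ of $\pi|_{\mathcal{C}_\varepsilon}$. Since $d$ avoids the inflectional tangencies, near $p$ the curve is the graph $x=x_0+b(y-y_0)^2+o\bigl((y-y_0)^2\bigr)$ with $b\neq0$, and $\mathcal{D}_\varepsilon$ lies locally on a definite one of the two sides of this arc. If $\mathcal{D}_\varepsilon$ lies on the concave side, the component of $\pi^{-1}(c)\cap\mathcal{D}_\varepsilon$ near $p$ is a single segment that shrinks to $p$ as $c\to x_0$ from one side and is empty on the other side, so $p$ contributes a vertex of valence $1$ (a leaf). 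If $\mathcal{D}_\varepsilon$ lies on the convex side, then on one side of $x_0$ this local component is a single segment and on the other side it splits into two, so $p$ contributes a vertex of valence $3$. No other valence can occur: a vertex of valence $\geq4$ would require either two critical points on one fibre (excluded by the absence of bitangents) or a degenerate critical point (excluded by the absence of inflectional tangencies), and no critical point can fail to be a vertex, so there is no valence $2$. Counting with $2k$ vertices and $2k-1$ edges then gives $k+1$ leaves and $k-1$ trivalent vertices; since the Poincaré-Reeb tree carries a distinguished root from its construction, the root has one child and every other internal vertex has exactly two children, which is precisely the assertion that the tree is a complete binary tree.

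I expect the local analysis of the third paragraph to be the main obstacle. One has to verify that the two cases there are exhaustive and mutually exclusive, that the critical fibre of $\pi$ — which may well meet $\mathcal{C}_\varepsilon$ in other, regular points sharing the same $\pi$-value — still contributes a single vertex whose valence is governed only by the behaviour of $\mathcal{D}_\varepsilon$ near $p$, and that for the real algebraic curves in play ``inflectional tangency'' and ``bitangent'' correspond, respectively, to a degenerate critical point and to a repeated critical value of $\pi|_{\mathcal{C}_\varepsilon}$, so that genericity is equivalent to $\pi|_{\mathcal{C}_\varepsilon}$ being a Morse function with pairwise distinct critical values. Once this dictionary is set up, both conclusions — complete binary tree and total order on the vertices — follow formally, and the only remaining care is to check, via \cite[Theorem 5.31]{So3}, that none of the resulting combinatorial data depends on the choice of the small parameter $\varepsilon$.
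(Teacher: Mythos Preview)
Your approach is correct and, at its core, is the same geometric argument as the paper's: both analyse the local behaviour of the fibres of $\pi$ near each point of vertical tangency on $\mathcal{C}_\varepsilon$ and read the valency off that picture, and both obtain the total order from the pairwise distinctness of the critical values.

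The packaging differs. The paper does not argue directly from Definition~\ref{DefGenericDirection}; instead it first reformulates genericity through the polar curve $\Gamma(f,x)$ and the map $\phi(x,y)=(x,f(x,y))$, showing (Propositions~\ref{prop:red} and~\ref{prop:homeo}) that ``$\Gamma$ reduced'' rules out vertical inflectional tangents and ``$\phi|_\Gamma$ a homeomorphism onto its image'' rules out vertical bitangents, and then deduces the valency constraints (Propositions~\ref{cor:redus} and~\ref{cor:homeo}). Your direct Morse-theoretic route is shorter for the statement as written, while the paper's detour buys an effectively checkable criterion on $f$ that it exploits later (Theorem~\ref{mainTh} and Section~\ref{sect:snakesCrestsValleys}).

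Two small corrections to your bookkeeping. First, in the paper's construction the root is the image of the \emph{origin}, which lies in the interior of $\mathcal{D}_\varepsilon$ and is not a critical point of $\pi|_{\mathcal{C}_\varepsilon}$; it is an additional vertex of valency $2$ (one child on each side $x>0$, $x<0$), so your count should read $2k+1$ vertices and ``the root has two children'', not one (cf.\ Proposition~\ref{cor:redus}: ``no vertices of valency two, \emph{except for the root}''). Second, a degenerate critical point (inflectional tangency) yields a vertex of valency $2$, not $\geq 4$; valency $\geq 4$ arises only from a bitangent. Neither point affects the substance of your argument.
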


It is well-known that any algebraic curve has only finitely many non-generic directions of projection. After presenting a brief proof of this classical result (Theorem \ref{prop:FiniteNonGeneric}), we show that for asymptotic families of level curves near a strict local minimum of a real bivariate polynomial there are finitely many intervals on the real projective line outside of which all the projection directions are generic with respect to all the curves in the family:

\begin{theorem}[Theorem \ref{th:almostallgeneric}]\label{th:almostallgenericIntro}
There exists a real number $\nu>0$, and there are finitely many intervals $U_i\subset \mathbb{RP}^1$, of length $\nu$, such that for any $0<\varepsilon\ll \nu,$ all the projection directions of $\mathbb{RP}^1\setminus \bigcup_{i}U_i$ are generic (as in Definition \ref{DefGenericDirection}). 
\end{theorem}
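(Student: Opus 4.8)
The plan is to regard $\varepsilon$ as a parameter and to combine the fibrewise finiteness of Theorem~\ref{prop:FiniteNonGeneric} with the tameness of semialgebraic families. Fix $\varepsilon_0>0$ small enough that for every $\varepsilon\in(0,\varepsilon_0)$ the curve $\mathcal{C}_\varepsilon$ is a smooth Jordan curve contained in a fixed ball $B$ about $O$ (by \cite[Lemma 5.3]{So3}), and consider
$$\mathcal{N}:=\{(\theta,\varepsilon)\in\mathbb{RP}^1\times(0,\varepsilon_0)\ :\ \theta\ \text{is non-generic with respect to}\ \mathcal{C}_\varepsilon\}.$$
The first task is to show that $\mathcal{N}$ is semialgebraic; this is just the proof of Theorem~\ref{prop:FiniteNonGeneric} carried out uniformly in the extra variable $\varepsilon$. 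A direction $v_\theta=(\cos\theta,\sin\theta)$ is non-generic for $\mathcal{C}_\varepsilon$ exactly when the restriction to $\mathcal{C}_\varepsilon$ of the projection along $v_\theta$ has either a degenerate critical point (an inflectional tangency) or two distinct critical points with the same value (a bitangent). The critical points lie on the polar curve $\{f_x\cos\theta+f_y\sin\theta=0\}$; degeneracy there is the vanishing of an appropriate Hessian-type determinant (which for a level curve of $f$ is the $\varepsilon$-independent inflection condition); and the bitangent condition is the collinearity $(p_1-p_2)\cdot(-\sin\theta,\cos\theta)=0$. All of these are polynomial in $(x,y)$ with coefficients polynomial in $(\cos\theta,\sin\theta)$, so intersecting with the semialgebraic condition $p\in\{f=\varepsilon\}\cap B$ and projecting away the auxiliary points $p$, respectively $p_1,p_2$ (Tarski--Seidenberg), exhibits $\mathcal{N}$ as semialgebraic.

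Each fibre $\mathcal{N}_\varepsilon=\{\theta:(\theta,\varepsilon)\in\mathcal{N}\}$ is finite, by Theorem~\ref{prop:FiniteNonGeneric} applied to the algebraic curve $\{f=\varepsilon\}$. Applying the cell decomposition theorem to $\mathcal{N}$ compatibly with the projection onto the second factor, and keeping the open interval $(0,\varepsilon_1)\subset(0,\varepsilon_0)$ having $0$ in its closure, we may write $\mathcal{N}\cap\big(\mathbb{RP}^1\times(0,\varepsilon_1)\big)=\bigcup_{j=1}^{N}\graph(\phi_j)$ for finitely many continuous semialgebraic functions $\phi_j\colon(0,\varepsilon_1)\to\mathbb{RP}^1$ — no two-dimensional cells occur, since those would give infinite fibres. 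A one-variable semialgebraic function into the compact space $\mathbb{RP}^1$ is eventually monotone (in a semialgebraic chart), hence cannot spiral and has a one-sided limit $\theta_j^{*}:=\lim_{\varepsilon\to 0^{+}}\phi_j(\varepsilon)\in\mathbb{RP}^1$; this is, for the non-generic directions, the exact analogue of the non-spiralling established in \cite[Theorem 5.31]{So3}. Hence $\Theta:=\{\theta_1^{*},\dots,\theta_N^{*}\}\subset\mathbb{RP}^1$ is a finite set.

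Now choose $\nu>0$ small enough that the open intervals of length $\nu$ centered at the points of $\Theta$ do not cover $\mathbb{RP}^1$ (possible since $\Theta$ is finite), and let $U_1,\dots,U_m$ be these intervals. Since $\phi_j(\varepsilon)\to\theta_j^{*}\in\bigcup_iU_i$ as $\varepsilon\to 0^{+}$, there is $\delta=\delta(\nu)>0$ with $\phi_j(\varepsilon)\in\bigcup_iU_i$ for all $j$ whenever $0<\varepsilon<\delta$; that is, $\mathcal{N}_\varepsilon\subset\bigcup_iU_i$ for $0<\varepsilon\ll\nu$. Equivalently, every direction of $\mathbb{RP}^1\setminus\bigcup_iU_i$ is generic with respect to $\mathcal{C}_\varepsilon$, which is the assertion.

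I expect the real obstacle to be the first step, namely the honest semialgebraic description of $\mathcal{N}$. Two points need care: (i) the bitangency condition is not local on $\mathcal{C}_\varepsilon$, so it must be produced as the projection of a semialgebraic set in the extra point variables $p_1,p_2$; and (ii) one must ensure that throughout only the component $\mathcal{C}_\varepsilon$ of the level set $\{f=\varepsilon\}$ is seen, and not the remainder of that set — this is why one fixes the ball $B$ at the outset and invokes the local description of the real Milnor fibre from \cite{So3}. Once $\mathcal{N}$ is known to be semialgebraic with finite fibres, the remainder is routine o-minimal tameness (cell decomposition and the monotonicity theorem); in particular the finiteness of $\Theta$ and the absence of spiralling are then automatic.
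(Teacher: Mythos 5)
Your proof is correct and reaches the conclusion by a genuinely different route than the paper. You work directly in the parameter space $\mathbb{RP}^1\times(0,\varepsilon_0)$: once the non-generic locus $\mathcal{N}$ is shown to be semialgebraic (Tarski--Seidenberg applied to the tangency and bitangency conditions, uniformly in $\varepsilon$), with fibres over $\varepsilon$ finite by Theorem~\ref{prop:FiniteNonGeneric}, cell decomposition presents $\mathcal{N}$ near $\varepsilon=0$ as finitely many continuous semialgebraic graphs $\phi_j$ over the $\varepsilon$-axis, and the monotonicity theorem furnishes the one-sided limits $\theta_j^*$. The paper instead works in the source plane $\mathbb{R}^2$: it introduces the semialgebraic loci $\mathcal{H}$ (inflection points of level curves, cut out by a Hessian-type determinant) and $\mathcal{B}$ (feet of bitangents to level curves), shows each has dimension at most $1$, notes that $\mathcal{H}\cup\mathcal{B}$ then has finitely many half-branches $\gamma_i$ at the origin, pushes these forward by the gradient map $\Phi=(\partial f/\partial x,\partial f/\partial y)$, and uses the existence of $\lim_{t\to 0}(\partial f/\partial y)(\gamma_i(t))/(\partial f/\partial x)(\gamma_i(t))$ (again the non-spiralling of semialgebraic half-branches) to produce finitely many accumulation directions $[p_i:1]\in\mathbb{RP}^1$. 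Both arguments rest on the same two pillars --- Tarski--Seidenberg and the absence of spiralling of one-dimensional semialgebraic objects --- but you package them abstractly via cell decomposition plus the monotonicity theorem in direction-times-level space, whereas the paper extracts the accumulation set geometrically as tangent directions at the origin of the gradient images of the half-branches of the bad-point loci. Your route is more automatic and requires no explicit description of $\mathcal{H}$ and $\mathcal{B}$ nor identification of the gradient map as the right correspondence; the paper's route locates the limiting intervals $U_i$ concretely in terms of the plane geometry of $f$, and for that reason sits more naturally with the polar-curve and half-branch machinery used elsewhere in the paper.
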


Moreover, in Proposition \ref{prop:red} and Proposition \ref{prop:homeo}, we express the conditions for the vertical projection to be a generic projection for a given level curve, in terms of the polar curve and the discriminant locus of a certain polynomial map. This allows us to obtain the following reformulation of Theorem \ref{mainThIntro} from above:

\begin{theorem}[Theorem \ref{mainTh}]\label{mainTh:intro}
Let us consider the polar curve of $f$ with respect to the vertical projection $x$, that is $\Gamma(f,x):=\big\{(x,y)\in\mathbb{R}^2\mid \frac{\partial f}{ \partial y}(x,y)=0\big\}$, and the map $\phi:\mathbb{R}^2_{x,y}\rightarrow\mathbb{R}^2_{x,z}$, given by $\phi(x,y):=(x,f(x,y)).$

If the following two hypotheses are satisfied:

(a) the polar curve is reduced,

(b) he restriction of the map $\phi$ to the polar curve is a homeomorphism on the image $\phi(\Gamma)$,

\noindent then the generic asymptotic Poincaré-Reeb tree of $f$ relative to $x$ is a complete binary tree (i.e. every internal vertex has exactly two children), such that the preorder defined by $x$ on its vertices is a total order which is strictly monotone on the geodesics starting from the root. 
\end{theorem}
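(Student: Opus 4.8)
The plan is to deduce this from the already-established structural statement of Theorem \ref{mainThIntro} (i.e. Theorem \ref{mainTh} in the abstract numbering), by showing that hypotheses (a) and (b) are exactly what is needed to guarantee that the vertical projection $x$ is generic for all small enough level curves $\mathcal{C}_\varepsilon$ in the asymptotic family, in the sense of Definition \ref{DefGenericDirection}. Once genericity is in hand, Theorem \ref{mainThIntro} gives the complete binary tree structure and the total order on vertices; the extra claim to be checked is that this total order is the preorder induced by $x$ and that it restricts to a strictly monotone order along each geodesic from the root. So the proof splits naturally into two halves: first, (a)+(b) $\Rightarrow$ genericity of $x$; second, genericity $\Rightarrow$ the monotonicity-along-geodesics refinement of Theorem \ref{mainThIntro}.

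For the first half I would argue as follows. A direction fails to be generic for $\mathcal{C}_\varepsilon$ precisely when it is the direction of a bitangent line or of an inflectional tangent of $\mathcal{C}_\varepsilon$. The tangency points of vertical lines with $\mathcal{C}_\varepsilon$ are exactly the points of $\mathcal{C}_\varepsilon \cap \Gamma(f,x)$, since $\partial f/\partial y = 0$ there. Under the map $\phi(x,y) = (x,f(x,y))$, a vertical line $\{x = a\}$ maps into the vertical line $\{x=a\}$ in the target, and a point of $\Gamma$ on $\mathcal{C}_\varepsilon$ maps to a point on $\phi(\Gamma)$ with second coordinate $z = f(x,y) = \varepsilon$. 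Thus: two distinct tangency points of the \emph{same} vertical line with $\mathcal{C}_\varepsilon$ (a bitangency in that direction) would give two distinct points of $\Gamma$ with the same $x$ and the same $z=\varepsilon$ — i.e. two points of $\Gamma$ with the same $\phi$-image — which is forbidden by hypothesis (b) (injectivity of $\phi|_\Gamma$). Similarly, an inflectional vertical tangency corresponds to a point where the vertical line meets $\mathcal{C}_\varepsilon$ with contact of order $\geq 3$, which translates (using that $\Gamma$ is reduced, hypothesis (a), so it has no embedded components and the local structure of $\phi|_\Gamma$ is controlled) into $\phi|_\Gamma$ failing to be a local homeomorphism at that point, i.e. $\phi(\Gamma)$ acquiring a singular point or a vertical cusp there — again excluded by (b). I would make this precise by invoking Proposition \ref{prop:red} and Proposition \ref{prop:homeo}, which the excerpt says are exactly the translations of ``vertical projection is generic'' into conditions on $\Gamma$ and on the discriminant of $\phi$; so really this half of the proof amounts to citing those two propositions and noting they hold uniformly for $0 < \varepsilon \ll 1$ by the stabilization result \cite[Theorem 5.31]{So3}.

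For the second half, the total order on the vertices of the generic asymptotic Poincaré-Reeb tree in Theorem \ref{mainThIntro} is built from the values of $x$ at the critical points of $x|_{\mathcal{C}_\varepsilon}$, which are the images of the vertices; genericity forces these critical values to be pairwise distinct (no two on the same vertical line — this is again the bitangent/inflection exclusion), so the preorder defined by $x$ is already a total order, and it agrees with the order from Theorem \ref{mainThIntro}. Strict monotonicity along a geodesic from the root then follows from the fact, recorded in \cite[Theorem 5.31]{So3}, that the edges of the asymptotic tree do not move backwards towards the origin (no spiralling): travelling along a geodesic away from the root, one passes through a strictly monotone sequence of critical levels of $x$. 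The main obstacle I expect is the first half — specifically, making the dictionary precise between ``contact order $\geq 3$ of a vertical line with $\mathcal{C}_\varepsilon$'' (inflectional tangency) and the local behaviour of $\phi|_\Gamma$, because this requires using the reducedness hypothesis (a) to rule out pathological local pictures of $\Gamma$ (e.g. a point of $\Gamma$ of higher multiplicity where $\phi$ could still be injective set-theoretically but not a homeomorphism onto a smooth arc), and checking that everything is stable as $\varepsilon \to 0$. Granting Propositions \ref{prop:red} and \ref{prop:homeo} as the excerpt permits, this obstacle is already absorbed into those statements, and the proof reduces to assembling them with Theorem \ref{mainThIntro} and the no-spiralling property.
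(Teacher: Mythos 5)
Your proposal is essentially the paper's own argument, with one framing issue to flag. The paper proves Theorem~\ref{mainTh} (of which Theorem~\ref{mainTh:intro} is the introduction's restatement) by invoking Proposition~\ref{cor:redus} and Proposition~\ref{cor:homeo} — the Poincaré-Reeb-tree consequences of the geometric Propositions~\ref{prop:red} and~\ref{prop:homeo} you cite — to conclude that every internal vertex has valency exactly~$3$, hence the tree is complete binary; the total preorder of \cite[Corollary 4.21]{So3} is then promoted to a total order because genericity rules out two tangency points on the same vertical line, and the strict monotonicity along geodesics is inherited from the no-spiralling property of \cite[Theorem 5.31]{So3}, exactly as you describe. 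The thing to correct is your opening move: Theorem~\ref{mainThIntro} is not an independently established result from which Theorem~\ref{mainTh:intro} can be deduced — it is the abstract's abbreviated restatement of the very same theorem (both reference the single proof in Subsection~\ref{subs:genAsPRT}), so ``deduce \ref{mainTh:intro} from \ref{mainThIntro}'' is circular. This does not sink the proposal, because you also identify the genuine machinery (Propositions~\ref{prop:red} and~\ref{prop:homeo} together with the stabilization and no-spiralling results of \cite{So3}); once the appeal to Theorem~\ref{mainThIntro} is replaced by a direct appeal to Propositions~\ref{cor:redus} and~\ref{cor:homeo} (or to the underlying Propositions~\ref{prop:red} and~\ref{prop:homeo} plus the Poincaré-Reeb construction), your argument matches the paper's proof.
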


The choice of a generic projection will allow us to associate new combinatorial objects to the level curves: we will encode the shapes via a special class of permutations. In particular, as an application of Theorem \ref{mainTh:intro}, we show the following result:

\begin{theorem}[Theorem \ref{th:snakesBivar}]
Under the genericity hypotheses (a) and (b) from Theorem \ref{mainTh:intro} above, the local shape of level curves of a bivariate polynomial near a strict local minimum can be encoded in terms of 
alternating permutations, called snakes.
\end{theorem}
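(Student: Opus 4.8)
The plan is to combine the structural result of Theorem~\ref{mainTh:intro} with the elementary combinatorics of complete binary trees carrying a total order that is strictly monotone along root-geodesics. By Theorem~\ref{mainTh:intro}, under hypotheses (a) and (b) the generic asymptotic Poincaré-Reeb tree $T$ of $f$ relative to $x$ is a complete binary tree whose vertex set is totally ordered by the preorder induced by $x$, this order being strictly monotone on every geodesic issuing from the root. First I would make precise the passage from the abstract tree $T$ to a permutation: the total order on the vertices of $T$ provides a labeling of its $n$ vertices by $\{1,\dots,n\}$, and reading these labels off in a canonical traversal of the tree (say, the left-to-right order in which the internal vertices and leaves meet a horizontal sweep line, i.e. the order in which the arcs of $\mathcal{C}_\varepsilon$ appear along the projection axis) yields a sequence $\sigma(1),\dots,\sigma(n)$, which I claim is a permutation of $\{1,\dots,n\}$.

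Next I would verify that $\sigma$ is an alternating (zigzag) permutation, i.e. a \emph{snake}. The key point is a local shape analysis: at a vertex of the Poincaré-Reeb tree that is a local extremum of $x$ restricted to the curve, the two arcs adjacent to it lie on the same side in the $x$-direction, whereas the strict monotonicity of the order along geodesics forces consecutive values of $\sigma$ to switch between ``ascending'' and ``descending'' as one crosses each such extremum. Because the curve $\mathcal{C}_\varepsilon$ is a smooth Jordan curve bounding the topological disk $\mathcal{D}_\varepsilon$ and, the projection being generic, every critical point of $x|_{\mathcal{C}_\varepsilon}$ is a nondegenerate extremum (no inflectional tangencies) with distinct critical values (no bitangents), the local maxima and local minima of $x|_{\mathcal{C}_\varepsilon}$ alternate along the curve; translating this alternation through the labeling gives the defining inequalities $\sigma(1)<\sigma(2)>\sigma(3)<\cdots$ (or the reversed pattern), which is exactly the snake condition. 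I would also record that the completeness of the binary tree (every internal vertex has exactly two children) is what guarantees there are no ``flat'' stretches that would break the strict alternation.

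Then I would assemble these observations into the statement: to each small enough level curve $\mathcal{C}_\varepsilon$ near the strict local minimum we have functorially attached a snake $\sigma$, and conversely the snake determines the generic asymptotic Poincaré-Reeb tree (hence the local contact-type of the family of level curves) up to the natural equivalence, because a complete binary tree with a monotone total order along root-geodesics can be reconstructed from the alternating sequence of its vertex labels. Finally I would remark on well-definedness: by \cite[Theorem 5.31]{So3} the Poincaré-Reeb tree stabilises for $0<\varepsilon\ll 1$, so $\sigma$ does not depend on the choice of sufficiently small $\varepsilon$, and by Theorem~\ref{th:almostallgenericIntro} the hypothesis that $x$ is generic is satisfied for all but finitely many intervals of directions, so the construction applies to a ``generic'' choice of projection.

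The main obstacle I anticipate is the second step — rigorously establishing the alternation property. One must carefully set up the correspondence between (i) the cyclic order of arcs of $\mathcal{C}_\varepsilon$ along the sweep direction, (ii) the tree structure of $T$ with its root (corresponding to the arc through which one ``enters'' near the origin), and (iii) the induced total order on vertices; and then show that ``strictly monotone along every root-geodesic'' together with ``complete binary'' is equivalent to the zigzag inequalities on the traversal sequence. This is essentially a bijection-with-snakes argument in the spirit of the classical correspondence between increasing binary trees and alternating permutations, but it must be checked that the order coming from $x$ matches the combinatorial order used in that classical bijection; the genericity hypotheses (a), (b) are exactly what is needed to rule out the degenerate configurations (tangencies of higher order, coincident critical values) that would otherwise spoil the bijection.
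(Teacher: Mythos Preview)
Your outline is broadly on the right track but takes a different route from the paper and is imprecise at the crucial step of defining the permutation.

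The paper does not work through the abstract combinatorics of the Poincar\'e--Reeb tree or the classical bijection with increasing binary trees. Instead it works directly with the polar curve $\Gamma(f,x)$ in the half-plane $x>0$. It introduces the notions of \emph{crest} and \emph{valley}: a point where a right polar half-branch meets $\mathcal{C}_\varepsilon$ is a right-crest or a right-valley according as $\mathcal{C}_\varepsilon$ lies locally to the right or to the left of the vertical tangent there. The two total orders on this finite set of points are (i) the trigonometric order of the polar half-branches (equivalently, the order in which the points are met when traversing the curve), and (ii) the order by $x$-coordinate; comparing them as a biordered set in the sense of Knuth yields the permutation $\sigma$. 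Alternation is then proved geometrically (Proposition~\ref{prop:alternating}): assuming two consecutive half-branches gave, say, two valleys, one manufactures via the intermediate value theorem an arc of $\mathcal{C}_\varepsilon$ between them, and Rolle's theorem applied to $y\mapsto f(x_0,y)$ forces an additional polar half-branch in between, contradicting consecutiveness. A separate argument (Lemma~\ref{lemma:lastValley}) shows the first and last half-branches give valleys, pinning down the type $\sigma(1)<\sigma(2)>\cdots>\sigma(n)$. The genericity hypotheses enter only to guarantee that order~(ii) is strict (no bitangents) and that every tangency is of order two (reduced polar curve), so that each polar intersection is a genuine crest or valley.

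Your approach packages the geometry into the tree first and then extracts the snake combinatorially. This can be made to work, but your description of the permutation is muddled: you label vertices by the $x$-order and then say you traverse them ``in the left-to-right order in which \ldots\ meet a horizontal sweep line, i.e.\ the order in which the arcs of $\mathcal{C}_\varepsilon$ appear along the projection axis'', which is again the $x$-order---so as written you would recover the identity permutation. The two orders you need are the \emph{order along the curve} (the plane order of the polar half-branches, which for the tree is the boundary/contour order of the leaves and internal vertices) and the $x$-order; these are genuinely different. Once this is corrected, your alternation argument (local maxima and minima of $x|_{\mathcal{C}_\varepsilon}$ alternate along a smooth Jordan curve with only nondegenerate critical points) is valid and arguably more conceptual than the paper's Rolle-based proof; but you must also restrict to one side of the root, since the full curve gives a cyclic rather than a linear sequence, and you should drop the converse ``the snake determines the tree'' claim, which the theorem does not assert and which you have not argued.
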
 

See Theorem \ref{th:snakesBivar} for a more precise formulation. These special permutations turn out to be alternating permutations, that we call \emph{snakes} (see Figure \ref{fig:snakeRightIntro}). To this end, we study the local behaviour of the branches of the real polar curve in the neighbourhood of the origin, via the notions of \emph{crest} and \emph{valley}, inspired from \cite{CP}, where these were used to study atypical values at infinity. We give detailed proofs for all geometric phenomena we emphasize. 

{\centering
\includegraphics[scale=0.2]{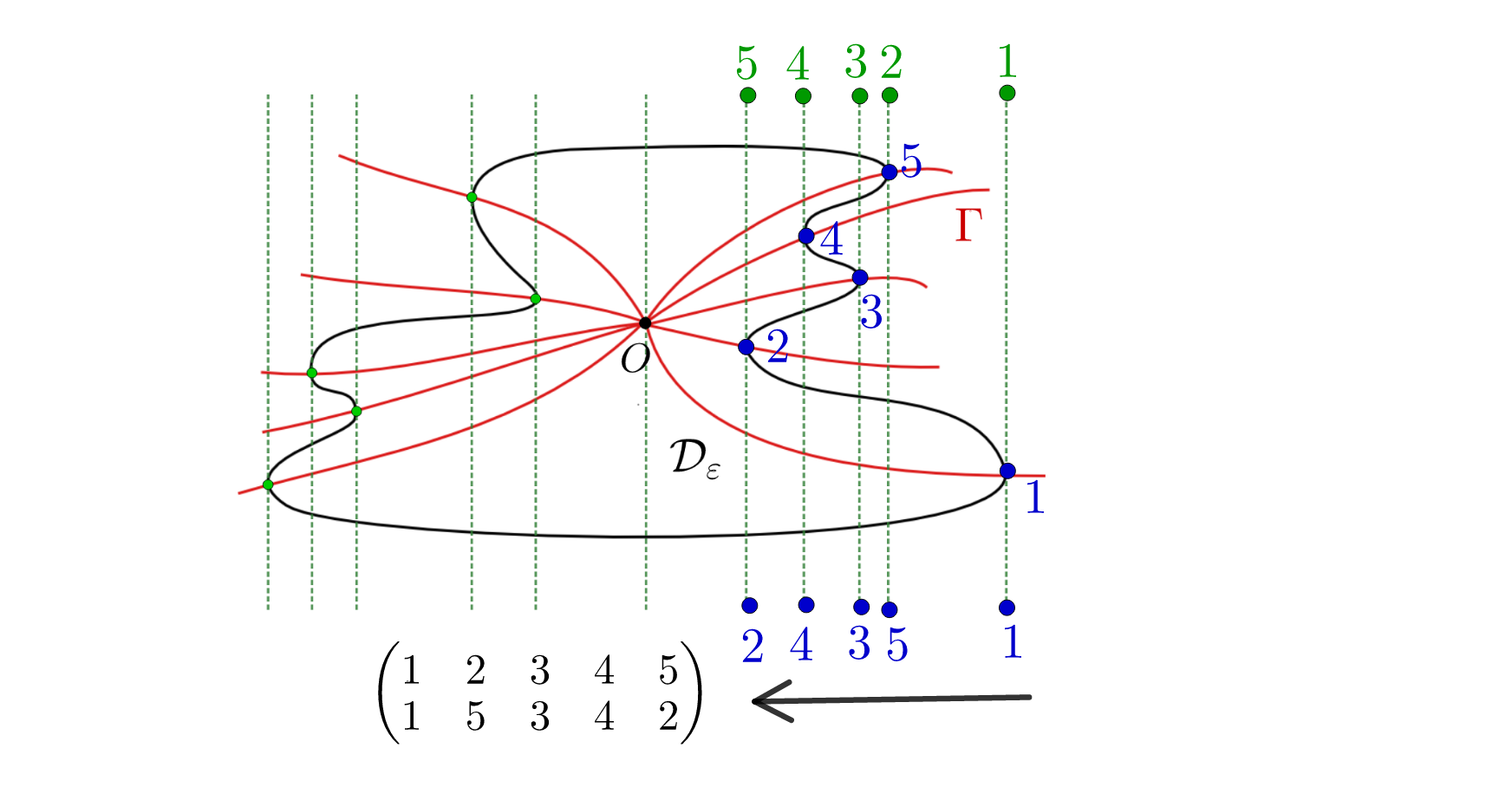} 
\captionof{figure}{A snake to the right (i.e. for $x>0$) associated to a smooth, compact, connected component of a real algebraic plane curve, near a strict local minimum.\label{fig:snakeRightIntro}}}
\vspace{10pt}

\subsection{Structure of the paper}
In Section \ref{SectionGenericDir} we define the notion of generic direction for an algebraic curve and we deduce that there are finitely many non-generic directions with respect to the curve. This is done via the classical study of the dual curve. Next, we focus our attention on the genericity of projections, in the case of asymptotic families of level curves of real bivariate polynomials near a strict local minimum. The end of Section \ref{SectionGenericDir} is dedicated to a geometric interpretation of the genericity hypotheses on the direction, in terms of the polar curve and the discriminant curve of a certain polynomial map. In Section \ref{sect:genAsyPRtrees} we present the generic properties of asymptotic Poincaré-Reeb trees. A new combinatorial interpretation of generic asymptotic Poincaré-Reeb trees, in terms of alternating permutations, called snakes, is given in Section \ref{sect:snakesCrestsValleys}. To this end, we carry a topological and combinatorial study of crests and valleys of real algebraic curves, illustrating how the branches of the real polar curve control the shape near a strict local minimum. 

\section*{Acknowledgements}
The results presented in this work constitute a part of my PhD thesis (see \cite{So1}), defended at \href{http://math.univ-lille1.fr/}{\textit{Laboratoire Paul Painlevé}}, \href{https://www.univ-lille.fr/}{\textit{Université de Lille, France}} and financially supported by \href{http://math.univ-lille1.fr/~cempi/}{\textit{Labex CEMPI}} (ANR-11-LABX-0007-01) and by \href{http://www.hautsdefrance.fr/}{\textit{Région Hauts-de-France}}. I am thankful to my PhD advisors, \href{http://math.univ-lille1.fr/~bodin/index.html}{Arnaud Bodin} and \href{http://math.univ-lille1.fr/~popescu/}{Patrick Popescu-Pampu}, who guided me during this work. I express my gratitude towards \href{http://ergarcia.webs.ull.es/}{Evelia R. Garc\' ia Barroso} and \href{https://webusers.imj-prg.fr/~ilia.itenberg/}{Ilia Itenberg} for reviewing my PhD thesis and towards \href{http://perso.ens-lyon.fr/ghys/accueil/}{\' Etienne Ghys} for being the president of the thesis committee.

\section{Generic directions of projection}\label{SectionGenericDir}
Generic projections are a subject of strong interest in real algebraic geometry (see \cite[Section 11.6]{RoyAlgRAG}) and in computational aspects of the topology of real algebraic varieties (see \cite{CLPPRT}). The problem of bitangents and inflections of algebraic curves is classical (see \cite{MR2856401} for Arthur Cayley's works). Recent progress in this subject has been made for instance by Brugallé, Itenberg, Sturmfels (see for example \cite{Bru1}, \cite{II2}, and \cite{Stu1}). 

\subsection{Choosing the direction of a generic projection for one level curve}\label{subSectionGenericDir}
The following definition is inspired from \cite[Definition 5.28]{So3}.

\begin{definition}\label{def:GoodNeighb}
Let $f:\bR^2\rightarrow \bR$ be a polynomial function such that $f(0,0)=0$. A small enough neighbourhood $V$ of $(0,0)$ such that $f$ has no other strict local minima in $V$, except from the origin and such that $V\cap (f=0)=\{(0,0)\}$, is called \defi{a good neighbourhood} of the origin.
\end{definition}

\begin{definition}\cite[Definition 1.1]{So3}\label{def:levelCEpsilon}
Let $f:\mathbb{R}^2\rightarrow\mathbb{R}$ be a bivariate polynomial function vanishing at the origin and having a strict local minimum at  $O$. Denote the  \defi{connected component that contains the origin} of the set $f^{-1}([0,\varepsilon])$ by $\mathcal{D}_\varepsilon$, for $\varepsilon>0$. Let $\mathcal{C}_\varepsilon$ denote $\mathcal{D}_\varepsilon\setminus \Int \mathcal{D}_\varepsilon.$ 

\end{definition}

\begin{definition}\label{DefGenericDirection}
Let us consider the set $\mathcal{C}_{\varepsilon}$ as in Definition \ref{def:levelCEpsilon}, for $\varepsilon>0$. 
A direction $d$ which satisfies the following conditions:
 
 \begin{enumerate}
\item  $d$ is not the direction of any bitangent of $\mathcal{C}_{\varepsilon}$;

\item $d$ is not the direction of any tangent which passes through any inflection point of $\mathcal{C}_\varepsilon$;
 \end{enumerate}
is called a \defi{generic direction with respect to} $\mathcal{C}_\varepsilon.$

\end{definition}

\begin{notation}
Throughout this paper, by \enquote{sufficiently small $\varepsilon$} or \enquote{small enough $\varepsilon$} we mean: \enquote{there exists $\varepsilon_{0}>0$ such that, for any $0<\varepsilon<\varepsilon_{0}$, one has \ldots}. We denote this by $0<\varepsilon\ll 1.$
\end{notation}

\begin{definition}\cite[page 74]{Fi}\label{DefDualCurve}
Let $\mathcal{C}\subset \mathbb{CP}^2$ be an algebraic curve. Then $$\mathcal{C}^*:=\left \{L\in \mathbb{CP}^2 \mid L \text{ is tangent to } \mathcal{C} \text{ at some point } p\in\mathcal{C} \right \}$$ is called \defi{the dual curve of} $\mathcal{C}.$\index{dual curve}
\end{definition}

\begin{remark}
Definition \ref{DefDualCurve} is standard (see for instance \cite[pages 73-74]{Fi}, \cite[page 252]{BK}). We denote the dual projective space by $(\mathbb{CP}^2)^*$. To a point $$y=[y_0:y_1:y_2]\in (\mathbb{CP}^2)^*,$$ it corresponds the line $V(y_0 X_0+y_1 X_1+y_2 X2)\subset \mathbb{CP}^2$.
 
The dual curve can be visually constructed by using the theory of pole-polar relation in a circle, see \cite[pages 577-583]{BK}.
\end{remark}

The three next results are classical. The proofs can be found, for instance, in \cite[Chapter 5]{Fi}, \cite[Chapter 7]{Wa} or \cite[Chapter 1]{GKZ}.

\begin{proposition}\cite[page 74]{Fi}\label{PropCurbaDuala}
Let $\mathcal{C}\subset \mathbb{CP}^2$ be an algebraic curve that has no lines as components. Then:

(a) the dual curve of $\mathcal{C}$, denoted by $\mathcal{C}^*$, is an algebraic curve;

(b) $\mathcal{C}^{**}=\mathcal{C};$

(c) if $\mathcal{C}$ is irreducible, then $\mathcal{C}^*$ is irreducible and $\deg \mathcal{C}^* \geq 2.$
\end{proposition}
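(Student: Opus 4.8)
To prove the three assertions, the plan is to realize $\mathcal{C}^*$ as the Zariski closure of the image of the \emph{Gauss map} and then read everything off from standard properties of images of morphisms together with the biduality (reflexivity) theorem, the last being the real content. Write $\mathcal{C}=V(F)$ with $F\in\mathbb{C}[X_0,X_1,X_2]$ square-free homogeneous of degree $d$, let $\mathcal{C}_{\mathrm{reg}}$ be its (cofinite) smooth locus, and set $\partial_i:=\partial/\partial X_i$. Since the tangent line at $p\in\mathcal{C}_{\mathrm{reg}}$ is $V\big(\sum_i \partial_i F(p)\,X_i\big)$, the assignment
\[\gamma:\mathcal{C}_{\mathrm{reg}}\longrightarrow(\mathbb{CP}^2)^*,\qquad p\longmapsto\big[\partial_0 F(p):\partial_1 F(p):\partial_2 F(p)\big]\]
is a morphism, and $\mathcal{C}^*=\overline{\gamma(\mathcal{C}_{\mathrm{reg}})}$. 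For part (a): if $\gamma$ were constant on some irreducible component $\mathcal{C}_j$, then every smooth point of $\mathcal{C}_j$ would lie on one fixed line (the common tangent), forcing $\mathcal{C}_j$ to be that line and contradicting the no-line-components hypothesis; so $\gamma|_{(\mathcal{C}_j)_{\mathrm{reg}}}$ is non-constant, its image is a $1$-dimensional constructible set by Chevalley's theorem, and its closure is an irreducible algebraic curve. Taking the union over the finitely many components shows $\mathcal{C}^*$ is an algebraic curve.

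The step I expect to be the heart of the matter is part (b), the reflexivity theorem, which genuinely uses $\mathrm{char}\,\mathbb{C}=0$. I would prove it by a local computation at a general smooth point: choose a holomorphic lift $t\mapsto\gamma(t)\in\mathbb{C}^3\setminus\{0\}$ of an arc of $\mathcal{C}$, so that the dual point $\delta(t)\in(\mathbb{CP}^2)^*$ representing the tangent line at $\gamma(t)$ is determined up to scalar by $\langle\delta(t),\gamma(t)\rangle=0$ and $\langle\delta(t),\gamma'(t)\rangle=0$ (evaluation pairing), and $\delta$ parametrizes a branch of $\mathcal{C}^*$. Differentiating the first relation gives $\langle\delta'(t),\gamma(t)\rangle=-\langle\delta(t),\gamma'(t)\rangle=0$, hence $\gamma(t)$ is annihilated by both $\delta(t)$ and $\delta'(t)$. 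Because $\mathcal{C}_j$ is not a line, $\delta$ is projectively non-constant, so $\delta(t),\delta'(t)$ are linearly independent for general $t$; thus the tangent line to $\mathcal{C}^*$ at $\delta(t)$, read in $(\mathbb{CP}^2)^{**}=\mathbb{CP}^2$, is exactly the point $\gamma(t)$. Therefore $\mathcal{C}^{**}$ contains a dense subset of each component of $\mathcal{C}$; and since $\mathcal{C}^*$ has no line components — any line in $(\mathbb{CP}^2)^*$ is the pencil through a fixed point $q\in\mathbb{CP}^2$, and a general line through $q$ meets $\mathcal{C}$ transversally, hence is not tangent — part (a) applies to $\mathcal{C}^*$ and $\mathcal{C}^{**}$ is a curve, so it coincides with $\mathcal{C}$.

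For part (c): if $\mathcal{C}$ is irreducible then $\mathcal{C}_{\mathrm{reg}}$ is irreducible, hence so is $\gamma(\mathcal{C}_{\mathrm{reg}})$, and therefore $\mathcal{C}^*=\overline{\gamma(\mathcal{C}_{\mathrm{reg}})}$ is an irreducible curve. For the degree, $\deg\mathcal{C}^*$ is the number of points in which $\mathcal{C}^*$ meets a general line of $(\mathbb{CP}^2)^*$; dualizing, this is the number of tangent lines to $\mathcal{C}$ through a general point $q$, i.e.\ the class of $\mathcal{C}$. It is at least $1$ since $\mathcal{C}^*$ is a genuine curve by (a). If it were exactly $1$, then $\mathcal{C}^*$ would be a line, whose dual is a single point, contradicting $\mathcal{C}^{**}=\mathcal{C}$ (a curve) from (b). Hence $\deg\mathcal{C}^*\geq 2$.

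A word on the logical architecture, since my argument for (c) uses (b) and my argument for (b) uses (a) applied to $\mathcal{C}^*$, which in turn needs ``$\mathcal{C}^*$ has no line components'': the clean way to avoid circularity is to prove that last statement first and independently (via the transversality remark above), then run (a)$\,\Rightarrow\,$(b)$\,\Rightarrow\,$(c). The main obstacle is establishing (b) rigorously — the reflexivity theorem — since (a) and (c) are essentially bookkeeping about images of morphisms and intersection numbers, whereas (b) is where the characteristic-zero hypothesis is indispensable: in positive characteristic the Gauss map can be inseparable and $\mathcal{C}^{**}\subsetneq\mathcal{C}$ can occur.
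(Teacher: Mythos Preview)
The paper does not prove this proposition at all: it is stated as a classical result and the reader is referred to \cite[Chapter 5]{Fi}, \cite[Chapter 7]{Wa}, and \cite[Chapter 1]{GKZ} for proofs. Your argument is correct and is essentially the standard one found in those references --- realize $\mathcal{C}^*$ as the closure of the image of the Gauss map, deduce (a) and irreducibility in (c) from elementary properties of images of morphisms, prove reflexivity (b) by the local differentiation trick $\langle\delta',\gamma\rangle=-\langle\delta,\gamma'\rangle=0$, and then obtain $\deg\mathcal{C}^*\geq 2$ from biduality. One small point worth tightening in your sketch of (b): you show that the Gauss map of $\mathcal{C}^*$ sends a general point $\delta(t)$ back to $\gamma(t)\in\mathcal{C}$, which gives $\mathcal{C}\subseteq\mathcal{C}^{**}$; to conclude equality you should note that every component of $\mathcal{C}^*$ arises as $\mathcal{C}_j^*$ for some component $\mathcal{C}_j$ of $\mathcal{C}$ (this is immediate from your construction in (a)), so applying the local argument component-by-component rules out any extra components in $\mathcal{C}^{**}$.
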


\begin{lemma}\cite[page 20]{GKZ}\label{PropCorrespDual}
For a smooth curve $\mathcal{C}$, cusps of  $\mathcal{C}^*$ correspond to the simple inflection points of $\mathcal{C},$ i.e. such points where the tangent line has tangency to $\mathcal{C}$ of exactly the second order and is not tangent to $\mathcal{C}$ anywhere else. Similarly, nodes of $\mathcal{C}^*$ correspond to simple bitangents of $\mathcal{C}$, i.e. such that both tangencies are of first order and there are no other points of tangency.
\end{lemma}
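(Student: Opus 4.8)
The statement is local on $\mathcal{C}^*$, so the plan is to analyse the \emph{dual (Gauss) map} $\gamma\colon\mathcal{C}\to\mathcal{C}^*\subset(\mathbb{CP}^2)^*$, $p\mapsto T_p\mathcal{C}$, near the relevant points, and to deduce the two converse directions from biduality $\mathcal{C}^{**}=\mathcal{C}$ (Proposition \ref{PropCurbaDuala}(b)) together with the classical fact that, for a smooth plane curve of degree at least $2$, $\gamma$ is birational onto its image and hence realises the normalisation of $\mathcal{C}^*$. First I would fix an affine chart and coordinates so that near $p=(x,g(x))$ the curve is the graph $\{y=g(x)\}$ of a smooth function $g$; then the tangent line at $(x,g(x))$ is $g'(x)X-Y+(g(x)-xg'(x))=0$, so in the dual affine chart $\gamma$ reads $x\mapsto\big(g'(x),\,g(x)-xg'(x)\big)$, with velocity vector $g''(x)\cdot(1,-x)$. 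Consequently $\gamma$ is an immersion precisely where $g''\neq 0$, i.e.\ away from inflection points, so there $\mathcal{C}^*$ is locally smooth, and every singular point of $\mathcal{C}^*$ lies either over an inflection point of $\mathcal{C}$ (where $\gamma$ ramifies) or over a line tangent to $\mathcal{C}$ at several distinct points (where several branches of $\mathcal{C}^*$ meet).

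For the inflection--cusp part, let $p_0=(x_0,g(x_0))$ be a simple inflection, so $g''(x_0)=0$, $g'''(x_0)=c\neq 0$, and $T_{p_0}\mathcal{C}$ has no other point of tangency with $\mathcal{C}$. Writing $s=x-x_0$ and integrating the two components of $\gamma$ one gets, after recentring at $\gamma(p_0)$, $u=\tfrac{c}{2}s^2+O(s^3)$ and $v=-\tfrac{x_0c}{2}s^2+O(s^3)$; the linear change $\xi=u$, $\eta=v+x_0u$ kills the quadratic term of $\eta$ and yields $\xi=\tfrac{c}{2}s^2+O(s^3)$, $\eta=-\tfrac{c}{3}s^3+O(s^4)$, the standard parametrisation of an ordinary cusp. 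The hypothesis that $T_{p_0}\mathcal{C}$ is tangent nowhere else ensures there is no further branch of $\mathcal{C}^*$ through $\gamma(p_0)$, so $\gamma(p_0)$ is genuinely a cusp of $\mathcal{C}^*$. Conversely, at an ordinary cusp $q$ of $\mathcal{C}^*$ the normalisation map $\gamma$ has a single preimage $p_0$ at which it ramifies to order two; ramification of the Gauss map is exactly the vanishing of $g''$ with $g'''\neq 0$, and the absence of a second preimage forbids a second tangency point, so $p_0$ is a simple inflection.

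For the bitangent--node part, let $\ell$ be a simple bitangent, touching $\mathcal{C}$ at distinct points $p_1\neq p_2$ with first-order contact at each and no other tangency. Then $\gamma(p_1)=\gamma(p_2)=\ell^*$, so $\mathcal{C}^*$ has exactly two local branches through $\ell^*$; each is smooth because $p_1,p_2$ are non-inflectional, hence immersion points of $\gamma$. By biduality, the tangent direction at $\ell^*$ of the branch coming from $p_i$ corresponds to the point $p_i\in\mathcal{C}^{**}=\mathcal{C}$; as $p_1\neq p_2$ the two directions are distinct, the branches cross transversally, and $\ell^*$ is an ordinary node. The converse is symmetric: the two distinct branch-tangent directions at a node, read back in $\mathcal{C}^{**}=\mathcal{C}$, give the two first-order contact points of a bitangent.

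The step I expect to be the real work is the \emph{exactness} of this dictionary rather than the mere existence of the correspondence: pinning down that an inflection of order exactly two (the condition $g'''(x_0)\neq0$) matches an \emph{ordinary} cusp and not a more degenerate singularity, that simple bitangency matches an \emph{ordinary} node, and --- for the converse implications --- justifying carefully that for a smooth plane curve the Gauss map is the normalisation of $\mathcal{C}^*$, so that counting branches and measuring their contact on $\mathcal{C}^*$ translates back faithfully into the inflection/bitangent data of $\mathcal{C}$.
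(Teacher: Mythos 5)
The paper does not prove this lemma: it is stated verbatim as a citation from \cite[page 20]{GKZ}, with the surrounding text remarking that the proofs of the three classical results (Proposition \ref{PropCurbaDuala}, this lemma, and Lemma \ref{PropNbSingul}) can be found in Fischer, Walker, or GKZ. There is therefore no in-paper argument to compare yours against; your Gauss-map proof is correct and is in substance the standard argument appearing in those references. Your local computation is right --- with $\gamma(x)=(g'(x),\,g(x)-xg'(x))$ the velocity is $g''(x)(1,-x)$, so $\gamma$ ramifies exactly at inflections; at a simple inflection ($g''(x_0)=0$, $g'''(x_0)=c\neq 0$) the shear $\eta=v+x_0u$ indeed gives $\xi=\tfrac{c}{2}s^2+O(s^3)$, $\eta=-\tfrac{c}{3}s^3+O(s^4)$, an ordinary cusp; and biduality $\mathcal{C}^{**}=\mathcal{C}$ identifies the two branch tangents at a bitangent's dual point with $p_1\neq p_2$, giving transversality and an ordinary node. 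The one point you flagged as "the real work" --- that the Gauss map is birational onto $\mathcal{C}^*$, hence is its normalisation --- is exactly what needs to be imported (it follows from reflexivity in characteristic $0$, as in \cite[Chapter 1]{GKZ}), and is the only nontrivial ingredient beyond the Taylor computations.
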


\begin{lemma}\cite[page 49]{Fi}\label{PropNbSingul}
An irreducible algebraic curve $\mathcal{C}\subset \mathbb{CP}^2$ of degree n has at most $\frac{1}{2}(n-1)(n-2)$ singularities.
\end{lemma}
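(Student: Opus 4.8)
The plan is to establish the bound by the classical Bézout argument that also underlies the genus formula for plane curves. For $n\le 2$ the claim is trivial — an irreducible conic is smooth — so I assume $n\ge 3$ and argue by contradiction: suppose $\mathcal{C}$ has at least $\frac12(n-1)(n-2)+1$ singular points. Among them I fix exactly $N:=\frac12(n-1)(n-2)+1$ of them, say $p_1,\dots,p_N$; at each $p_i$ the local multiplicity $\operatorname{mult}_{p_i}\mathcal{C}$ is at least $2$.

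First I would produce an auxiliary curve of low degree passing through all these points together with a few extra smooth points. Since an irreducible plane curve has only finitely many singular points, I may also pick $n-3$ pairwise distinct smooth points $q_1,\dots,q_{n-3}$ of $\mathcal{C}$ (none if $n=3$). The homogeneous forms of degree $n-2$ in three variables form a vector space of dimension $\frac12 n(n-1)$, and requiring a form to vanish at a prescribed point is one linear condition, so the number of conditions imposed by $p_1,\dots,p_N,q_1,\dots,q_{n-3}$ is
\[
N+(n-3)=\tfrac12(n-1)(n-2)+1+(n-3)=\tfrac12(n-2)(n+1)=\tfrac12 n(n-1)-1,
\]
whence the solution space has dimension at least $1$ and there is a nonzero form defining a curve $D$ of degree $n-2$ through all the $p_i$ and all the $q_j$.

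Then I would invoke Bézout. Since $\deg D=n-2<n=\deg\mathcal{C}$ and $\mathcal{C}$ is irreducible, $\mathcal{C}$ is not a component of $D$, so $\mathcal{C}$ and $D$ have no common component and $\sum_p I_p(\mathcal{C},D)=n(n-2)$. On the other hand $I_{p_i}(\mathcal{C},D)\ge\operatorname{mult}_{p_i}(\mathcal{C})\cdot\operatorname{mult}_{p_i}(D)\ge 2$ at each of the $N$ singular points and $I_{q_j}(\mathcal{C},D)\ge 1$ at each of the $n-3$ smooth points, so
\[
n(n-2)=\sum_p I_p(\mathcal{C},D)\ \ge\ 2N+(n-3)=n^2-2n+1>n(n-2),
\]
a contradiction. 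Hence $\mathcal{C}$ has at most $\frac12(n-1)(n-2)$ singular points.

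There is essentially no obstacle; the only points needing care are the bookkeeping that makes the count of linear conditions land exactly one below the dimension of the linear system (this is precisely why one uses degree $n-2$ together with $n-3$ auxiliary points), and the inequality $I_{p_i}(\mathcal{C},D)\ge 2$, which is the standard fact that the intersection multiplicity at a common point is at least the product of the two local multiplicities. I would also record the shorter alternative: the arithmetic genus of a degree-$n$ plane curve equals $\frac12(n-1)(n-2)$, the geometric genus of its normalization is nonnegative, and each singular point contributes at least $1$ (its delta invariant) to the drop, which yields the bound immediately.
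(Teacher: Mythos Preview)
Your argument is correct: this is the standard B\'ezout-theoretic proof (and your alternative via the genus formula and delta invariants is equally valid). Note, however, that the paper does not supply its own proof of this lemma at all --- it is stated as a classical result with a citation to \cite[page 49]{Fi}, and the surrounding text refers the reader to \cite[Chapter 5]{Fi}, \cite[Chapter 7]{Wa}, and \cite[Chapter 1]{GKZ} for proofs. So there is nothing to compare against in the paper itself; your write-up is essentially the argument one finds in those references.
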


In the sequel, let us first prove a classical result, namely that any irreducible algebraic curve $\mathcal{C}\subset \mathbb{CP}^2$ has a finite number of bitangent lines and finitely many inflection points.

This will lead us later (see Subsection \ref{subs:asymptGeneric}) to a new result regarding the choice of a generic direction in an asymptotic setting, namely a direction which is generic with respect to $\mathcal{C}_\varepsilon$, for any small enough $\varepsilon>0$.

\begin{lemma}\label{PropFiniteBitg}
An irreducible algebraic curve $\mathcal{C}\subset \mathbb{CP}^2$ has a finite number of bitangent lines.
\end{lemma}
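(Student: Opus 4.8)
The plan is to reduce the finiteness of bitangents of $\mathcal{C}$ to the finiteness of singularities of the dual curve $\mathcal{C}^*$, which is controlled by Lemma \ref{PropNbSingul}. First I would invoke Proposition \ref{PropCurbaDuala}: since $\mathcal{C}$ is irreducible and (being irreducible of degree $\geq 2$, as a line has no bitangents) has no lines as components, its dual $\mathcal{C}^*\subset(\mathbb{CP}^2)^*$ is again an irreducible algebraic curve, of some finite degree $m=\deg\mathcal{C}^*$. The key observation is the standard duality dictionary: a line $L$ that is bitangent to $\mathcal{C}$ (tangent at two distinct smooth points $p_1\neq p_2$) corresponds, under the point-line duality of $\mathbb{CP}^2$, to a point $L^\vee\in\mathcal{C}^*$ through which pass (at least) two distinct branches of $\mathcal{C}^*$ — namely the images of small arcs of $\mathcal{C}$ near $p_1$ and near $p_2$ — so $L^\vee$ is a singular point of $\mathcal{C}^*$. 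This is essentially the content of Lemma \ref{PropCorrespDual}, which identifies simple bitangents with nodes of $\mathcal{C}^*$; to be safe I would phrase the argument so that every bitangent, simple or not, produces a singular point of $\mathcal{C}^*$, since at such a point the dual curve is not locally the graph of a single analytic branch.

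Next I would apply Lemma \ref{PropNbSingul} to the irreducible curve $\mathcal{C}^*$ of degree $m$: it has at most $\tfrac12(m-1)(m-2)$ singular points. Since the map sending a bitangent line $L$ to the dual point $L^\vee$ is injective (distinct lines give distinct points of $(\mathbb{CP}^2)^*$) and lands in $\mathrm{Sing}(\mathcal{C}^*)$, the number of bitangent lines of $\mathcal{C}$ is bounded by $\tfrac12(m-1)(m-2)$, hence finite. This completes the proof.

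The main obstacle is the local analysis at $p_1$ and $p_2$: one must be sure that the tangent line $L$ really does give a genuine singular point of $\mathcal{C}^*$, and not, say, a smooth point that happens to be hit twice by the parametrization. For this I would note that the Gauss map $\mathcal{C}\dashrightarrow\mathcal{C}^*$, $p\mapsto T_p\mathcal{C}$, is generically one-to-one onto its image (this is the birationality underlying $\mathcal{C}^{**}=\mathcal{C}$ in Proposition \ref{PropCurbaDuala}(b)), so two distinct smooth points $p_1,p_2$ with the same tangent produce two distinct local branches of $\mathcal{C}^*$ meeting at $L^\vee$, forcing $L^\vee\in\mathrm{Sing}(\mathcal{C}^*)$. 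A degenerate flex or a higher-order tangency at one of the $p_i$ only makes the singularity worse, not absent. One also has to handle, or explicitly exclude by a genericity remark, finitely many pathological tangent points (cusps of $\mathcal{C}$ itself, etc.); since these are finite in number anyway, they do not affect the conclusion. With these points addressed, the bound $\tfrac12(m-1)(m-2)$ on the count of bitangents follows immediately.
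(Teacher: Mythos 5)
Your proposal follows essentially the same route as the paper: pass to the dual curve via Proposition \ref{PropCurbaDuala}, identify bitangents with singular points of $\mathcal{C}^*$ via Lemma \ref{PropCorrespDual}, and bound the number of singularities via Lemma \ref{PropNbSingul}. Your extra care about non-simple bitangents, the injectivity of $L\mapsto L^\vee$, and the exclusion of the linear case is a welcome tightening of the argument, but the underlying strategy is identical to the paper's.
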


\begin{proof}
By Proposition \ref{PropCurbaDuala} (b), $\mathcal{C}^*$ is also an algebraic curve. Thus, by Lemma \ref{PropNbSingul}, the dual algebraic curve $\mathcal{C}^*$ has a finite number of double nodes, namely
$\frac{1}{2}(n^*-1)(n^*-2),$ where $n^*:=\deg \mathbb{C}^*$.
By Lemma \ref{PropCorrespDual}, the bitangents of $\mathcal{C}$ correspond to the double nodes of $\mathcal{C}^*$.
\end{proof}

A similar result for the inflection points can be given, but \cite{Fi} gave a better result, in function of $n:=\deg \mathcal{C}$, as follows:

\begin{lemma}\cite[page 68]{Fi}\label{PropFiniteInflec}
An algebraic curve $\mathcal{C}\subset \mathbb{CP}^2$ of degree $n:=\deg \mathcal{C} \geq 2$ that contains no lines has at most $3n(n-2)$ inflection points.
\end{lemma}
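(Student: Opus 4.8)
One could extract mere finiteness from Lemma \ref{PropCorrespDual} (simple inflections of $\mathcal{C}$ correspond to cusps of $\mathcal{C}^*$) together with the finiteness of the singular locus of $\mathcal{C}^*$, but that would only bound the inflections in terms of $\deg\mathcal{C}^*$. To get the sharp bound $3n(n-2)$ in terms of $n$ itself, I would instead intersect $\mathcal{C}$ with its Hessian curve and invoke B\'ezout's theorem. So I would fix a square-free homogeneous polynomial $F\in\bC[X_0,X_1,X_2]$ of degree $n$ with $\mathcal{C}=V(F)$ and form its Hessian $\mathrm{Hess}(F):=\det\big(\partial^2 F/\partial X_i\partial X_j\big)_{0\le i,j\le 2}$, which is homogeneous of degree $3(n-2)$ since each second partial derivative has degree $n-2$ and the determinant is a sum of products of three of them. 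From \cite[Chapter 5]{Fi} I would borrow the classical Hessian criterion: a smooth point $p\in\mathcal{C}$ is an inflection point of $\mathcal{C}$ if and only if $\mathrm{Hess}(F)(p)=0$. Since inflection points are smooth points by definition, the set of inflection points of $\mathcal{C}$ is contained in $\mathcal{C}\cap V(\mathrm{Hess}(F))$.

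The key point is then that $F$ and $\mathrm{Hess}(F)$ have no common irreducible factor, and this is exactly where the hypothesis that $\mathcal{C}$ contains no lines is used. Suppose some irreducible factor $G$ of $F$ divided $\mathrm{Hess}(F)$. Write $F=G\cdot R$ with $\gcd(G,R)=1$ (possible since $F$ is square-free); on $V(G)$ one has $\partial_i F=R\cdot\partial_i G$, so the points of $V(G)$ that are singular on $\mathcal{C}$ all lie on $V(R)$ or on the singular locus of $V(G)$, hence form a finite set. Thus a generic point $p$ of $V(G)$ is a smooth point of $\mathcal{C}$ whose local branch is $V(G)$ and at which $\mathrm{Hess}(F)(p)=0$, so $p$ is an inflection point of $V(G)$; writing $V(G)$ near $p$ as a graph $y=y(x)$, this means $y''$ vanishes along a whole arc, forcing that arc --- hence, by irreducibility, $V(G)$ itself --- to be a line, a contradiction. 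In particular $\mathrm{Hess}(F)\not\equiv 0$.

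Once $F$ and $\mathrm{Hess}(F)$ are known to share no component, B\'ezout's theorem yields $\#\big(\mathcal{C}\cap V(\mathrm{Hess}(F))\big)\le\deg F\cdot\deg\mathrm{Hess}(F)=n\cdot 3(n-2)=3n(n-2)$, and since the inflection points form a subset of this intersection the bound follows. I expect the no-common-component step to be the main obstacle: the degree bookkeeping is routine, but there one must carefully justify both the Hessian criterion and the classical fact that a reduced irreducible plane curve carrying infinitely many inflection points is a line, via the local graph computation indicated above.
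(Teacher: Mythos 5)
The paper does not prove this lemma; it is quoted directly from Fischer's book \cite[page 68]{Fi} without argument. Your Hessian-plus-B\'ezout proof is correct and is in fact the standard one (and the one Fischer uses): the degree count $\deg\mathrm{Hess}(F)=3(n-2)$, the Hessian criterion for inflections at smooth points, and the reduction of the no-common-component step to the classical fact that an irreducible plane curve carrying infinitely many inflections is a line are all handled properly, and the use of square-freeness of $F$ to control $\Sing\,\mathcal{C}\cap V(G)$ is exactly the right device. The only place that deserves one more sentence is the passage from ``$\mathrm{Hess}(F)(p)=0$'' to ``$p$ is an inflection of $V(G)$'': you should say explicitly that at a smooth point $p$ of $V(F)$ lying on $V(G)$ one has $R(p)\neq 0$, so $V(F)$ and $V(G)$ coincide as germs at $p$ and the inflection condition is the same for both; as written this is implicit in the phrase ``whose local branch is $V(G)$.'' With that clarification the argument is complete.
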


\begin{theorem}\label{prop:FiniteNonGeneric}
Let $\mathcal{C}_\varepsilon$ be as in Definition \ref{def:levelCEpsilon}. Then all but finitely many directions of projection are generic with respect to $\mathcal{C}_\varepsilon$, for a fixed $\varepsilon>0$.
\end{theorem}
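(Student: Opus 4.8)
The plan is to reduce the statement about $\mathcal{C}_\varepsilon$ to the classical finiteness results already established for irreducible complex projective curves. First I would recall that $\mathcal{C}_\varepsilon$ is (a connected component of) the zero locus of $f - \varepsilon$, hence a real algebraic curve; let $\widetilde{\mathcal{C}}_\varepsilon \subset \mathbb{CP}^2$ denote the Zariski closure of the complexification of the level set $(f = \varepsilon)$. The conditions in Definition \ref{DefGenericDirection} are imposed on $\mathcal{C}_\varepsilon$, but a bitangent, or a tangent through an inflection point, of the real arc $\mathcal{C}_\varepsilon$ is in particular such a line for the ambient algebraic curve $\widetilde{\mathcal{C}}_\varepsilon$. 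So it suffices to bound the non-generic directions of $\widetilde{\mathcal{C}}_\varepsilon$.

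Next I would decompose $\widetilde{\mathcal{C}}_\varepsilon$ into its finitely many irreducible components $\widetilde{\mathcal{C}}_\varepsilon = \bigcup_{j=1}^{m} \mathcal{Z}_j$. For each component that is a line, its direction is a single point of $\mathbb{RP}^1$, contributing finitely many directions to exclude (and a line has no inflections or bitangents of its own). For each component $\mathcal{Z}_j$ that is not a line, Lemma \ref{PropFiniteBitg} gives finitely many bitangents of $\mathcal{Z}_j$ and Lemma \ref{PropFiniteInflec} gives finitely many inflection points, hence finitely many inflectional tangent lines. In addition one must account for lines that are simultaneously tangent to two distinct components $\mathcal{Z}_j$ and $\mathcal{Z}_k$ (a "bitangent" realized across components) and for lines passing through the finitely many singular points of $\widetilde{\mathcal{C}}_\varepsilon$ in a tangent-like fashion; a clean way to handle all of this at once is to pass to the dual curves $\mathcal{Z}_j^*$ (Proposition \ref{PropCurbaDuala}) and observe that the directions to be excluded correspond to the finitely many singular points of $\bigcup_j \mathcal{Z}_j^*$ together with the finitely many intersection points $\mathcal{Z}_j^* \cap \mathcal{Z}_k^*$ for $j \neq k$, each of these sets being finite by Bézout and by Lemma \ref{PropNbSingul}. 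Finally, intersecting the finite set of excluded lines with the line pencil through a fixed point (equivalently, reading off their directions) gives a finite subset of $\mathbb{RP}^1$ outside of which every direction is generic with respect to $\mathcal{C}_\varepsilon$.

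The main obstacle I expect is purely bookkeeping rather than conceptual: making sure the definition of "generic with respect to $\mathcal{C}_\varepsilon$" is faithfully captured by the projective/dual picture, in particular ensuring that a bitangent of the smooth real Jordan arc $\mathcal{C}_\varepsilon$ really does show up as a node (or higher singularity, or a component-crossing intersection) of the dual curve, and that no non-generic direction is missed because of the distinction between the topological curve $\mathcal{C}_\varepsilon$ and the algebraic set $(f = \varepsilon)$. Since $\varepsilon$ is fixed here, no uniformity in $\varepsilon$ is needed — that uniform statement is precisely the separate content of Theorem \ref{th:almostallgenericIntro} — so the argument stays at the level of one curve and the classical results quoted above suffice.
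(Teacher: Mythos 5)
Your proof is correct and follows the same route as the paper, which simply cites Lemma \ref{PropFiniteBitg} and Lemma \ref{PropFiniteInflec} together with Definition \ref{DefGenericDirection} in a single sentence. You go noticeably further by explicitly passing to the projective complexification, decomposing into irreducible components, and handling line components and cross-component bitangents via the dual curves; this extra care is warranted, since Lemma \ref{PropFiniteBitg} is stated for irreducible curves and Lemma \ref{PropFiniteInflec} excludes line components, and the paper's one-line proof leaves those reductions implicit.
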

\begin{proof}
The proof is a consequence of Lemma \ref{PropFiniteBitg} and Lemma \ref{PropFiniteInflec}, using Definition \ref{DefGenericDirection}.
\end{proof}

\subsection{Choosing the direction of a generic projection for an asymptotic family of level curves}\label{subs:asymptGeneric}
For sufficiently small $\varepsilon >0,$ the set $\mathcal{C}_\varepsilon$ is a smooth Jordan curve (\cite[Lemma 5.3]{So3}). In particular, it is a smooth compact component of the level curve $(f=\varepsilon)$. From now on, let us place ourselves in this asymptotic setting. The purpose of this section is to show that, except for finitely many small intervals on the circle of projection directions, all the directions are generic (in the sense of Definition \ref{DefGenericDirection}) for all $\mathcal{C}_\varepsilon$, where $\varepsilon>0$ is sufficiently small. More precisely, we prove the following result:

\begin{theorem}\label{th:almostallgeneric}
There exists a real number $\nu>0$, and there are finitely many intervals $U_i\subset \mathbb{RP}^1$, of length $\nu$, such that for any $0<\varepsilon\ll \nu,$ all the projection directions of $\mathbb{RP}^1\setminus \bigcup_{i}U_i$ are generic with respect to $\mathcal{C}_\varepsilon$ (as in Definition \ref{DefGenericDirection}). 
\end{theorem}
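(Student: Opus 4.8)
The plan is to upgrade the finiteness statement for a single curve (Theorem \ref{prop:FiniteNonGeneric}) to a uniform statement for the whole asymptotic family, using the fact — established in \cite[Theorem 5.31]{So3} — that the Poincaré-Reeb data, and more precisely the relevant local geometry of $\mathcal{C}_\varepsilon$, stabilises as $\varepsilon\to 0^+$. First I would fix a good neighbourhood $V$ of the origin (Definition \ref{def:GoodNeighb}) on which $f$ has only the origin as a strict local minimum and $V\cap(f=0)=\{O\}$, so that for all small $\varepsilon>0$ the set $\mathcal{C}_\varepsilon$ is a smooth Jordan curve contained in $V$. The key observation is that a direction $d\in\mathbb{RP}^1$ fails to be generic for $\mathcal{C}_\varepsilon$ only if $d$ is the direction of a bitangent line or of a tangent line at an inflection point of $\mathcal{C}_\varepsilon$; both kinds of tangency are detected by the contact of $\mathcal{C}_\varepsilon$ with its (real) polar curve $\Gamma(f,\ell)$ in the direction $\ell$, which is an algebraic curve independent of $\varepsilon$.

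The heart of the argument is a limiting/compactness step. As $\varepsilon\to 0^+$ the curves $\mathcal{C}_\varepsilon$ shrink to the origin, so one should rescale: after blowing up (or using a Milnor-type conic structure), the rescaled curves $\widetilde{\mathcal{C}}_\varepsilon$ converge to a fixed limiting curve germ determined by the lowest-order part of $f$ (the tangent cone, or the Newton-degenerate principal part), whose non-generic directions form a finite set $\{d_1,\dots,d_m\}\subset\mathbb{RP}^1$. I would argue that the set of non-generic directions of $\mathcal{C}_\varepsilon$ is contained, for all sufficiently small $\varepsilon$, in an arbitrarily small neighbourhood of $\{d_1,\dots,d_m\}$: a non-generic direction corresponds to a double root of the discriminant of the polar map (as in Proposition \ref{prop:red} and Proposition \ref{prop:homeo}), these discriminant conditions depend continuously on $\varepsilon$, and as $\varepsilon\to 0$ their solution set converges (in the Hausdorff metric on the compact space $\mathbb{RP}^1$) to the corresponding finite solution set for the limit. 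Choosing $\nu>0$ small enough and letting $U_i$ be the open interval of length $\nu$ centred at $d_i$, we get that for all $0<\varepsilon\ll\nu$ every direction in $\mathbb{RP}^1\setminus\bigcup_i U_i$ is generic for $\mathcal{C}_\varepsilon$; and there are only $m$ such intervals, which is finite.

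I expect the main obstacle to be making the limiting step rigorous: one must control not just that the bitangent and inflectional directions exist and are finite for each $\varepsilon$, but that they do not "run off to infinity in number" or "spread out" as $\varepsilon\to 0$ — i.e. a genuine upper-semicontinuity statement for the non-generic locus. The clean way around this is precisely the polar-curve description: the bitangents and inflectional tangents of $\mathcal{C}_\varepsilon$ in a given direction $\ell$ are governed by the discriminant of the map $(x,y)\mapsto(\ell(x,y),f(x,y))$ restricted to $\Gamma(f,\ell)$, an object that is algebraic and defined once and for all in $f$, with $\varepsilon$ entering only as the level. Then "non-generic for some $\varepsilon\in(0,\varepsilon_0)$" cuts out a semialgebraic subset of $\mathbb{RP}^1\times(0,\varepsilon_0)$ whose fibres over small $\varepsilon$ are finite, and by semialgebraic triviality (curve selection / the theory in \cite[Section 11.6]{RoyAlgRAG}) its closure meets $\{\varepsilon=0\}$ in a finite set, whose points are the $d_i$. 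The remaining details — choosing $\nu$, bounding $m$ in terms of $\deg f$ via Lemma \ref{PropFiniteBitg} and Lemma \ref{PropFiniteInflec} applied to the stabilised curve — are routine.
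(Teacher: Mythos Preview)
Your rescaling/tangent-cone idea in the second paragraph is not the way in: at a strict local minimum the levels $\mathcal{C}_\varepsilon$ shrink to the point $O$, and there is no canonical ``limit curve'' whose bitangent/inflection directions would control those of $\mathcal{C}_\varepsilon$. You recognise this yourself and pivot, which is the right call.

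Your third-paragraph approach---consider the semialgebraic set $S\subset\mathbb{RP}^1\times(0,\varepsilon_0)$ of pairs $(\ell,\varepsilon)$ for which $\ell$ is non-generic for $\mathcal{C}_\varepsilon$, note that each fibre over $\varepsilon$ is finite (Theorem \ref{prop:FiniteNonGeneric}) so $\dim S\le 1$, and conclude that the closure of $S$ meets $\{\varepsilon=0\}$ in finitely many points because semialgebraic arcs have well-defined limits---is a valid route, but it is \emph{not} the one the paper takes. The paper works in the source plane $\mathbb{R}^2_{x,y}$ rather than in the direction space: it introduces the Hessian locus $\mathcal{H}=\{\det\mathrm{Hess}(f)=0\}$ (the inflection points of \emph{all} levels at once) and the bitangent locus $\mathcal{B}$ (points admitting a bitangent to their level), shows both are semialgebraic of dimension $\le 1$ (the latter via Tarski--Seidenberg and the single-level finiteness), hence have finitely many half-branches at $O$, and then pushes each half-branch into $\mathbb{RP}^1$ by the normalised gradient $\psi(x,y)=[\partial_y f:\partial_x f]$, using that a semialgebraic arc does not spiral and so has a limiting direction $[p_i:1]$. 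The intervals $U_i$ are then neighbourhoods of these finitely many $p_i$.

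Both arguments rest on the same two ingredients---Tarski--Seidenberg to produce a low-dimensional semialgebraic set, and the non-spiralling of semialgebraic arcs to extract finitely many limit directions---but they are organised differently. The paper's version is more geometric (one can draw $\mathcal{H}$ and $\mathcal{B}$ and see the half-branches) and makes the limiting directions explicit as tangent directions to level curves along those half-branches; your version is cleaner in that it never leaves the parameter space $\mathbb{RP}^1\times(0,\varepsilon_0)$, at the cost of hiding the geometry. One caution: Propositions \ref{prop:red} and \ref{prop:homeo} concern the \emph{vertical} direction only, so they are not quite the right hooks for a statement ranging over all $\ell\in\mathbb{RP}^1$; you would need their analogues for a variable direction, which is straightforward but should be said.
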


\begin{remark}
We consider the metric on $\mathbb{RP}^1$ to be the one  induced by the one on $\mathrm{S}^1/{\sim},$ where $\sim$ is the antipodal equivalence relation.
\end{remark}

\begin{proof}
$\bullet$ Firstly, let us consider the Hessian matrix of $f$, namely$$ \mathrm{Hess}\ (f)(x,y):=\begin{bmatrix}
\frac{\partial^2 f }{\partial x^2}(x,y) &\frac{\partial^2 f }{\partial x\partial y}(x,y) \\
\frac{\partial^2 f }{\partial y\partial x}(x,y) &\frac{\partial^2 f }{\partial y^2}(x,y) 
\end{bmatrix}.$$

Denote by $$\mathcal{H}:=\{(x,y)\in\mathbb{R}^2\mid \det\mathrm{Hess}\ (f)=0\}$$ the set of inflection points of the level sets of $f.$ Thus $\mathcal{H}\subset\mathbb{R}^2$ is a real algebraic set of dimension less or equal to 1 (see Figure \ref{fig:semialgTHB}, in green).

{\centering\vspace{10pt}
\includegraphics[scale=0.25]{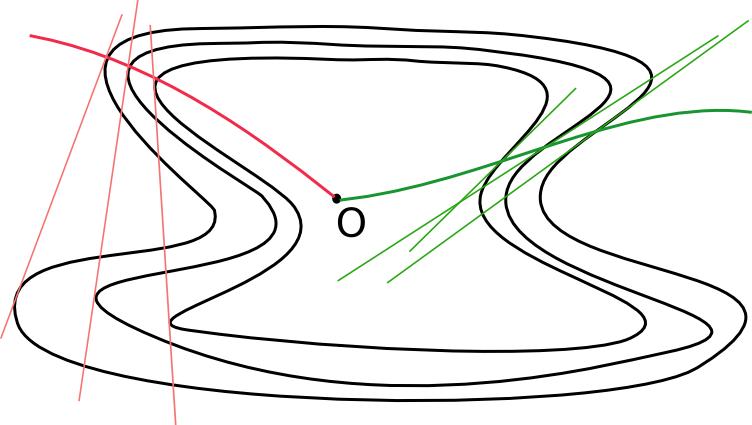} 
\captionof{figure}{In green, a half-branch $\gamma_{\mathcal{H}}\subset \mathcal{H}$. In red, a half-branch $\gamma_{\mathcal{B}}\subset\mathcal{B}$. The curves in black represent three curves $\mathcal{C}_\varepsilon$, for sufficiently small $\varepsilon>0$, in a good neighbourhood of the origin.\label{fig:semialgTHB}}}
\vspace{10pt}

$\bullet$ Secondly, let $$\mathcal{B}:=\{P\in\mathbb{R}^2\mid \exists Q\in\mathbb{R}^2, Q\neq P, \exists \varepsilon>0 \text{ such that } f(P)=f(Q)=\varepsilon \text{ and } P+T_P \mathcal{C}_\varepsilon=Q+T_Q \mathcal{C}_\varepsilon\}$$ be the set of points where there exists a bitangent to a level set of $f$. Here $T_P \mathcal{C}_\varepsilon$ is the tangent to $\mathcal{C}_\varepsilon$ at the point $P$. We want to prove that $\mathcal{B}$ is a semialgebraic set of dimension less or equal to 1 (see Figure \ref{fig:semialgTHB}, in red). 

Let $$\Sigma:=\{(P,Q)\in\mathbb{R}^2\times\mathbb{R}^2\mid  \exists \varepsilon>0 \text{ such that } f(P)=f(Q)=\varepsilon \text{ and } P+T_P \mathcal{C}_\varepsilon=Q+T_Q \mathcal{C}_\varepsilon\}$$ and let $$\delta:=\{(P,P)\in\mathbb{R}^2\times\mathbb{R}^2\mid \exists \varepsilon>0 \text{ such that } f(P)=\varepsilon \text{ and } P+T_P \mathcal{C}_\varepsilon=P+T_P \mathcal{C}_\varepsilon\}$$ be the diagonal of $\Sigma.$ Thus $\Sigma\setminus \delta$ is a semialgebraic set of $\mathbb{R}^2\times\mathbb{R}^2.$ Consider the projection $\Pi:\mathbb{R}^2\times\mathbb{R}^2\rightarrow\mathbb{R}^2,$ $\Pi(P,Q):=P.$ By Tarski-Seidenberg principle (see \cite[Section 2.1.2]{Co1}), $\Pi(\Sigma\setminus\delta)$ is a semialgebraic set of $\mathbb{R}^2$. Since $\Pi(\Sigma\setminus\delta)=\mathcal{B}$, $\mathcal{B}$ is a semialgebraic set. 

Let us argue by contradiction to prove that the dimension of the set $\mathcal{B}$ is at most $1$. Suppose $\mathcal{B}$ had dimension $2$. Take $P\in\mathcal{B}$. Then there exists a disk centred at $P$ included in $\mathcal{B}$. If $\varepsilon_P:=f(P)>0$, we obtain that there exists a level curve $\mathcal{C}_{\varepsilon_P}$, which intersects this disk in infinitely many points. This in impossible, by Theorem \ref{prop:FiniteNonGeneric}: for a fixed $\varepsilon_P>0$ there is a finite number of bitangents to $\mathcal{C}_\varepsilon$. In conclusion $\mathrm{dim}\ (\Pi(\Sigma\setminus \delta))\leq 1.$ 

$\bullet$ Therefore 
 $\mathring{\mathcal{H}}:=\mathcal{H}\setminus \{O\}$ and $\mathring{\mathcal{B}}:=\mathcal{H}\setminus \{O\}$ are semialgebraic sets of dimension less or equal to $1$. Hence the set $\mathring{\mathcal{H}} \cup \mathring{\mathcal{B}}$ is also a semialgebraic set of dimension at most $1$ in $\mathbb{R}^2\setminus\{O\}$. Thus, it has a finite number of half-branches at the origin (see \cite[Definition 5.13]{So3}).

$\bullet$ The next step of the proof is to define the polynomial map $\Phi:\mathbb{R}^2\rightarrow \mathbb{R}^2,$ $$\Phi(x,y):=\left (\frac{\partial f}{\partial x}(x,y),\frac{\partial f}{\partial y}(x,y)\right ).$$ The set $\mathring{\mathcal{H}} \cup \mathring{\mathcal{B}}$ is semialgebraic of dimension at most $1$, it is the union of finitely many half-branches: $\mathring{\mathcal{H}} \cup \mathring{\mathcal{B}}=\cup \gamma_i,$ for finitely many $i$. Namely we have $\gamma_i:\mathbb{R}\rightarrow\mathbb{R}^2.$

Let us fix $i_0$. The image by $\Phi$ of the half-branch $\gamma_{i_0}(t)$ is the arc $\Phi(\gamma_{i_0}(t))$. The slope of a secant of the half-branch $\Phi(\gamma_{i_0}(t))$ is given by $$\frac{\frac{\partial f}{\partial y}(\gamma_{i_0}(t))-0}{\frac{\partial f}{\partial x}(\gamma_{i_0}(t))-0}.$$ Since $\gamma_{i_0}$ is a semialgebraic half-branch, there exists the limit $$\lim_{t\rightarrow 0}\frac{\frac{\partial f}{\partial y}(\gamma_{i_0}(t))}{\frac{\partial f}{\partial x}(\gamma_{i_0}(t))}:=p_{i_0},$$ which is the slope of the tangent at the origin. In other words, we use the well-known fact that an algebraic half-branch never reaches the origin as an infinite spiral (see for instance \cite[105]{Gh1}). If $p_{i_0}=\infty,$ then make a change of coordinates. Therefore we can suppose that $p_{i_0}\in\mathbb{R}$ without loss of generality.

$\bullet$ The next step is to consider the map $\psi:\mathbb{R}^2\rightarrow\mathbb{RP}^1,$ defined by 
$$\psi(x,y):=\left [\frac{\partial f}{\partial y}(x,y):\frac{\partial f}{\partial x}(x,y)\right ].$$ 

Hence $$\lim_{t\rightarrow 0}\psi (\gamma_{i_0}(t))=\lim_{t\rightarrow 0}\left [\frac{\partial f}{\partial y}(\gamma_{i_0}(t)):\frac{\partial f}{\partial x}(\gamma_{i_0}(t))\right ]=\lim_{t\rightarrow 0}\left [\frac{\frac{\partial f}{\partial y}(\gamma_{i_0}(t))}{\frac{\partial f}{\partial x}\gamma_{i_0}(t))}:1\right ]=[p_{i_0}:1].$$

Since there are finitely many half-branches, we obtain finitely many points $[p_{i}:1].$ For each of them, let us define a small neighbourhood $U_{i}\subset\mathbb{RP}^1$, of $p_i$. In conclusion, there exists a real number $\nu>0$ such that the directions corresponding to $\mathbb{RP}^1\setminus \cup_i U_i$ are all generic directions (in the sense of  Definition \ref{DefGenericDirection}), with each $U_i$ having length $\nu$.

\end{proof}

\subsection{Genericity hypotheses via the polar curve}\label{subs:reducedHomeo}
The concept of polar curve (see \cite[page 589]{BK}) appeared already in the work of J.-V. Poncelet (\cite{Pon}) and M. Plücker (\cite{Pl}). There has been considerable research on the complex polar curves starting around the year 1970, when researchers such as García Barroso (\cite{GB1}, \cite{GB2}, \cite{GB3}),  L\^ e (\cite{Le1}), Teissier (\cite{Te1}, \cite{Te2}, \cite[page 682]{FT}), P\l oski, Gwo\' zdziewicz, Maugendre (\cite{Mau}), renewed the theory of polar curves via important investigations and
contributions, with numerous applications (\cite[page 769]{FT}). However, less is known about the real polar curves.

Polar varieties play an increasingly important role in computational real algebraic geometry. For instance, during the last two decenies researchers have been using them as geometric tools in proving correctness and in finding complexity estimates of algorithms, in the study of the topology of real affine varieties and in finding real solutions of polynomial equations. See for example results found by Bank, Safey El Din, Giusti, Heintz, Mbakop, Mork, Piene, Schost (\cite{MR2035216}, \cite{MR2585564}, \cite{MR3335572} and references therein).

In this subsection, our goal is to express the genericity hypotheses on the direction of projection (as in Definition \ref{DefGenericDirection}) in terms of the polar curve associated to the given polynomial $f$, and to this given direction.

\begin{definition}\cite[Section 4]{GB1}\label{DefPolCurve}
Let $f:\bR^2\rightarrow\bR$ be a polynomial function. 
The set 
$$\Gamma(f,x):=\left \{(x,y)\in\bR^2 \mid\frac{\partial f}{\partial y}(x,y)=0\right \}$$
is called \defi{the polar curve of} $f$ \defi{with respect to}\index{polar curve} $x.$
\end{definition}

\begin{remark}
The polar curve $\Gamma(f,x)$ consists of all the points $(x,y)\in\bR^2$ where the level curves of $f$ have a vertical tangent. For example, see Figure \ref{fig:snakeRightIntro}, where the polar curve appears in red. If $f$ has a strict local minimum, then the polar curve does not contain the projection direction (see \cite[Proposition 5.6]{So3}).
\end{remark}

\begin{proposition}
There are no vertical tangents to $\mathcal{C}_\varepsilon$, for $\varepsilon>0$ sufficiently small, that pass through the origin.
\end{proposition}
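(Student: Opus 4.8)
The plan is to combine the fact that the origin is a \emph{strict local minimum} of $f$ with the earlier remark that, in this situation, the polar curve $\Gamma(f,x)$ does not contain the projection direction, i.e. does not pass through the origin (\cite[Proposition 5.6]{So3}). A vertical tangent to $\mathcal{C}_\varepsilon$ is, by the Remark following Definition \ref{DefPolCurve}, exactly a point of $\mathcal{C}_\varepsilon\cap\Gamma(f,x)$. So the statement is equivalent to: for $\varepsilon>0$ sufficiently small, the origin $O$ does not lie on the tangent line to $\mathcal{C}_\varepsilon$ at any point $P\in\mathcal{C}_\varepsilon\cap\Gamma(f,x)$.

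First I would set up the picture in a good neighbourhood $V$ of the origin (Definition \ref{def:GoodNeighb}), where $\mathcal{D}_\varepsilon$ is a topological disk bounded by the smooth Jordan curve $\mathcal{C}_\varepsilon$. Suppose, for contradiction, that there is a sequence $\varepsilon_n\downarrow 0$ and points $P_n\in\mathcal{C}_{\varepsilon_n}$ with $\frac{\partial f}{\partial y}(P_n)=0$ and with the vertical line through $P_n$ passing through $O$; equivalently, writing $P_n=(x_n,y_n)$, we have $x_n=0$, so $P_n=(0,y_n)$ with $f(0,y_n)=\varepsilon_n$ and $\frac{\partial f}{\partial y}(0,y_n)=0$. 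Since $\mathcal{D}_{\varepsilon_n}$ shrinks to $\{O\}$ as $\varepsilon_n\to 0$ (this is the content of $\mathcal{C}_\varepsilon$ being the real Milnor fibre, cf. \cite[Lemma 5.3]{So3}), we get $y_n\to 0$, hence $P_n\to O$. Thus $O$ would be a limit of points of the polar curve $\Gamma(f,x)$, so $O\in\Gamma(f,x)$ (the polar curve is closed), contradicting \cite[Proposition 5.6]{So3}.

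Alternatively — and this is the cleaner route to write — one can argue purely pointwise: a vertical tangent to $\mathcal{C}_\varepsilon$ through $O$ would force the vertical line $\{x=0\}$ to be tangent to $\mathcal{C}_\varepsilon$, meeting it at a point $(0,y_\varepsilon)$ with $\frac{\partial f}{\partial y}(0,y_\varepsilon)=0$, i.e. $(0,y_\varepsilon)\in\Gamma(f,x)$. Restricting $f$ to the line $x=0$, the function $t\mapsto f(0,t)$ has a strict local minimum at $t=0$ with value $0$ (because $O$ is a strict local minimum of $f$ in the plane and $V\cap(f=0)=\{O\}$), so for small $\varepsilon$ the equation $f(0,t)=\varepsilon$ has solutions $t$ bounded away from any critical point of $t\mapsto f(0,t)$ other than $0$; but $t=0$ is not on $\mathcal{C}_\varepsilon$. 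More directly: $\frac{\partial f}{\partial y}(0,0)=0$ since $O$ is a minimum, so $O\in\Gamma(f,x)$ would follow trivially, yet \cite[Proposition 5.6]{So3} asserts the polar curve \emph{does not contain the projection direction} near $O$ precisely in the refined sense that no branch of $\Gamma(f,x)$ through $O$ is vertical; I would quote that statement in the form needed and conclude that the vertical line is not a component nor a tangent branch, so for $\varepsilon$ small enough no point of $\mathcal{C}_\varepsilon$ on the vertical axis can carry a vertical tangent.

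The main obstacle is bookkeeping about what ``the polar curve does not contain the direction of projection'' means: one must be careful that $\frac{\partial f}{\partial y}$ vanishes at $O$ automatically (since $O$ is a critical point), so the relevant statement from \cite[Proposition 5.6]{So3} is not that $O\notin\Gamma(f,x)$ as a set, but that the \emph{half-branches} of $\Gamma(f,x)$ at $O$ have finite, non-vertical slopes. Once that is pinned down, the proof is a short limiting argument: any hypothetical vertical tangent to $\mathcal{C}_\varepsilon$ through $O$ produces, for $\varepsilon\to 0$, a sequence of points on $\Gamma(f,x)$ approaching $O$ along the vertical direction, contradicting the non-vertical-slope property of the branches. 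I would therefore structure the write-up as: (1) recall the equivalence ``vertical tangent $\iff$ point of $\Gamma(f,x)$''; (2) recall the branch-slope statement from \cite[Proposition 5.6]{So3}; (3) run the one-line contradiction.
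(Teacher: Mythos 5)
Your key geometric reduction is correct and actually a bit more direct than the paper's: a vertical tangent at $(x_0,y_0)$ \emph{is} the line $x=x_0$, so it passes through $O$ if and only if $x_0=0$; the paper reaches the same conclusion via the perpendicularity of $\overrightarrow{OM_0}$ and the gradient $\nabla f(M_0)$. However, the first version of your argument has a genuine gap, which you yourself flag: concluding that $O$ is a limit of points of $\Gamma(f,x)$, hence $O\in\Gamma(f,x)$, is no contradiction, since $\frac{\partial f}{\partial y}(0,0)=0$ makes $O\in\Gamma(f,x)$ automatically. The paper closes the gap differently from what you speculate about branch slopes: a sequence $\varepsilon_n\downarrow 0$ producing vertical tangents through $O$ yields infinitely many \emph{distinct} points $(0,y_n)\in\Gamma(f,x)$ with $y_n\neq 0$ accumulating at $O$; since $\frac{\partial f}{\partial y}(0,y)$ is then a univariate polynomial with infinitely many zeros, it vanishes identically, so the whole $y$-axis is \emph{contained} in $\Gamma(f,x)$. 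It is this containment, not a tangency assertion about half-branches, that \cite[Proposition 5.6]{So3} rules out.

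Your ``alternatively'' paragraph contains the germ of a clean, self-contained route, but needs tightening. The precise statement is: $g(t):=f(0,t)$ is a univariate polynomial with $g(0)=0$ and $g(t)>0$ for small $t\neq 0$ (by the strict local minimum of $f$ at $O$), so $g$ is nonconstant near $0$ and $g'$ is a nonzero polynomial; hence there is $\delta>0$ with $g'(t)\neq 0$ for $0<|t|\leq\delta$. For $\varepsilon>0$ small enough $\mathcal{C}_\varepsilon$ lies in the $\delta$-ball around $O$, so every intersection point $(0,t)$ of $\mathcal{C}_\varepsilon$ with the $y$-axis satisfies $0<|t|\leq\delta$, hence $\frac{\partial f}{\partial y}(0,t)=g'(t)\neq 0$, i.e. $(0,t)\notin\Gamma(f,x)$, and no vertical tangent to $\mathcal{C}_\varepsilon$ passes through $O$. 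This version avoids \cite[Proposition 5.6]{So3} entirely and is arguably the shortest correct proof; it should replace your vaguer ``solutions $t$ bounded away from any critical point'' phrasing, which does not quite say what you need.
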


\begin{proof}
Let $M_0(x_0,y_0)\in\mathcal{C}_\varepsilon$ be a point such that $\mathcal{C}_\varepsilon$ has a vertical tangent at $M_0$ and such that this tangent passes through the origin. Hence the equation of the tangent is $x_0\frac{\partial f}{ \partial x}(x_0,y_0)+y_0\frac{\partial f}{ \partial y}(x_0,y_0)=0$. In other words, the vectors $\overrightarrow{OM_0}$ and $\mathrm{grad}_f(x_0,y_0)$ are perpendicular, since their scalar product is zero. Since the tangent is vertical, the vector $\mathrm{grad}_f(x_0,y_0)$ is horizontal, thus $\overrightarrow{OM_0}$ is a vertical vector. 

Since $M_0$ was arbitrarily chosen, we conclude that there exists a sequence of points $M_i$ that satisfy this property, namely that the $Oy$-axis is included in the polar curve\index{polar curve} $\Gamma(f,x).$ By Proposition \cite[Proposition 5.6]{So3}), we obtain a contradiction. 
\end{proof}

\begin{proposition}\label{prop:red}
If the polar curve of $f$ with respect to $x$, that is $\Gamma(f,x)$, is reduced, then there are no vertical inflection tangents to the curve $\mathcal{C}_\varepsilon$, for $\varepsilon>0$ sufficiently small.\index{polar curve}
\end{proposition}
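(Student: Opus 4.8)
The plan is to convert \enquote{vertical inflectional tangency} into an incidence of $\mathcal{C}_\varepsilon$ with the polar curve, and then to run an asymptotic argument. I read the statement as: \emph{for $\varepsilon>0$ small enough, no inflection point of $\mathcal{C}_\varepsilon$ has a vertical tangent line}; I would prove this by contradiction.

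So suppose that for arbitrarily small $\varepsilon>0$ the curve $\mathcal{C}_\varepsilon$ has an inflection point with a vertical tangent. Fix such an $\varepsilon$, small enough that $\mathcal{C}_\varepsilon$ is a smooth Jordan curve (\cite[Lemma 5.3]{So3}) contained in a good neighbourhood $V$ of $O$ (Definition \ref{def:GoodNeighb}), and let $p=(x_0,y_0)\in\mathcal{C}_\varepsilon$ be such an inflection point. As $\mathcal{C}_\varepsilon$ is smooth, $\nabla f(p)\neq 0$; verticality of the tangent at a regular point means $\frac{\partial f}{\partial y}(p)=0$ and $\frac{\partial f}{\partial x}(p)\neq 0$, so $p\in\Gamma(f,x)$. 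By the implicit function theorem, near $p$ the curve $\mathcal{C}_\varepsilon$ is a graph $x=x(y)$ with $f(x(y),y)\equiv\varepsilon$. Differentiating once gives $\frac{\partial f}{\partial x}(p)\,x'(y_0)+\frac{\partial f}{\partial y}(p)=0$, so $x'(y_0)=0$ (this expresses verticality); differentiating a second time and using $x'(y_0)=0$ gives $\frac{\partial f}{\partial x}(p)\,x''(y_0)+\frac{\partial^2 f}{\partial y^2}(p)=0$. Since $p$ is an inflection point, $x''(y_0)=0$, hence $\frac{\partial^2 f}{\partial y^2}(p)=0$. Thus $p$ lies in the real algebraic set $\Gamma(f,x)\cap\big\{\frac{\partial^2 f}{\partial y^2}=0\big\}$.

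The core step is to show that, when $\Gamma(f,x)$ is reduced, this intersection is finite near $O$. Put $g:=\frac{\partial f}{\partial y}$ and $g_y:=\frac{\partial g}{\partial y}=\frac{\partial^2 f}{\partial y^2}$; reducedness of $\Gamma(f,x)$ means $g$ is squarefree in $\mathbb{R}[x,y]$, so $g=c\,q_1\cdots q_r$ with $c\in\mathbb{R}\setminus\{0\}$ and $q_1,\dots,q_r$ distinct irreducibles, and $\Gamma(f,x)=\bigcup_i\{q_i=0\}$. Fix $i$. If $q_i(O)\neq 0$, the closed set $\{q_i=0\}$ avoids $O$, hence avoids a ball around $O$. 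If $q_i(O)=0$, then $q_i$ genuinely involves $y$: otherwise $\{q_i=0\}$ would be a union of vertical lines one of which is the $Oy$-axis, contradicting $Oy\not\subseteq\Gamma(f,x)$ (the Proposition just above, via \cite[Proposition 5.6]{So3}); therefore $q_{i,y}:=\partial q_i/\partial y\neq 0$ with $\deg_y q_{i,y}<\deg_y q_i$, so the prime $q_i$ does not divide $q_{i,y}$. If in addition $q_i\mid g_y$, then writing $g=q_i h$ we get $g_y=q_{i,y}h+q_i\,\partial h/\partial y$, so $q_i\mid q_{i,y}h$, hence $q_i\mid h$, hence $q_i^2\mid g$ --- impossible. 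So $q_i\nmid g_y$, and $\{q_i=0\}\cap\{g_y=0\}$ is finite. In either case $\{q_i=0\}\cap\{g_y=0\}$ meets a fixed ball around $O$ in a finite set; taking the union over $i$, shrinking $V$ accordingly (and further so that $f>0$ on $V\setminus\{O\}$, possible since $O$ is a strict local minimum with $f(O)=0$), and discarding $O$ if present, we obtain $\Gamma(f,x)\cap\{g_y=0\}\cap(V\setminus\{O\})=\{P_1,\dots,P_k\}$, a finite set with every $P_j\neq O$ and $f(P_j)>0$.

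Finally, let $\varepsilon_0$ be the smallest of $f(P_1),\dots,f(P_k)$, so $\varepsilon_0>0$ (with $\varepsilon_0:=+\infty$ if $k=0$). For $0<\varepsilon\ll 1$ one has $\mathcal{C}_\varepsilon\subset V$ (the disks $\mathcal{D}_\varepsilon$ shrink to $\{O\}$) and $\varepsilon<\varepsilon_0$; the point $p$ produced above then lies in $\Gamma(f,x)\cap\{g_y=0\}\cap(V\setminus\{O\})=\{P_1,\dots,P_k\}$, so $\varepsilon=f(p)\in\{f(P_1),\dots,f(P_k)\}$, forcing $\varepsilon\ge\varepsilon_0$ --- a contradiction. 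Hence no such $p$ exists, which is the assertion. I expect the main obstacle to be the algebraic step of the third paragraph: extracting from \enquote{$\Gamma(f,x)$ is reduced} that $\frac{\partial^2 f}{\partial y^2}$ cannot vanish along a branch of the polar curve at the origin, and cleanly treating the degenerate irreducible factors (vertical lines, and factors vanishing off $O$). The surrounding asymptotic bookkeeping --- smoothness of $\mathcal{C}_\varepsilon$ and $\mathcal{D}_\varepsilon$ shrinking to $\{O\}$ --- is routine within the framework of \cite{So3}.
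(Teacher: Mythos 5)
Your proof is correct, and the geometric pivot is the same as the paper's: a vertical inflectional tangency to $\mathcal{C}_\varepsilon$ at a point $p=(x_0,y_0)$ forces both $\frac{\partial f}{\partial y}(p)=0$ and $\frac{\partial^2 f}{\partial y^2}(p)=0$, i.e.\ $y_0$ is a multiple root of $y\mapsto\frac{\partial f}{\partial y}(x_0,y)$. The paper's proof then argues informally that such inflections, occurring for arbitrarily small $\varepsilon$, produce a \enquote{family of double roots} and hence a non-reduced factor of $\Gamma(f,x)$; it does not explain why these inflection points must trace out a curve rather than, say, an infinite discrete set accumulating at $O$. You close exactly this gap by running the implication in the other direction: squarefreeness of $g:=\frac{\partial f}{\partial y}$ together with the identity $g_y=q_{i,y}h+q_i\,\partial h/\partial y$ shows that no irreducible factor $q_i$ of $g$ passing through $O$ can divide $g_y$, so $\Gamma(f,x)\cap\{\frac{\partial^2 f}{\partial y^2}=0\}$ is finite in a punctured good neighbourhood of $O$, and taking $\varepsilon_0$ below the minimum of $f$ over this finite set yields the asymptotic conclusion directly. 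You also treat the degenerate cases (factors $q_i$ not vanishing at $O$, and factors free of $y$, ruled out via the preceding proposition that $Oy\not\subset\Gamma(f,x)$) that the paper's sketch leaves tacit. So the two proofs share the same starting observation, but yours supplies the algebraic finiteness step that the published version only gestures at, and is the more self-contained of the two.
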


\begin{proof}
Fix $x_0>0$. The vertical tangent at the point $(x_0,y_0,0)$ to $\mathcal{C}_\varepsilon$, in the $xOy$ plane, if it exists, is the projection of the horizontal tangent at the point $(x_0,y_0,\varepsilon)$ to the graph of the one variable polynomial $f(x_0,y)$, in the $(x=x_0)$ plane. See Figure \ref{fig:inflection3D}.

{\centering
\includegraphics[scale=0.2]{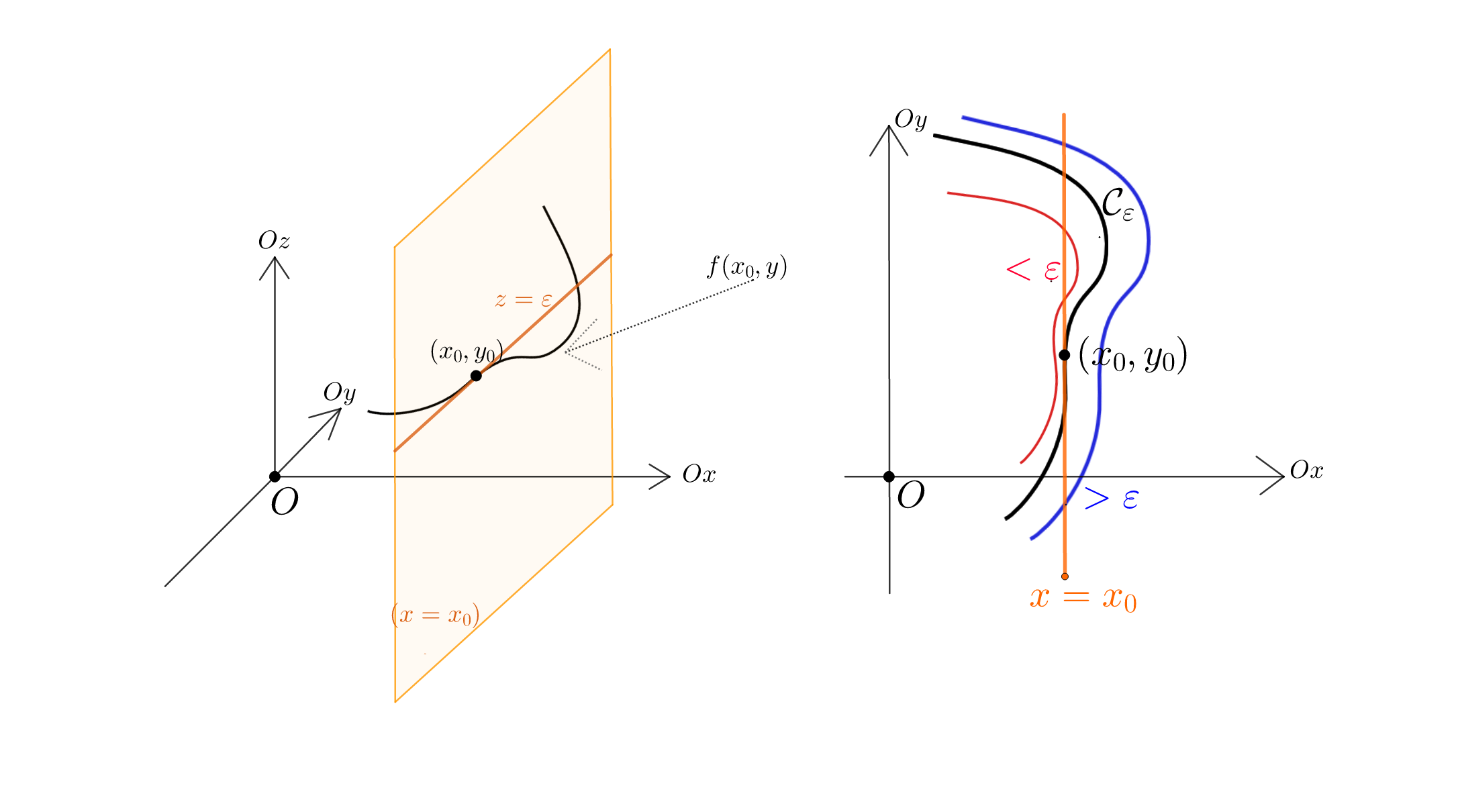} 
\captionof{figure}{An inflection point of the univariate polynomial $f(x_0,y),$ corresponding to a vertical inflection tangent of $\mathcal{C}_\varepsilon$.\label{fig:inflection3D}}}
\vspace{10pt}

For fixed $x_0>0$, we have $(x_0,y_0)\in\mathcal{C}_\varepsilon,$ that is $f(x_0,y_0)=\varepsilon.$ Suppose that $(x_0,y_0)$ is a vertical inflection point. Hence:

-if $y<y_0$, then $f(x_0,y)>\varepsilon;$

-if $y>y_0$, then $f(x_0,y)<\varepsilon$

(or its symmetric situation, that can be treated similarly).

This means that on the surface $\mathrm{graph}(f):=\{(x,y,z)\in\mathbb{R}^3\mid z=f(x,y)\}$ we have the corresponding two cases:

-if $y<y_0$, the graph of the one variable polynomial $f(x_0,y)$ is above $z=\varepsilon$;

-if $y>y_0$, the graph of the one variable polynomial $f(x_0,y)$ is below $z=\varepsilon$.

In other words, for a fixed sufficiently small $x_0>0$, the vertical inflection points of $\mathcal{C}_\varepsilon$ correspond to the inflection points of the one variable polynomial $f(x_0,y).$ This is due to the fact that in the univariate case, the polynomial $f(x_0,y)$ has an inflection point $(x_0,y_0)$ if and only if $(x_0,y_0)$ is a double root of the derivative of $f(x_0,y),$ that is a double root of the equation $\frac{\partial f}{\partial y}(x_0,y)=0.$

Since the above mentioned is true for any $x_0>0$ small enough, this means that there is a family of double roots of $\frac{\partial f}{\partial y}(x,y)=0.$ In other words, the equation of the polar curve has factors with multiplicity greater than one, thus the polar curve is not reduced.

Contradiction.
\end{proof}

\begin{proposition}\label{prop:homeo}
Let us consider the polar curve of $f$ with respect to $x$, that is $\Gamma(f,x),$ and the map $\phi:\mathbb{R}^2_{x,y}\rightarrow\mathbb{R}^2_{x,z},$ given by $\phi(x,y):=(x,f(x,y)).$ If the restriction of the map $\phi$ to the polar curve is a homeomorphism onto its image $\phi(\Gamma)$, then there are no vertical bitangents to the curve  $\mathcal{C}_\varepsilon$, for $\varepsilon>0$ sufficiently small.
\end{proposition}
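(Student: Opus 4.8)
The plan is a short argument by contradiction that only uses the injectivity contained in the homeomorphism hypothesis. Suppose, for some $\varepsilon>0$ small enough that $\mathcal{C}_\varepsilon$ is a smooth Jordan curve (\cite[Lemma 5.3]{So3}), that $\mathcal{C}_\varepsilon$ has a vertical bitangent, i.e. a vertical line $\{x=x_0\}$ tangent to $\mathcal{C}_\varepsilon$ at two \emph{distinct} points $P_1=(x_0,y_1)$ and $P_2=(x_0,y_2)$, with $y_1\neq y_2$. Since $\mathcal{C}_\varepsilon$ lies in the smooth part of the level set $(f=\varepsilon)$, the tangent line to $\mathcal{C}_\varepsilon$ at each $P_j$ is orthogonal to $\mathrm{grad}_f(P_j)$.

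First I would convert the geometric condition into membership in the polar curve. The tangent at $P_j$ being vertical forces $\mathrm{grad}_f(P_j)$ to be horizontal, hence $\frac{\partial f}{\partial y}(P_j)=0$; by Definition \ref{DefPolCurve} this is exactly the statement $P_1,P_2\in\Gamma(f,x)$. At the same time $P_1,P_2\in\mathcal{C}_\varepsilon$ gives $f(P_1)=f(P_2)=\varepsilon$.

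Then the contradiction is immediate: $\phi(P_1)=\bigl(x_0,f(P_1)\bigr)=(x_0,\varepsilon)=\bigl(x_0,f(P_2)\bigr)=\phi(P_2)$, yet $P_1\neq P_2$ and both points lie on $\Gamma(f,x)$. So the restriction $\phi|_{\Gamma(f,x)}$ fails to be injective, contradicting the hypothesis that it is a homeomorphism onto its image $\phi(\Gamma)$. Hence no such $\varepsilon$ and no such vertical bitangent can exist, which is the assertion.

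I do not expect any real obstacle here: the content is entirely in phrasing the correct hypothesis, which is already done in the statement. The only steps that deserve a line of justification are that the two tangency points are honest smooth points of $\mathcal{C}_\varepsilon$ — so that ``vertical tangent'' is equivalent to the vanishing of $\partial f/\partial y$, which is precisely why $\varepsilon$ must be small — and that $P_1$ and $P_2$ are genuinely distinct elements of the polar curve, so that the failure of injectivity is real. If desired, one could also note, using the previous two propositions of this subsection, that $x_0\neq 0$, i.e. the bitangent is not the $Oy$-axis, but this plays no role in the argument.
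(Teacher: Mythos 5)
Your argument is correct and is essentially the paper's own proof: a vertical tangent at a point of $\mathcal{C}_\varepsilon$ forces $\partial f/\partial y$ to vanish there (so the point lies on $\Gamma(f,x)$), and a vertical bitangent then yields two distinct points of $\Gamma(f,x)$ sharing the same $x$-coordinate and the same $f$-value, hence identified by $\phi$, contradicting injectivity. You even make explicit the step (vertical tangent $\Leftrightarrow$ $\partial f/\partial y=0$) that the paper leaves implicit by jumping directly to ``two different polar branches.''
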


\begin{remark}
If the restriction of $\phi$ to $\Gamma$ is a homeomorphism onto $\phi(\Gamma)$, then any two distinct polar branches of $\Gamma$ have different images by $\phi$.
\end{remark}

\begin{proof}
Suppose that there exist vertical bitangents to $\mathcal{C}_\varepsilon$, i.e. there exists $x_0\in\mathbb{R},$ $x_0>0$ sufficiently small and there exist $y_1\neq y_2$ such that $f(x_0,y_1)=f(x_0,y_2)$, with vertical tangent both at $(x_0,y_1)$ and at $(x_0,y_2)$. To be more precise, this means that there exist two different polar branches $\gamma_1\neq\gamma_2$, with $(x_0,y_1)\in\gamma_1$, $(x_0,y_2)\in\gamma_2$ such that $(x_0,y_1)\neq (x_0,y_2)$ (see Figure \ref{fig:homeo}), and $\phi(x_0,y_1)=\phi(x_0,y_2)$. Hence $\phi$ is not a homeomorphism when restricted to $\Gamma$ on its image. 
Contradiction.
\end{proof}

{\centering\vspace{10pt}
\includegraphics[scale=0.17]{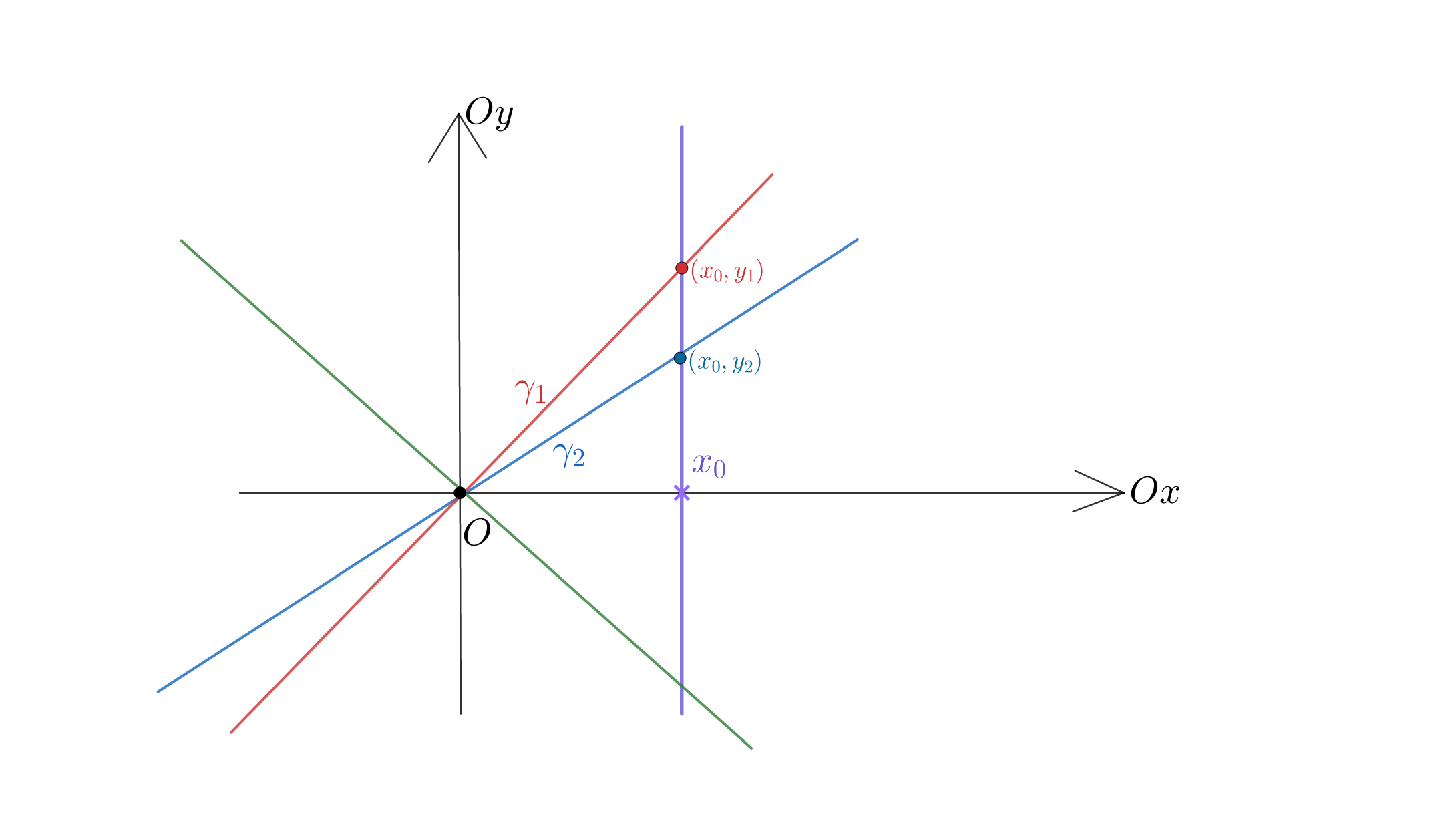} 
\captionof{figure}{Two different polar branches $\gamma_1\neq\gamma_2$, with $(x_0,y_1)\in\gamma_1$, $(x_0,y_2)\in\gamma_2$ such that $(x_0,y_1)\neq (x_0,y_2)$.\label{fig:homeo}}}
\vspace{10pt}

\section{Generic asymptotic Poincaré-Reeb trees}\label{sect:genAsyPRtrees}
Combinatorial objects are often used to encode the topological type of complex curve singularities: labeled trees (such as \emph{the Kuo-Lu tree}, \emph{the Eggers-Wall tree}, \emph{weighted dual graphs}; see \cite{MR3999063}, \cite{PPP19}), or polyhedral tools (such as \emph{the Newton polygon} used by Bodin in \cite{MR2129310}, \cite{MR2344205}). Another very useful simplicial complex called \emph{the lotus of a complex curve singularity} was first introduced by Popescu-Pampu in \cite{MR2920731} (see also \cite{Cas}).

In the real setting, combinatorial data of real curve singularities can be stored by plane trees such as the \emph{contact tree}. Such trees are completely described by Ghys in his recent book \cite{Gh1}, in terms of \emph{separable permutations} (see \cite{Gh2}), via the possible local configurations of any family $\{a_i(x)\}_i$ of real polynomials in one variable in a small enough neighbourhood of the origin of the real plane.

The graphs of these polynomials change their relative positions when they pass through a common zero (see Figure \ref{fig:ctcEx}), giving rise to permutations. Ghys proved that the permutations that can be obtained in this manner are exactly the so-called \enquote{separable} permutations.

{\centering\vspace{10pt}
\includegraphics[scale=1.2]{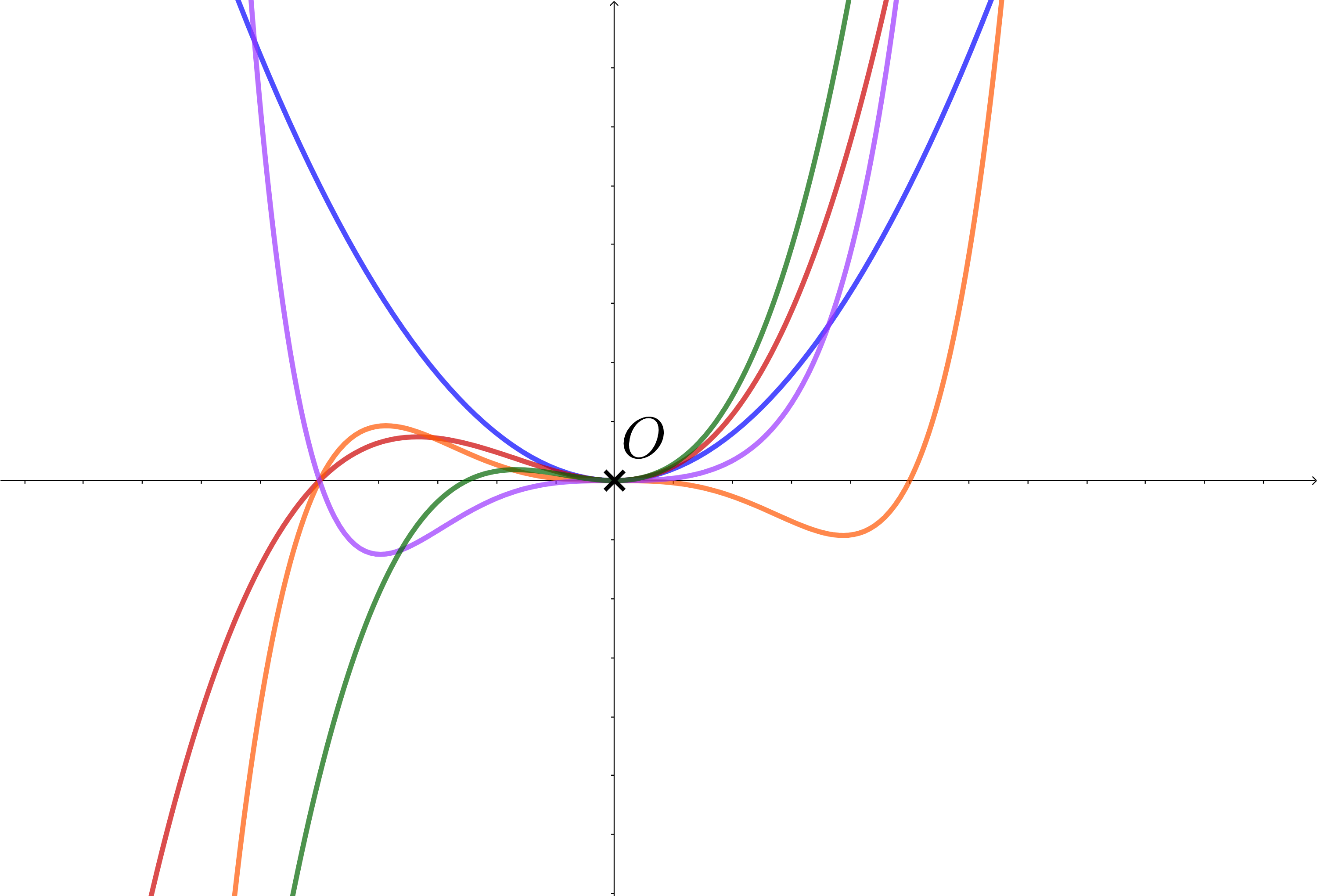} 
\captionof{figure}{The graphs of the polynomials $a_i(x)$ in a neighbourhood of the origin.\label{fig:ctcEx}}}
\vspace{10pt}

While the curves studied by Ghys were the zero locus of a bivariate real polynomial function, in this paper we consider a polynomial with the zero locus reduced to a point (at least in a small enough neighbourhood of the origin). We are interested in the shapes of the nearby level curves, which are called the real Milnor fibres of the polynomial at the origin.

Sorea introduced and described  the \emph{asymptotic Poincaré-Reeb trees} in \cite[Section 4]{So3} (see also \cite{So1}). Their purpose is to measure the non-convexity of real algebraic curves near strict local minima. 

Let us briefly recall the construction of the Poincaré-Reeb tree associated to a smooth, compact connected component of a real plane algebraic curve $\mathcal{C}_\varepsilon$ and to a chosen projection direction $\Pi:\mathbb{R}^2\rightarrow\mathbb{R},$ $\Pi(x,y):=x$. For a brief introduction in the standard vocabulary related to graphs and trees, we refer the reader to \cite[Subsection 2.1]{So2}.

 Denote by $\mathcal{D}_\varepsilon$ the topological disk bounded by $\mathcal{C}_\varepsilon$. Consider the following equivalence relation in $\mathbb{R}^2$: two points of $\mathcal{D}_\varepsilon$ are equivalent if they belong to the same connected component of a fibre of the projection $\Pi$. By taking the quotient map, we construct an object called the Poincaré-Reeb graph, which is a special type of plane tree (see Figure \ref{fig:1Coste}). In particular, its vertices are endowed with a total preorder induced by $x$ (see \cite[Corollary 4.21]{So3}). In the asymptotic case, that is, for small enough level curves near a strict local minimum at the origin, on each geodesic starting from the root (i.e. the image of the origin), this preorder is strictly monotone (\cite[Theorem 5.31]{So3}). This is what we call the asymptotic Poincaré-Reeb tree.

{\centering\vspace{10pt}
\includegraphics[scale=0.1]{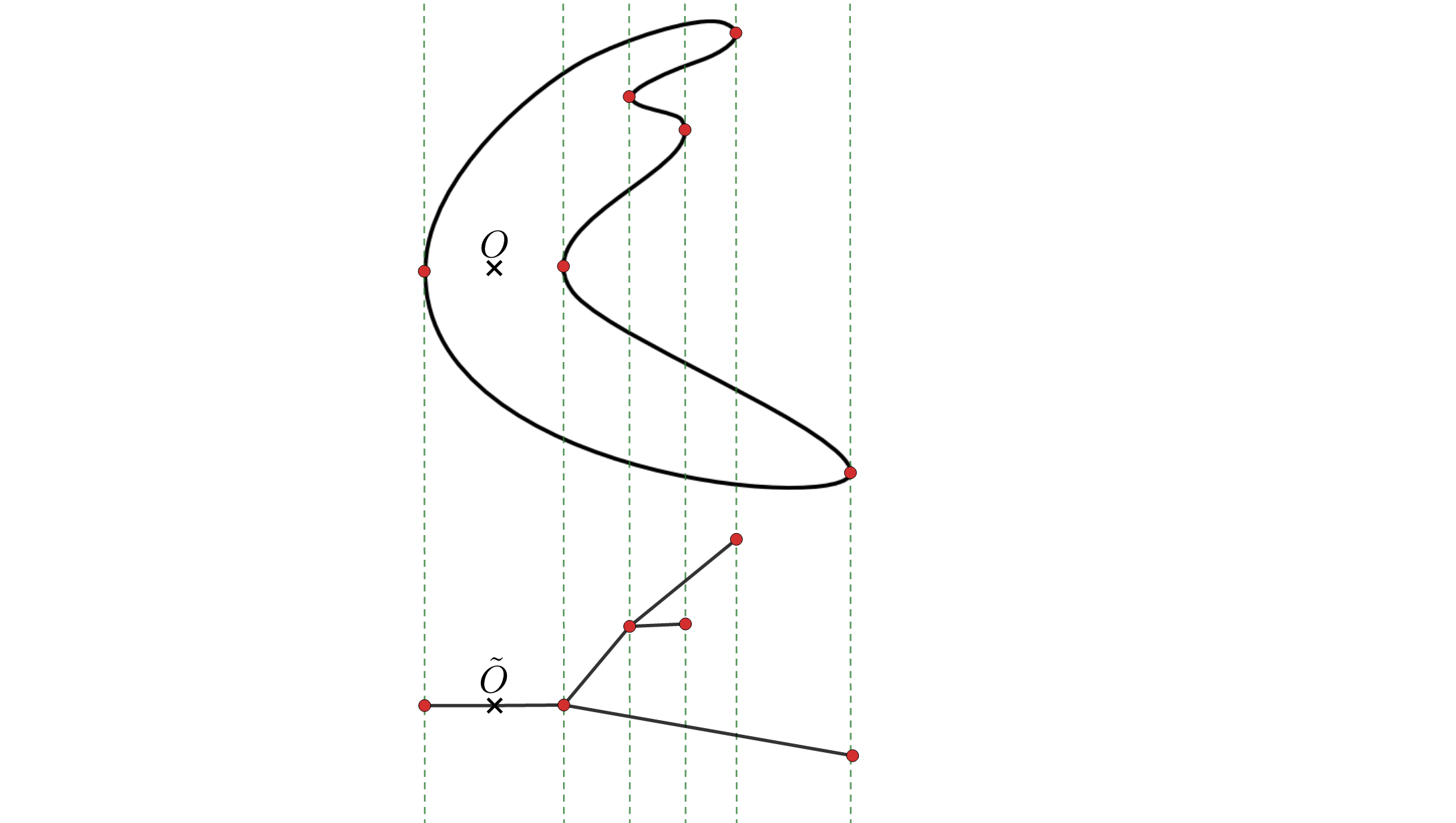}
\captionof{figure}{The Poincaré-Reeb tree associated to a smooth, compact connected component of a level curve near a strict minimum at the origin, with respect to the vertical projection.\label{fig:1Coste}}}
\vspace{10pt}

\subsection{The properties of a generic asymptotic Poincaré-Reeb tree}\label{subs:genAsPRT}
The aim of this section is to study the properties of asymptotic Poincaré-Reeb trees under the assumption that the direction of projection is generic with respect to $\mathcal{C}_\varepsilon$ (in the sense of Definition \ref{DefGenericDirection}).

\begin{definition}\label{def:genericAsymptPReeb}
If the direction of projection $x$ is generic, then the asymptotic Poincaré-Reeb tree is called \defi{a generic asymptotic Poincaré-Reeb tree} of $\mathcal{C}_\varepsilon$, with respect to the chosen direction.
\end{definition}

\begin{proposition}\label{cor:redus}
If the polar curve is reduced, then the asymptotic Poincaré-Reeb tree has no vertices of valency equal to two, except for the root.
\end{proposition}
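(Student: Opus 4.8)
The plan is to connect vertices of valency two in the Poincaré-Reeb tree to vertical tangencies of $\mathcal{C}_\varepsilon$ that are not extrema of the projection $\Pi$, i.e. to inflection points of the univariate slices $f(x_0, y)$, and then to invoke Proposition \ref{prop:red} (which says that reducedness of the polar curve forbids vertical inflection tangents for $\varepsilon$ small enough). First I would recall how vertices of the Poincaré-Reeb graph arise: a vertex is the equivalence class of a point of $\mathcal{D}_\varepsilon$ where a connected component of a fibre of $\Pi$ degenerates, which happens precisely at the critical points of $\Pi|_{\mathcal{C}_\varepsilon}$, that is, at the points of $\mathcal{C}_\varepsilon \cap \Gamma(f,x)$. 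The valency of such a vertex counts the number of local branches of the quotient graph emanating from it, which in turn is governed by how the fibres of $\Pi$ near the critical value split on either side of the corresponding point of the curve.

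Next I would do the local analysis at a point $P=(x_0,y_0)\in\mathcal{C}_\varepsilon\cap\Gamma(f,x)$. Restricting to the line $x=x_0$, the function $y\mapsto f(x_0,y)$ has a critical point at $y_0$ (vertical tangent). If this critical point is a strict local extremum of $f(x_0,\cdot)$, then for $x$ slightly on one side of $x_0$ the curve $\mathcal{C}_\varepsilon$ has two nearby points and on the other side none (a "turning point"), so in the quotient the vertex has one adjacent edge if it is a leaf of the tree, or in general a local extremum of the preorder; by the strict monotonicity of the preorder on geodesics from the root (\cite[Theorem 5.31]{So3}), such a vertex is an endpoint of exactly one edge going "outward" and possibly one going "inward" — here I must be careful to argue that a valency-two non-root vertex would force the slice $f(x_0,\cdot)$ to have an inflection at $y_0$ rather than a genuine extremum, because otherwise the two fibre-components on one side would have to merge into one on the other side while the curve continues on both sides, which is exactly the picture of a vertical inflection tangent. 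Then I would observe that $y_0$ is an inflection point of $f(x_0,\cdot)$ iff $\frac{\partial f}{\partial y}(x_0,\cdot)$ has a root of multiplicity $\geq 2$ at $y_0$, i.e. iff $P$ lies on a non-reduced component of $\Gamma(f,x)$; this is precisely the contradiction supplied by the hypothesis, via the argument already run in Proposition \ref{prop:red}.

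Concretely, I would structure it as: (1) a valency-two vertex of the asymptotic Poincaré-Reeb tree that is not the root corresponds to a point $P\in\mathcal{C}_\varepsilon$ at which $\Pi|_{\mathcal{C}_\varepsilon}$ has a critical point that is neither a source nor a sink for the fibre-components, but through which exactly one component of a fibre passes on each side — equivalently, the local model of $\Pi|_{\mathcal{C}_\varepsilon}$ near $P$ is $y\mapsto y^3$ rather than $y\mapsto y^2$; (2) translate this into: $y_0$ is a degenerate-but-non-extremal critical point of $f(x_0,\cdot)$, hence an inflection point; (3) conclude as in Proposition \ref{prop:red} that for small enough $\varepsilon$ a whole one-parameter family of such inflections exists, forcing $\frac{\partial f}{\partial y}$ to have a multiple factor, contradicting reducedness of $\Gamma(f,x)$. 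The main obstacle I anticipate is step (1): making rigorous the identification of "valency two, non-root" with "cubic local model / vertical inflection tangent" using the precise definition of the Poincaré-Reeb graph and the established total preorder on its vertices — in particular ruling out the degenerate possibility that a valency-two vertex could come from several distinct critical points of $\Pi|_{\mathcal{C}_\varepsilon}$ lying in the same fibre, which one handles by first choosing the direction generic (no vertical bitangents, by Proposition \ref{prop:homeo}) so that each fibre of $\Pi$ meets $\Gamma(f,x)\cap\mathcal{C}_\varepsilon$ in at most one point, or by noting that the asymptotic tree's vertices are already known to be in bijection with single critical points for $\varepsilon\ll 1$.
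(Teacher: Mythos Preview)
Your approach is the same as the paper's, just spelled out in more detail. The paper's proof is two lines: invoke Proposition~\ref{prop:red} (reduced polar curve $\Rightarrow$ no vertical inflection tangents for $\varepsilon\ll 1$) and then cite the construction of the Poincaré--Reeb tree from \cite[Section~4]{So3} to get the implication ``no vertical inflection tangent $\Rightarrow$ no non-root vertex of valency~$2$''. You are unpacking exactly this second implication, which the paper leaves implicit.

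One remark on your final obstacle: the worry that a valency-two vertex might come from two distinct critical points of $\Pi|_{\mathcal{C}_\varepsilon}$ in the same fibre (a vertical bitangent) is unnecessary, and your proposed fix --- invoking Proposition~\ref{prop:homeo} --- is not available here, since Proposition~\ref{cor:redus} assumes only that the polar curve is reduced, not the homeomorphism condition. Fortunately you do not need it. A vertical bitangent produces a vertex of valency at least~$4$, not~$2$: locally near each of the two tangency points the number of fibre components changes by one, so the net effect at the common vertex is a change by two (see the paper's own bitangent example in Figure~\ref{fig:NotGenericEx}, where a valency-$4$ vertex appears). Hence valency exactly~$2$ at a non-root vertex forces a single tangency point at which the number of nearby fibre components is one on both sides --- precisely your cubic local model, i.e.\ a vertical inflection tangent. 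With that observation your step~(1) goes through without any extra hypothesis, and the rest is exactly Proposition~\ref{prop:red}.
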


\begin{proof}
It follows from Proposition \ref{prop:red} and from the construction of the  Poincaré-Reeb tree (\cite[Section 4]{So3}).
\end{proof}

\begin{proposition}\label{cor:homeo}
Let us consider the polar curve $\Gamma(f,x),$ and the map $\phi:\mathbb{R}^2_{x,y}\rightarrow\mathbb{R}^2_{x,z}$, given by $\phi(x,y):=(x,f(x,y)).$ If the restriction of the map $\phi$ to the polar curve is a homeomorphism on the image $\phi(\Gamma)$, then the asymptotic Poincaré-Reeb tree has no vertices of valency strictly bigger than $3$.
\end{proposition}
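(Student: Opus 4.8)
The plan is to mirror the proof of Proposition \ref{cor:redus}: translate a bound on the valency of a vertex of the asymptotic Poincaré-Reeb tree into a statement about vertical tangents of $\mathcal{C}_\varepsilon$, and then invoke Proposition \ref{prop:homeo} together with the construction of the tree from \cite[Section 4]{So3}. First I would recall how valency is read off from the geometry: a vertex $v$ of the Poincaré-Reeb tree corresponds to a connected component of a fibre $\Pi^{-1}(x_0)\cap\mathcal{D}_\varepsilon$, i.e. to a vertical segment (possibly a point) on the line $x=x_0$ inside the disk, and the valency of $v$ counts the local branches of the quotient near $v$, equivalently the number of connected components of $\Pi^{-1}(x_0\pm\eta)\cap\mathcal{D}_\varepsilon$ adjacent to that segment for $0<\eta\ll 1$. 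The endpoints of such a vertical segment lie on $\mathcal{C}_\varepsilon$, and a change in the number of adjacent fibre components forces the boundary $\mathcal{C}_\varepsilon$ to have a vertical tangent there, hence a point of the polar curve $\Gamma(f,x)$ on the line $x=x_0$.

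The key combinatorial step is the bookkeeping at a vertex. In the asymptotic setting, by \cite[Theorem 5.31]{So3} the edges are monotone along geodesics from the root, so near an interior vertex $v$ sitting over $x=x_0$ there is exactly one edge coming from the side $x<x_0$ (the side of the root) and the remaining $k$ edges leaving on the side $x>x_0$; thus the valency of $v$ is $k+1$. Each of those $k$ outgoing edges corresponds to a distinct connected component of $\Pi^{-1}(x_0+\eta)\cap\mathcal{D}_\varepsilon$ that merges into the single segment over $x_0$, and each such merging is realised by a local maximum (in the $x$-direction, restricted to $\mathcal{C}_\varepsilon$) of a boundary arc, i.e. by a point where $\mathcal{C}_\varepsilon$ has a vertical tangent, lying over $x=x_0$. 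Hence a vertex of valency $\geq 4$ produces at least $k\geq 3$ points of $\Gamma(f,x)$ on a single vertical line $x=x_0$, with $x_0>0$ arbitrarily small as $\varepsilon\to 0$, which means at least three distinct polar half-branches pass through the line $x=x_0$ at three distinct heights. Among these, at least two bound the disk from the same side, so the corresponding boundary arcs of $\mathcal{C}_\varepsilon$ have a common vertical tangent line $x=x_0$ — i.e. a vertical bitangent to $\mathcal{C}_\varepsilon$. (One should be slightly careful to distinguish the case $k=2$, valency $3$, which is allowed, from $k\geq 3$, valency $\geq 4$, which is the one we exclude; only the latter forces two outgoing branches to be "parallel" with respect to the projection in the way that yields a bitangent.)

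Having extracted a vertical bitangent to $\mathcal{C}_\varepsilon$ for all sufficiently small $\varepsilon>0$ from the existence of a vertex of valency $\geq 4$, I would then apply Proposition \ref{prop:homeo}: under the hypothesis that $\phi|_\Gamma$ is a homeomorphism onto $\phi(\Gamma)$, there are no vertical bitangents to $\mathcal{C}_\varepsilon$ for $\varepsilon>0$ small enough, a contradiction. Therefore every vertex of the asymptotic Poincaré-Reeb tree has valency at most $3$. I would phrase this last part exactly as in the proof of Proposition \ref{cor:redus}, namely "it follows from Proposition \ref{prop:homeo} and from the construction of the Poincaré-Reeb tree (\cite[Section 4]{So3})," perhaps expanded by a sentence recording the valency-to-bitangent translation sketched above.

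The main obstacle I anticipate is making the valency counting rigorous and matching it precisely to the definition of bitangent in Definition \ref{DefGenericDirection}: one must check that the two outgoing branches producing the excess valency genuinely give two distinct points of tangency with the \emph{same} vertical line (rather than, say, a single higher-order tangency or a self-intersection of the tree that does not occur), and one must use the homeomorphism hypothesis in the form noted in the Remark after Proposition \ref{prop:homeo} — distinct polar branches have distinct images under $\phi$ — to rule out the degenerate configurations. The monotonicity result \cite[Theorem 5.31]{So3} and Proposition \ref{cor:redus} (already giving valency $\neq 2$ away from the root) do most of the structural work; the residual task is the careful local picture at a putative valency-$\geq 4$ vertex.
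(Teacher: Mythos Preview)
Your approach is exactly the paper's: the published proof is the one-liner ``It follows from Proposition \ref{prop:homeo} and from the construction of the Poincaré-Reeb tree (\cite[Section 4]{So3}),'' and your proposal simply unpacks that line by translating valency $\geq 4$ into the existence of a vertical bitangent and then invoking Proposition \ref{prop:homeo}. One minor slip in your bookkeeping: a vertex with one incoming edge and $k$ outgoing edges forces $k-1$ (not $k$) vertical tangencies on the corresponding fibre segment, but since $k-1\geq 2$ whenever the valency $k+1\geq 4$, this does not affect the argument, and the extra ``same side'' reasoning is unnecessary---any two such tangency points already share the tangent line $x=x_0$.
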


\begin{proof}
It follows from Proposition \ref{prop:homeo} and from the construction of the Poincaré-Reeb tree (\cite[Section 4]{So3}).
\end{proof}

\begin{example}\label{ex:reebExample}
In Figure \ref{fig:reeb} we show an example of a generic asymptotic Poincaré-Reeb tree.

{\centering\vspace{10pt}
\includegraphics[scale=0.12]{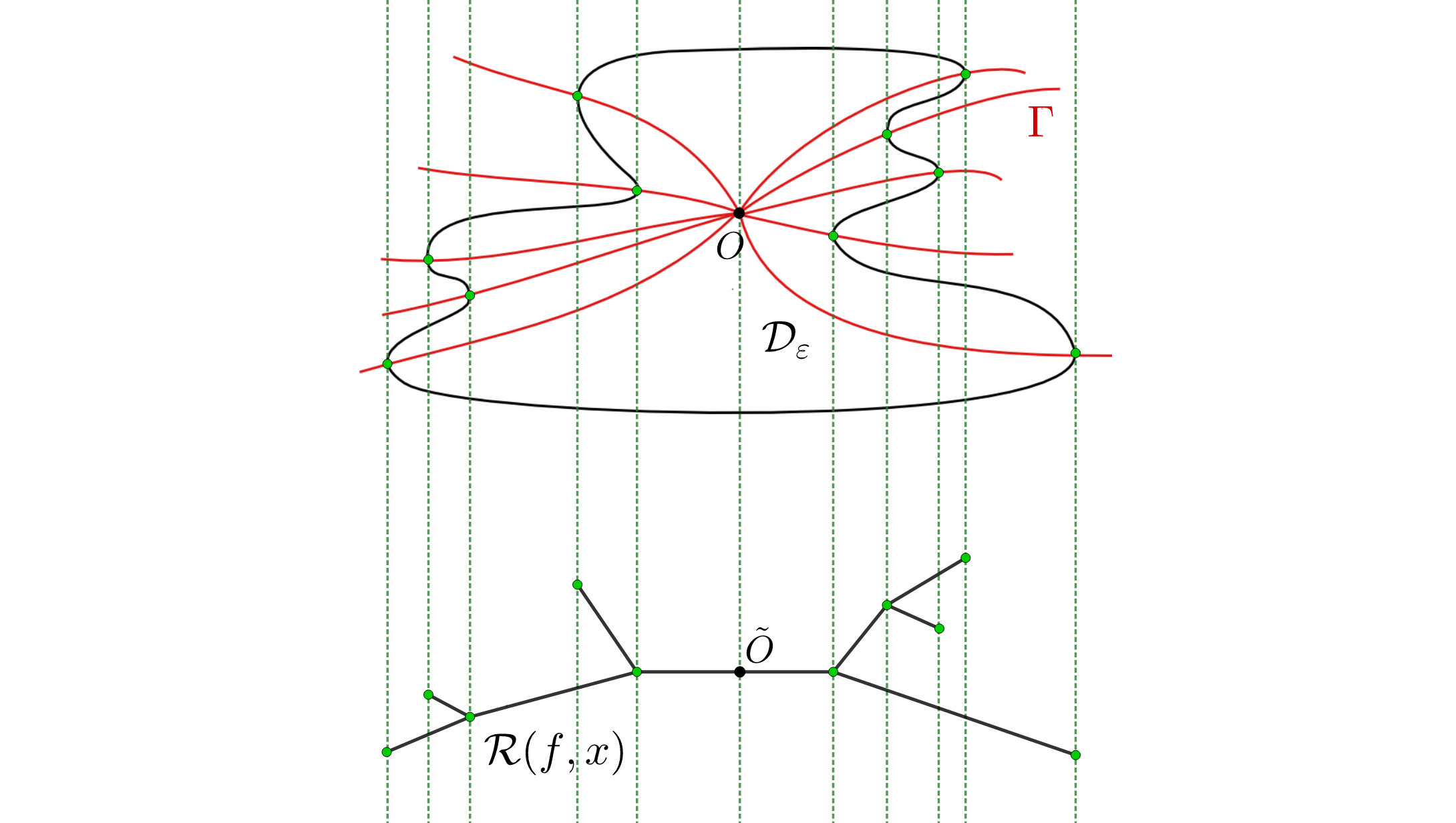} 
\captionof{figure}{A generic asymptotic Poincaré-Reeb tree.\label{fig:reeb}}}
\end{example}
\vspace{10pt}

\begin{example}
An example of two topologically inequivalent generic asymptotic Poincaré-Reeb trees is shown in Figure \ref{fig:preorderOrder1} below.

{\centering\vspace{10pt}
\includegraphics[scale=0.1, , trim={0cm 6cm 0 0cm}, clip]{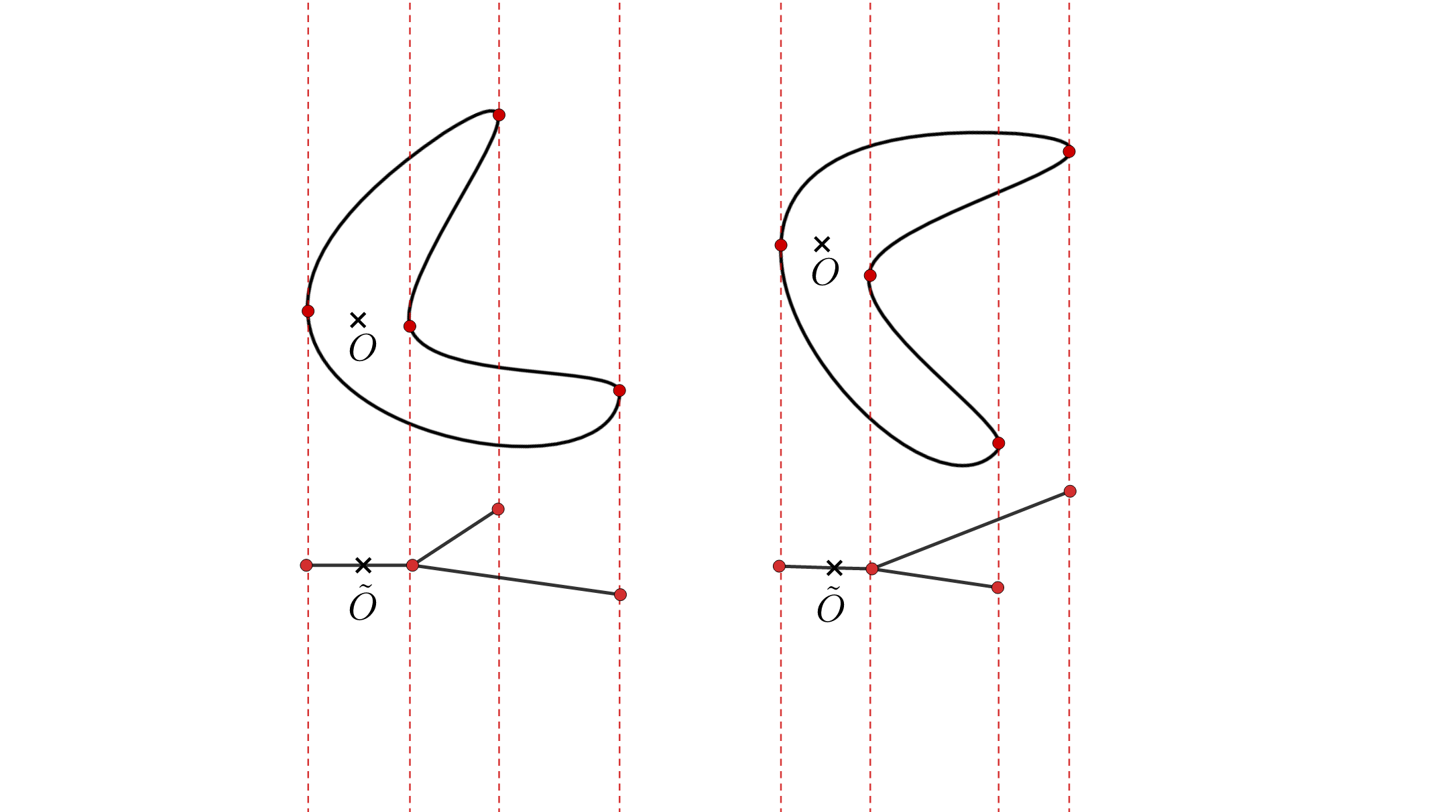} 
\captionof{figure}{Two topologically inequivalent generic asymptotic Poincaré-Reeb trees.\label{fig:preorderOrder1}}}
\vspace{10pt}

The lack of equivalence is due to the fact that in order to pass from one tree to the other one, we reach a non-generic tree, where the two leaves have the same $x-$coordinate, see \cite[Figure 4]{So3}.

\end{example}

\begin{theorem}\label{mainTh}
Let us consider the polar curve of $f$ with respect to $x$, that is $\Gamma(f,x)$, and the map $\phi:\mathbb{R}^2_{x,y}\rightarrow\mathbb{R}^2_{x,z}$, given by $\phi(x,y):=(x,f(x,y)).$

If the following two hypotheses are satisfied:

(a) the polar curve is reduced,

(b) the restriction of the map $\phi$ to the polar curve is a homeomorphism on the image $\phi(\Gamma)$,

\noindent then the generic asymptotic Poincaré-Reeb tree of $f$ relative to $x$ is a complete binary tree (i.e. every internal vertex has exactly two children) such that the preorder defined by $x$ on its vertices is a total order which is strictly monotone on the geodesics starting from the root. 
\end{theorem}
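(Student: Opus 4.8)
The plan is to assemble Theorem~\ref{mainTh} from the pieces already in place: Theorem~\ref{mainThIntro} (equivalently the to-be-proved generic structure result), Proposition~\ref{cor:redus}, Proposition~\ref{cor:homeo}, together with the monotonicity of the preorder on geodesics coming from \cite[Theorem 5.31]{So3}. The key observation is that hypotheses (a) and (b) translate, via Proposition~\ref{prop:red} and Proposition~\ref{prop:homeo}, into the statement that the vertical direction $x$ is a generic direction with respect to $\mathcal{C}_\varepsilon$ for all sufficiently small $\varepsilon>0$: indeed, Definition~\ref{DefGenericDirection} asks precisely that $x$ be neither the direction of a bitangent nor the direction of an inflectional tangent, and Proposition~\ref{prop:homeo} rules out the former while Proposition~\ref{prop:red} rules out the latter. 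Hence under (a) and (b) the asymptotic Poincaré-Reeb tree is a \emph{generic} asymptotic Poincaré-Reeb tree in the sense of Definition~\ref{def:genericAsymptPReeb}.

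First I would invoke Proposition~\ref{cor:redus}: hypothesis (a) forces every vertex other than the root to have valency $\ne 2$. Next I would invoke Proposition~\ref{cor:homeo}: hypothesis (b) forces every vertex to have valency $\le 3$. Combining, every non-root vertex has valency either $1$ (a leaf) or exactly $3$. Recalling from the construction in \cite[Section 4]{So3} that the Poincaré-Reeb tree is a plane tree rooted at the image of the origin, with the root of valency at most $2$ — and, in the asymptotic non-Morse situation of interest, the relevant configuration having the root behave as a single-child or two-child vertex — one reads off that each internal vertex has exactly two children: a non-root internal vertex of valency $3$ has one parent edge and two child edges, and the root accounts for its children directly. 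This is exactly the assertion that the tree is a complete binary tree.

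For the order-theoretic part I would argue as follows. By \cite[Corollary 4.21]{So3} the vertices of the Poincaré-Reeb graph carry a total preorder induced by the projection $x$; by \cite[Theorem 5.31]{So3}, in the asymptotic setting this preorder is strictly monotone along every geodesic emanating from the root, so no two vertices on a common geodesic share an $x$-coordinate. It remains to upgrade the preorder to a genuine total order, i.e. to exclude two \emph{distinct} vertices (necessarily on different geodesics, by the previous sentence) having equal $x$-coordinate. This is where genericity is used a second time: two vertices with the same $x$-value correspond to two connected components of the same vertical fibre that are ``critical'' simultaneously, which (after passing to the level curve realizing this, as in the dimension argument of Theorem~\ref{th:almostallgeneric}) produces either a vertical bitangent or a coincidence forbidden once $x$ avoids the finitely many non-generic directions of every $\mathcal{C}_\varepsilon$. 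Under (b), equality of $x$-coordinates at two polar branch points over the same $x_0$ is precisely the failure of injectivity of $\phi|_\Gamma$ excluded in Proposition~\ref{prop:homeo}; hence the preorder is antisymmetric, and being already total as a preorder, it is a total order.

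The main obstacle I expect is the second part — promoting the preorder to a total order — rather than the binary-tree part, which is a clean corollary of the two valency propositions. The delicate point is that Proposition~\ref{prop:homeo} as stated controls only \emph{vertical bitangents}, i.e. two polar branches meeting the same vertical line at points of equal $f$-value; one must check that every instance of two distinct Poincaré-Reeb vertices sharing an $x$-coordinate indeed comes from such a configuration (two distinct critical components of one vertical fibre at the same level $\varepsilon$, hence two points of $\Gamma$ over the same $x_0$ with the same $f$-value), so that hypothesis (b) genuinely applies. Care is also needed at the root, whose valency in the construction is treated separately from the other vertices, to confirm that ``complete binary'' is the right phrasing (every internal vertex, root included, having exactly two children). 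Once these bookkeeping points are settled, the three cited results combine immediately to give the statement.
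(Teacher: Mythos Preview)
Your proposal is correct and follows essentially the same route as the paper: invoke Proposition~\ref{cor:redus} and Proposition~\ref{cor:homeo} to force all internal vertices to have valency exactly~$3$ (hence the tree is complete binary), then upgrade the total preorder of \cite[Corollary 4.21]{So3} to a total order by observing that two distinct vertices sharing an $x$-coordinate would yield a vertical bitangent, excluded by hypothesis~(b). Your write-up is in fact more careful than the paper's two-sentence proof, correctly flagging that the strict monotonicity on geodesics is inherited from \cite[Theorem 5.31]{So3} and that the bookkeeping at the root needs a word; these are exactly the right points to check, and they go through without difficulty.
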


\begin{proof}
By Proposition \ref{cor:redus} and Proposition \ref{cor:homeo}, all its internal vertices have valency exactly $3$, thus the tree is complete binary. The total preorder endowing the vertices (see \cite[Corollary 4.21]{So3}) becomes a total order, since there are no vertical tangencies.
\end{proof}

\begin{figure}[H]
\centering
\begin{subfigure}[b]{0.45\textwidth} 
\includegraphics[scale=0.18, trim={6cm 0cm 0 4cm}, clip]{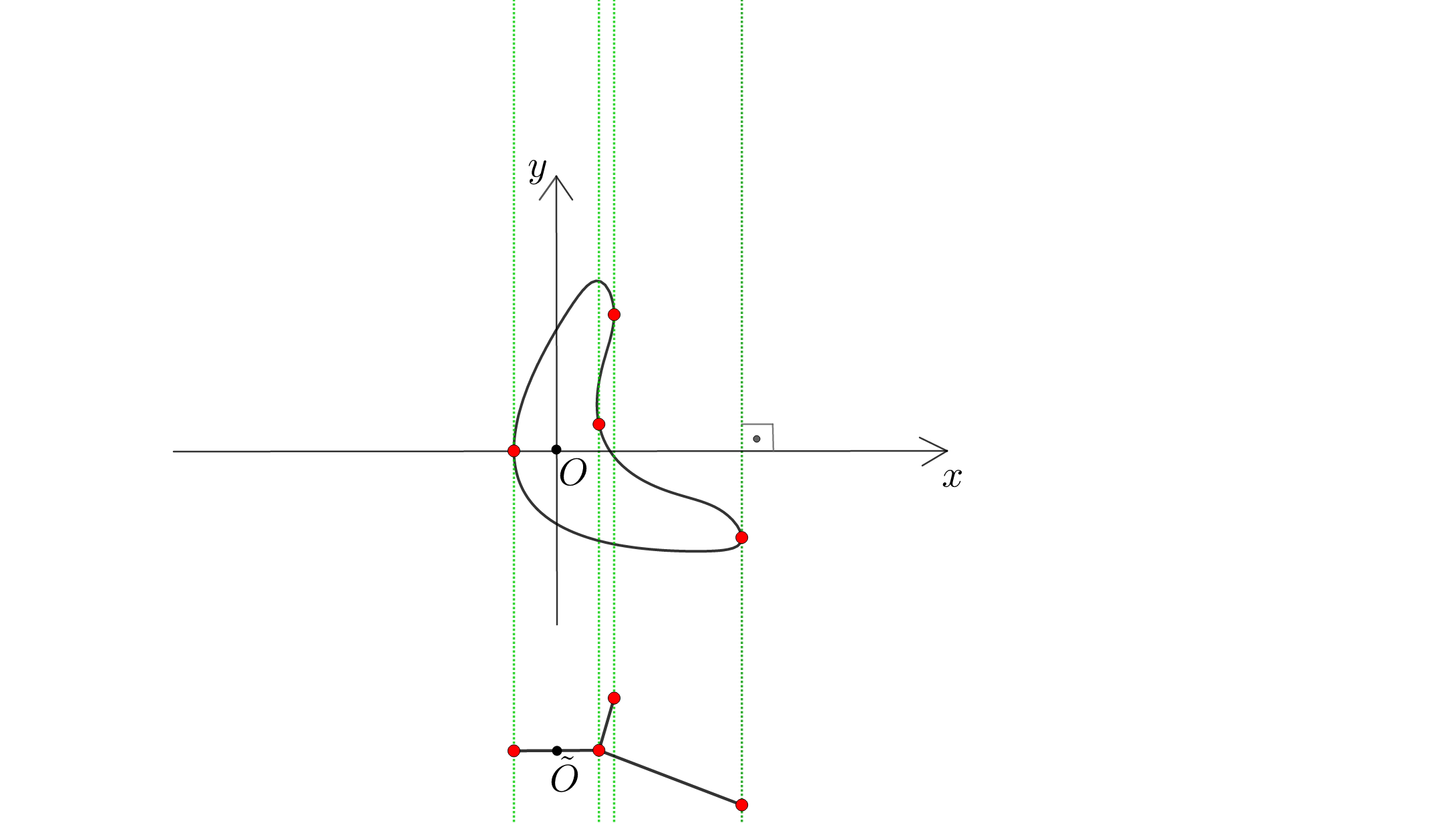} 
\end{subfigure}
\begin{subfigure}[b]{0.45\textwidth}
\includegraphics[scale=0.18, trim={6cm 1cm 0 3cm}, clip]{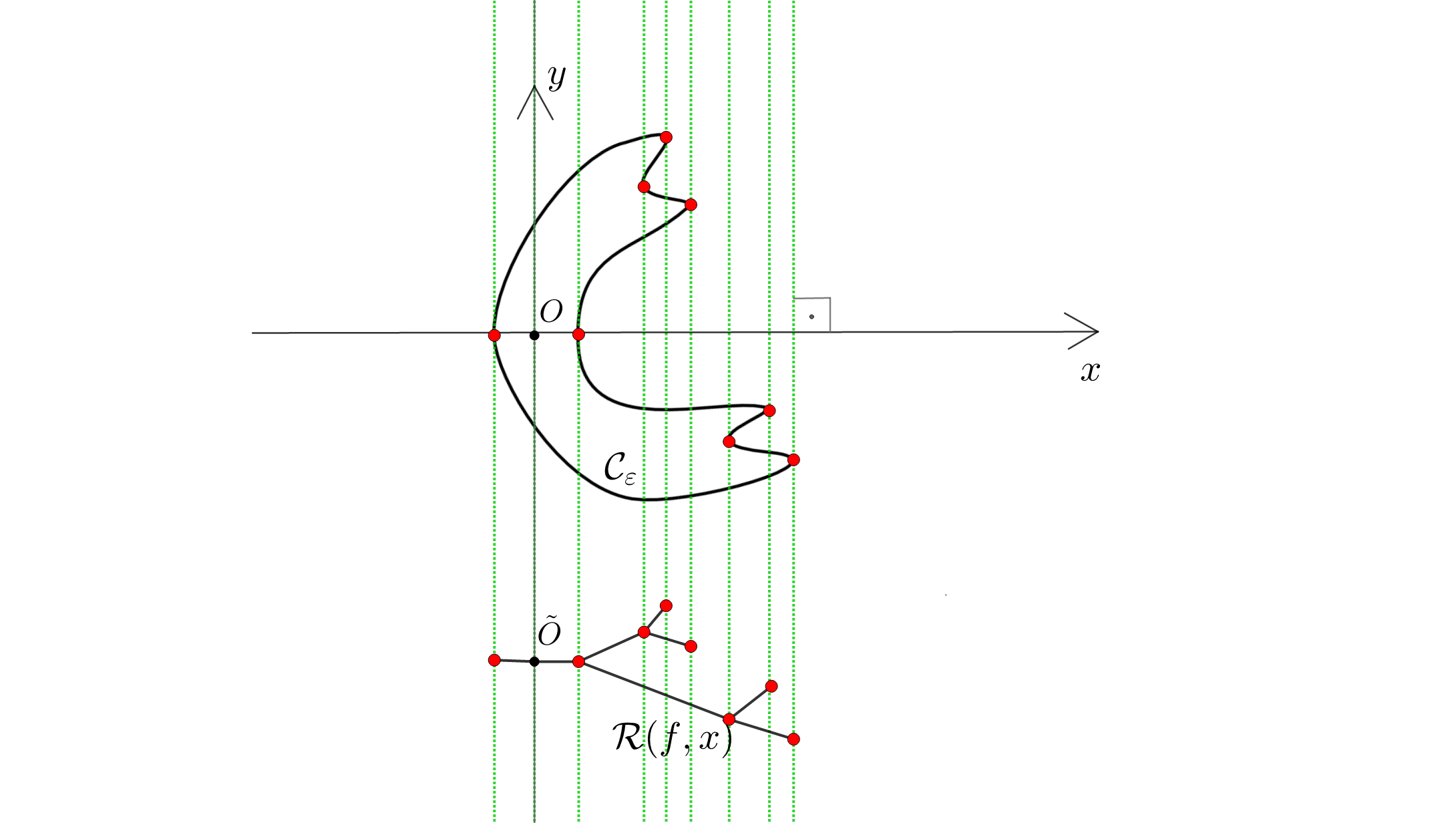}
\end{subfigure}

\caption{Two generic asymtotic Poincaré-Reeb trees.\label{fig:doubleBanReeb}}
\end{figure}

\subsection{Examples of generic asymptotic Poincaré-Reeb trees}
This subsection is dedicated to some examples of generic asymptotic Poincaré-Reeb trees.

\begin{example}
In Figure \ref{fig:doubleBanReeb}, two examples of generic asymptotic Poincaré-Reeb trees (associated to the vertical projection direction $x$) are shown. We denote them by $\mathcal{R}(f,x).$

\end{example}

\subsection{Examples of non-generic asymptotic Poincaré-Reeb trees}
If one considers polar curves $\Gamma(f,x)$ that are not reduced, or maps $\phi$ such that the restriction of $\phi$ on $\Gamma(f,x)$ is not a homeomorphism on its image, then one obtains non-generic Poincaré-Reeb trees. They have vertices whose valencies are not $3$, due to either bitangents or inflectional tangencies.

\begin{example}
\textbf{Bitangents:}

If the restriction of the map $\phi$ on $\Gamma(f,x)$ is not a homeomorphism on its image, then we obtain non-generic Poincaré-Reeb trees, since we do not have a total order on the vertices of the asymptotic Poincaré-Reeb tree. Vertices whose valencies are not $3$ appear.

Let us consider the polynomial $f:\mathbb{R}^2\rightarrow \mathbb{R},$ $$f(x,y):=x^{10}+\frac{y^6}{6}-3x\frac{y^4}{4}+x^2y^2.$$ 

The Poincaré-Reeb tree  of $f$ contains a vertex with valency equal to $4$. For a small enough $\varepsilon>0$, the curve $\mathcal{C}_\varepsilon$ is represented in blue in Figure \ref{fig:NotGenericEx}. The polar curve (in red) is $$\Gamma(f,x)=\{(x,y)\in\mathbb{R}^2\mid y(x-y^2)(2x-y^2)=0\}.$$

{\centering\vspace{10pt}
\includegraphics[scale=0.12]{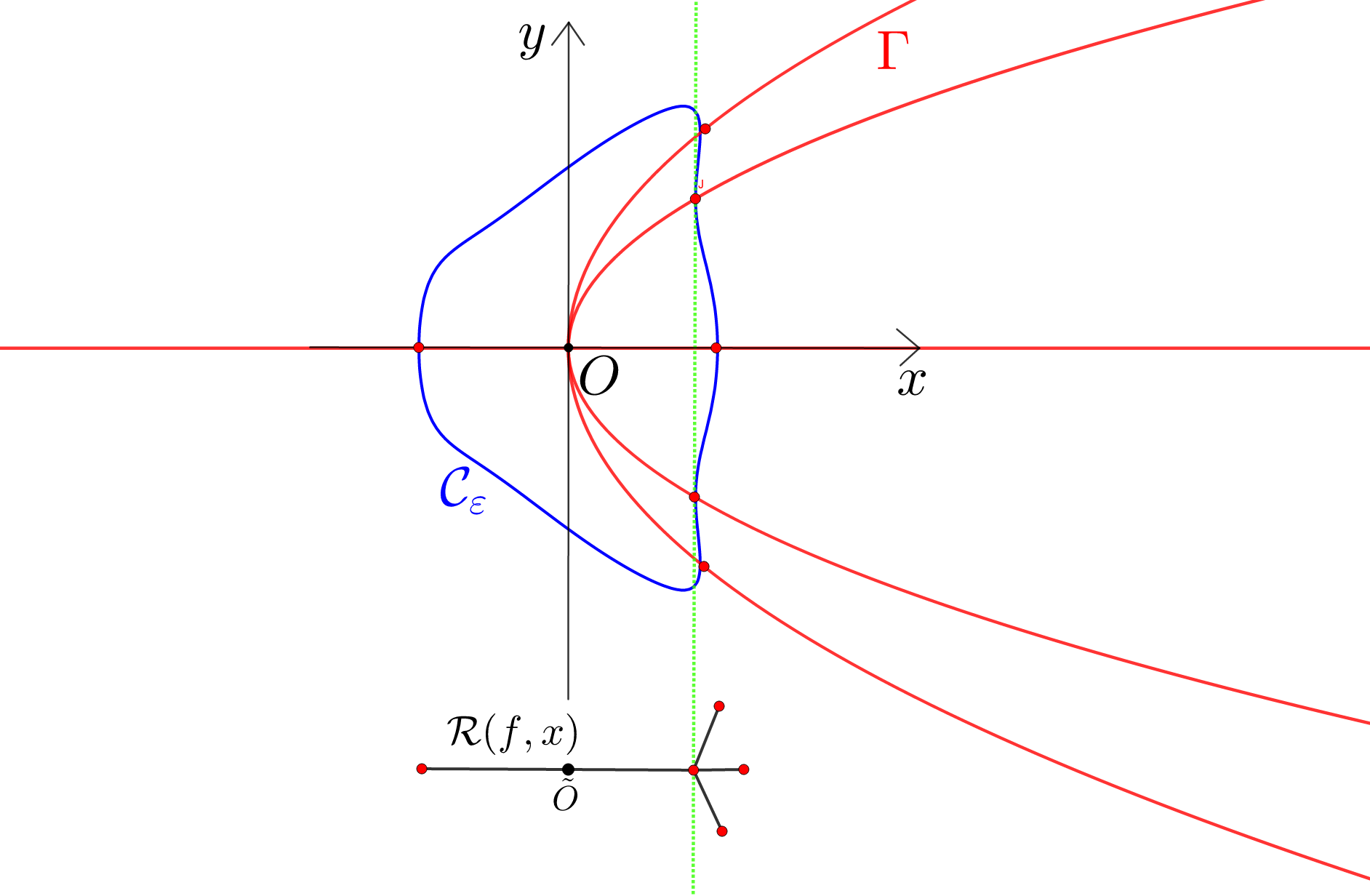} 
\captionof{figure}{A projection which is not generic giving vertices of valency greater than $3$ in the Poincaré-Reeb tree. \label{fig:NotGenericEx}}}
\vspace{10pt}
\end{example}

\begin{example}
\textbf{Inflections:}

If one considers the polar curves $\Gamma(f,x)$ not reduced, then one obtains non-generic Poincaré-Reeb trees, since the levels have vertical inflection tangents.

Consider the polynomial $f:\mathbb{R}^2\rightarrow \mathbb{R},$ $$f(x,y):=x^{12}+\int_{0}^y t^2(t-x^2)\mathrm{d}t.$$ Namely, the polar curve (in red) is: $$\Gamma(f,x)=\{(x,y)\in\mathbb{R}^2\mid y^2(y-x^2)=0\},$$ as one can see in Figure \ref{fig:nonGenericReebPolar} below.

{\centering\vspace{10pt}
\includegraphics[scale=0.13, trim={0 8cm 0 0cm}, clip]{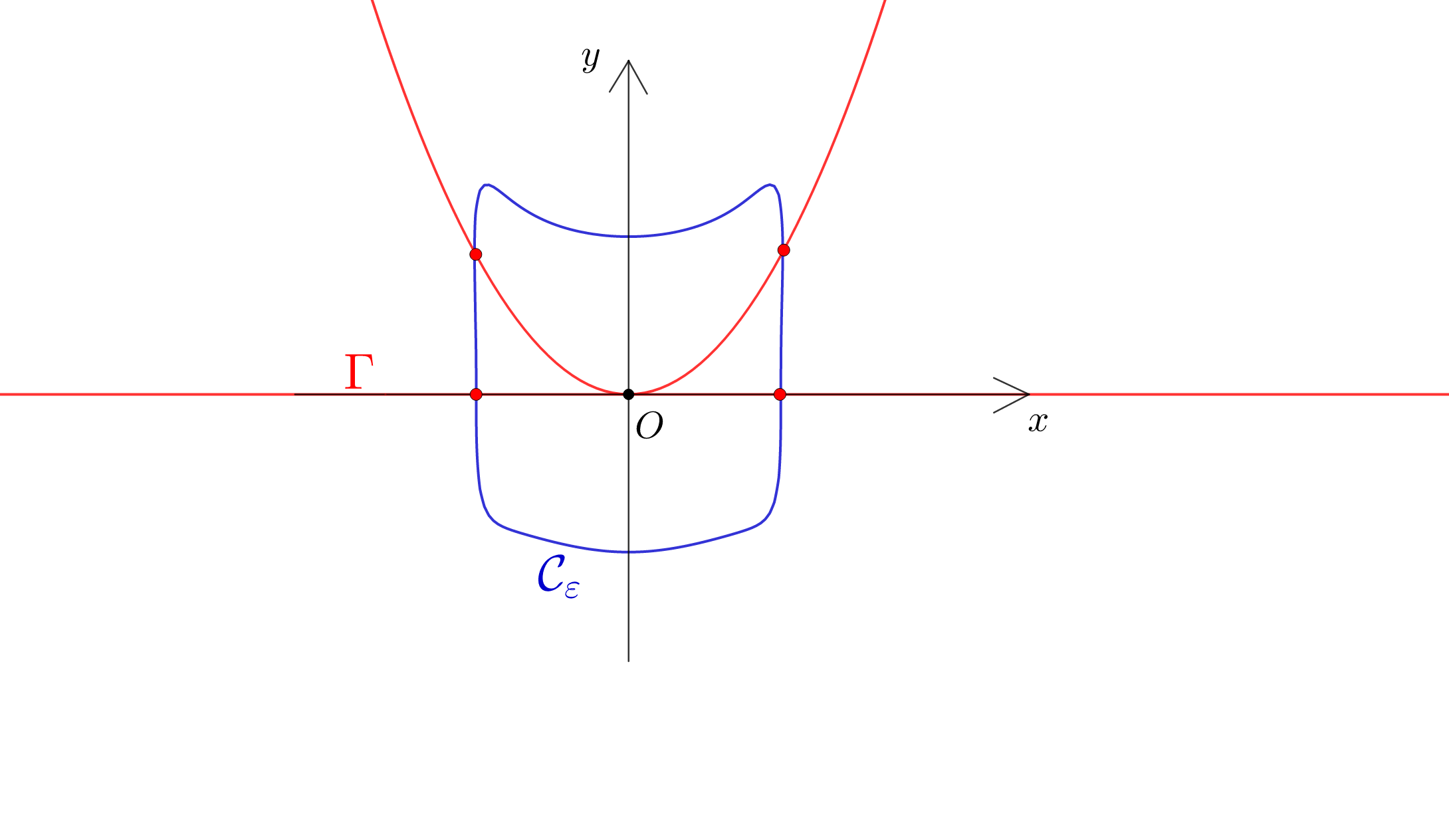} 
\captionof{figure}{The level curve $\mathcal{C}_\varepsilon$ of $f(x,y):=x^{12}+\int_{0}^y t^2(t-x^2)\mathrm{d}t$ and its non reduced polar curve $\Gamma(f,x)=\{(x,y)\in\mathbb{R}^2\mid y^2(y-x^2)=0\}$, in red.\label{fig:nonGenericReebPolar}}}
\vspace{10pt}

There are two vertical inflection points on the $Ox$ axis, that will give two vertices of valency $2$ in the Poincaré-Reeb tree, thus a non-generic asymptotic Poincaré-Reeb tree, shown in Figure \ref{fig:nonGenericReebGraph}. 

\begin{figure}[H]
\centering\vspace{10pt}
\includegraphics[scale=0.13, trim={6cm 8cm 2cm 7cm}, clip]{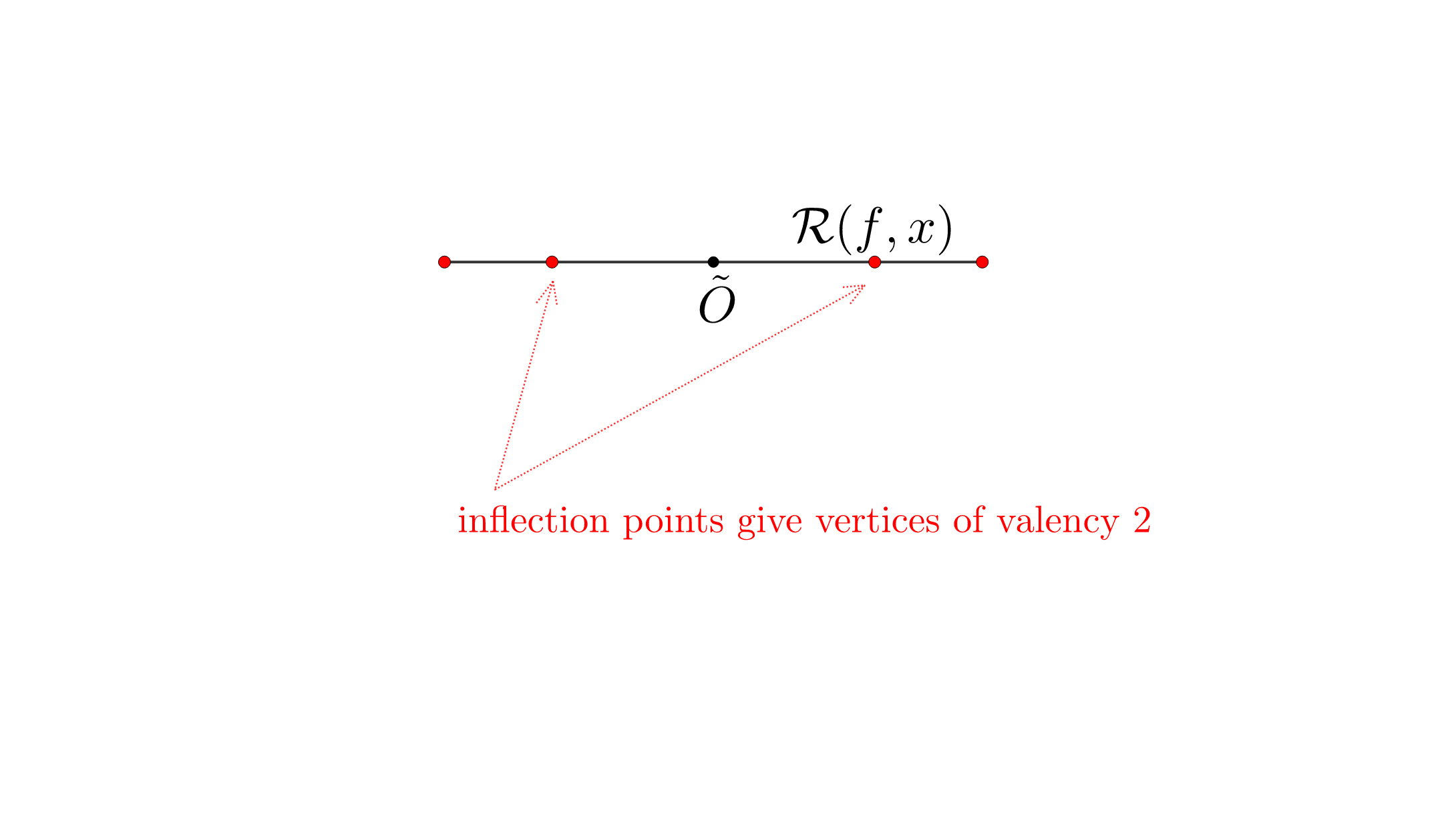} 
\caption{Non-generic asymptotic Poincaré-Reeb tree $\mathcal{R}(f,x)$ with two vertices of valency $2$, which correspond to two vertical inflection points.\label{fig:nonGenericReebGraph}}
\end{figure}
\end{example}

\section{Alternating permutations associated to level curves near a strict local minimum}\label{sect:snakesCrestsValleys}
The main goal of this section is to show that if the direction of projection is generic, then we can use alternating permutations to  encode the topology of the local shape of level curves of a bivariate polynomial near a strict local minimum. We call these permutations,  snakes.

\subsection{Positive tree, negative tree, union tree}\label{sect:RightLeftTree}
Note that by \cite[Definition 5.30, Theorem 5.31]{So3}, the image of the origin is the root of the asymptotic  Poincaré-Reeb tree. The real line is oriented, thus once the root is fixed, the root will separate without ambiguity between the negative side (i.e. left side) and the positive side (i.e. right side), as we shall see in the sequel.  

\begin{definition}
If $v_1(x_1,y_1)$ and $v_2(x_2,y_2)$ are two points in a real plane $P$ endowed with a trivial fibre bundle $\Pi:P\rightarrow\mathbb{R}$, $\Pi(x,y):=x$, we say that $v_2$ is \defi{to the right of} $v_1$ if $x_2>x_1.$ If $x_2<x_1,$ we say that $v_2$ is \defi{to the left of} $v_1$.
\end{definition}

\begin{definition}\label{def:proper binary right tree}
We call \defi{a positive tree}\index{tree!positive tree} a plane tree in which every vertex has its children to the right. We call \defi{positive edges}\index{edge!positive edge} the edges belonging to a complete plane binary positive tree.
\end{definition}

\begin{example}
The following Figure \ref{fig:rightTree} shows a complete plane binary positive tree. For instance, the vertex $v_2$ is to the right of the vertex $v_1$.

\begin{definition}\label{def:proper binary left tree}
We call \defi{a negative tree}\index{tree!negative tree} a plane tree in which every vertex has its children to the left. We call \defi{negative edges}\index{edge!negative edge} the edges belonging to a complete plane binary negative tree.
\end{definition}

{\centering\vspace{10pt}
\includegraphics[scale=0.15, trim={0 6cm 0 4cm}, clip]{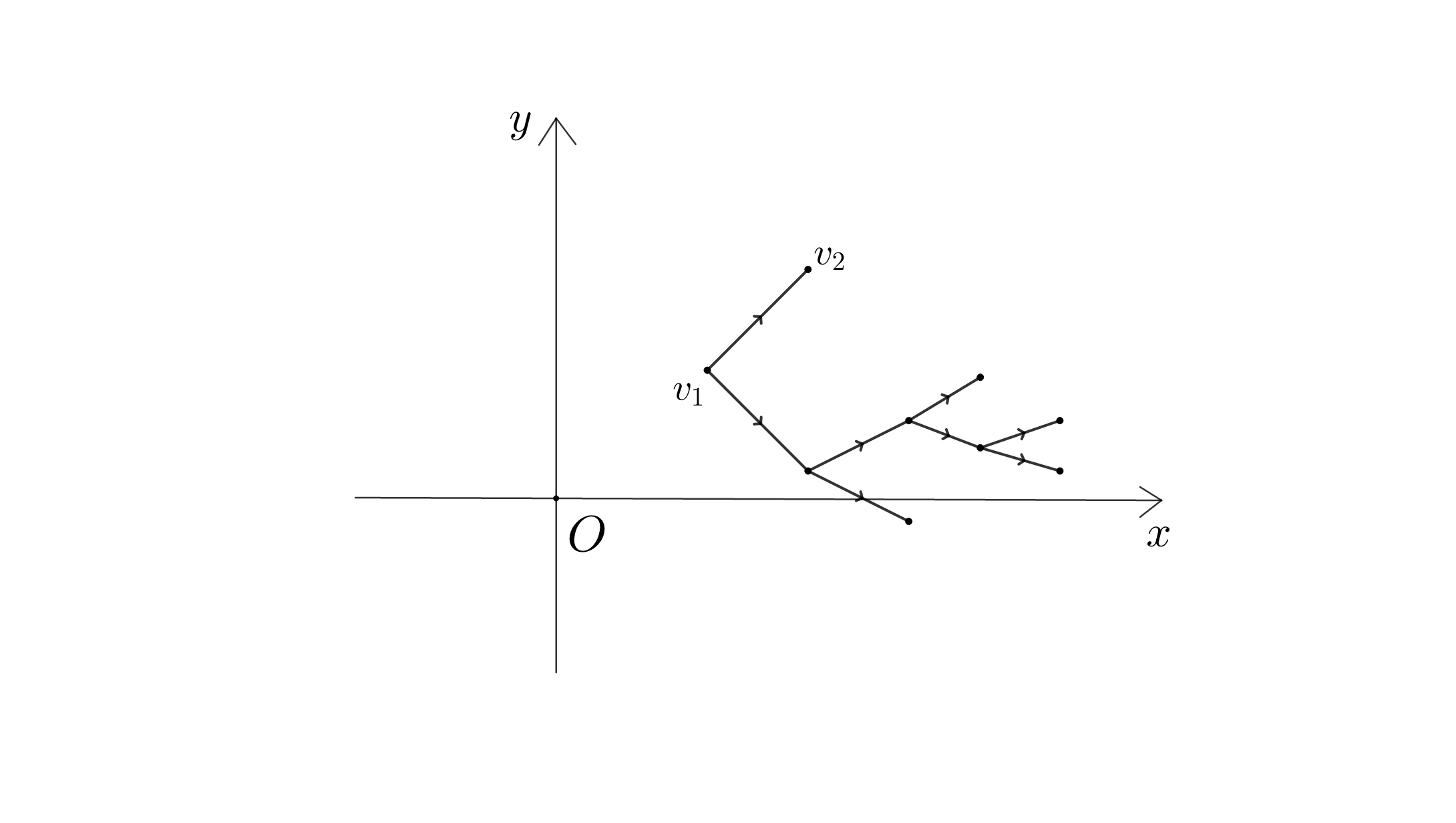} 
\captionof{figure}{A complete plane binary positive tree.\label{fig:rightTree}}}
\vspace{10pt}
\end{example}

\begin{definition}\label{def:unionTree}
Let $\mathcal{T}_+$ be a positive tree and $\mathcal{T}_-$ be a negative tree. We call \defi{the union tree}\index{tree!union tree} between $\mathcal{T}_-$ and $\mathcal{T}_+$, denoted by $\mathcal{T}_- - O - \mathcal{T}_+$, the plane tree obtained by making the connected sum of the two trees: we glue their roots into a point $O$, which becomes the root of the union-tree.
\end{definition}

\begin{example}
We show in Figure \ref{fig:unionTree} how we obtain the union tree, by taking the connected sum of two half-planes. The arrows show the orientation of the half-planes, such that at the glued line the orientations are reversed.

{\centering\vspace{10pt}
\includegraphics[scale=0.2, trim={0cm 0cm 0 4cm}, clip]{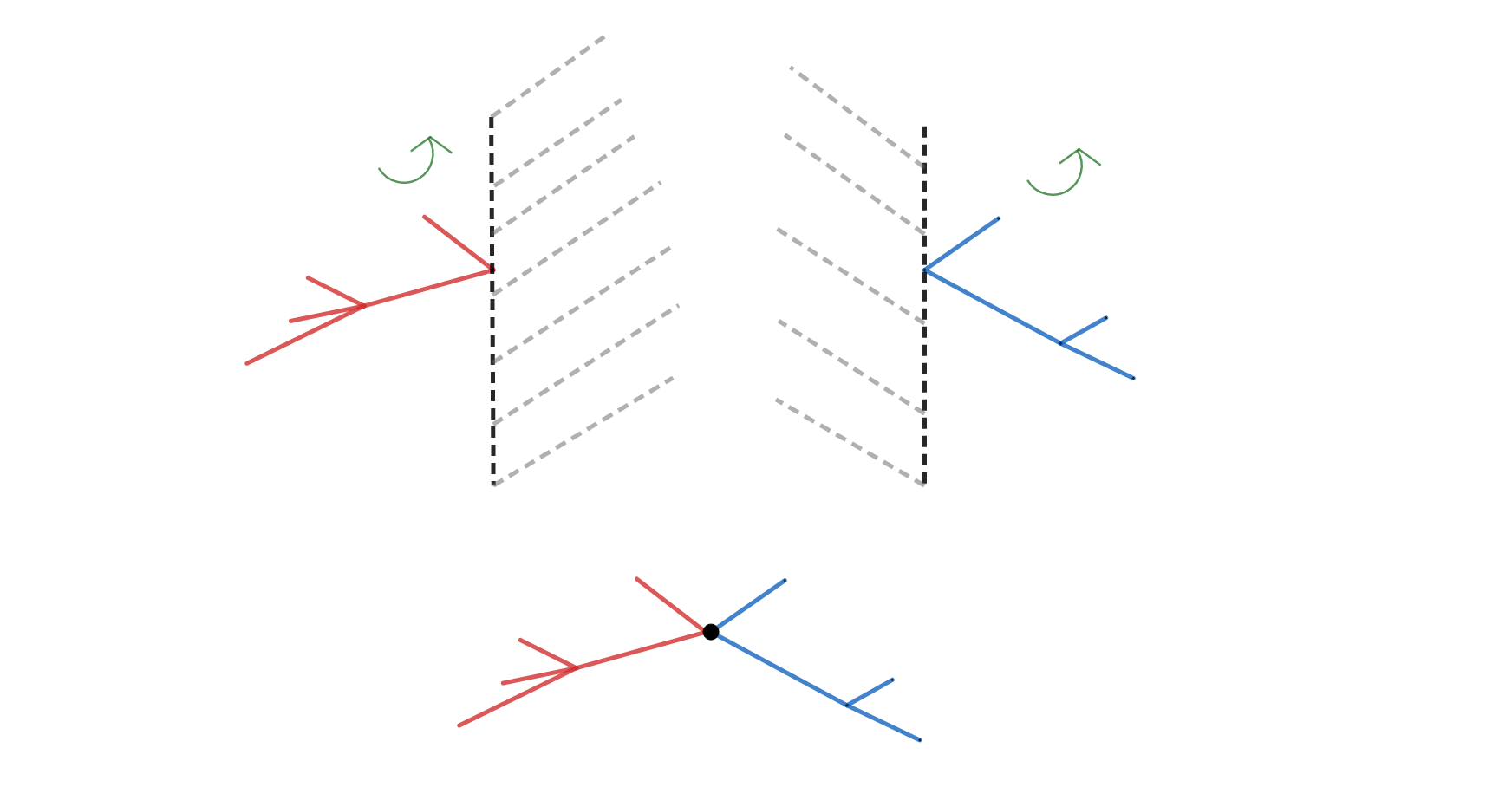} 
\captionof{figure}{The union tree of a negative tree and a positive tree.\label{fig:unionTree}}}
\vspace{10pt}
\end{example}

\begin{remark}
Since the plane is cooriented, the image of the origin by the quotient map $q$ separates the asymptotic Poincaré-Reeb tree in a negative tree and a positive tree. They form a union tree, whose root is the image of the origin (see Figure \ref{fig:unionTree}). 
\end{remark}

{\centering\vspace{10pt}
\includegraphics[scale=0.15]{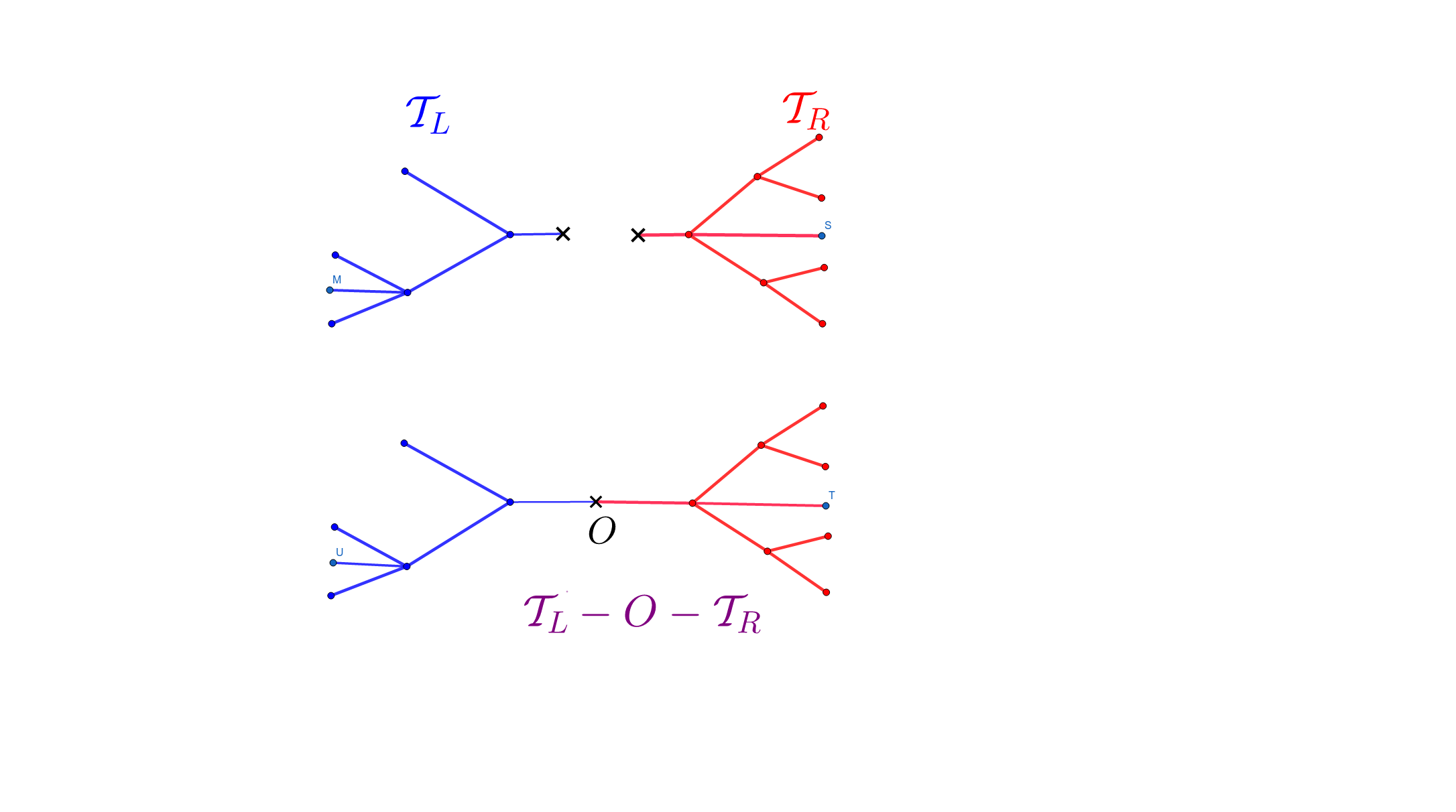} 
\captionof{figure}{The asymptotic Poincaré-Reeb tree is a union of a negative tree and a positive tree.\label{fig:unionasympt}}}
\vspace{10pt}

 \begin{remark}
 The root is a special vertex of the union tree. More precisely, the root of the union tree will be the only vertex with valency equal to two. 
 \end{remark}

\subsection{Crests and valleys}\label{sect:CrestValley}
Let us suppose from now on until the end of the paper that the polar curve $\Gamma(f,x)$ is reduced. This is equivalent to the fact that the levels of $f$ have only order $2$ intersections with their vertical tangents.

Let $\gamma^*$ be a polar half-branch (see \cite[Definition 5.13]{So3}) and let $(x_0,y_0):=\gamma^*\cap\mathcal{C}_\varepsilon.$ By Definition \ref{DefPolCurve} of the polar curve\index{polar curve} $\Gamma(f,x)$, the tangent to $\mathcal{C}_\varepsilon$ at the point $(x_0,y_0)$ is the vertical line $(x=x_0).$ In the following, we shall use the notations introduced in the previous sections.

\begin{definition}\label{DefCRestValley}
Let $\gamma^*$ be a polar half-branch. We say that a point $(x_0,y_0):=\gamma^*\cap \mathcal{C}_\varepsilon$ is a \defi{right crest}\index{crest!right crest} (respectively \defi{right valley}\index{valley!right valley}) of $\mathcal{C}_\varepsilon$ if and only if $x_0>0$ and $\mathcal{C}_\varepsilon$ is situated to the right (respectively, left) of the vertical line $(x=x_0),$ in a small enough neighbourhood of $(x_0,y_0).$ 

Let $\gamma^*$ be a polar half-branch. We say that a point $(x_0,y_0):=\gamma^*\cap \mathcal{C}_\varepsilon$ is a \defi{left crest}\index{crest!left crest} (respectively \defi{left valley}\index{valley!left valley}) of $\mathcal{C}_\varepsilon$ if and only if $x_0<0$ and $\mathcal{C}_\varepsilon$ is situated to the left (respectively, right) of the vertical line $(x=x_0),$ in a small enough neighbourhood of $(x_0,y_0).$ 

See Figure \ref{fig:rlCrestValley}.
\end{definition}

{\centering\vspace{10pt}
\includegraphics[scale=0.14]{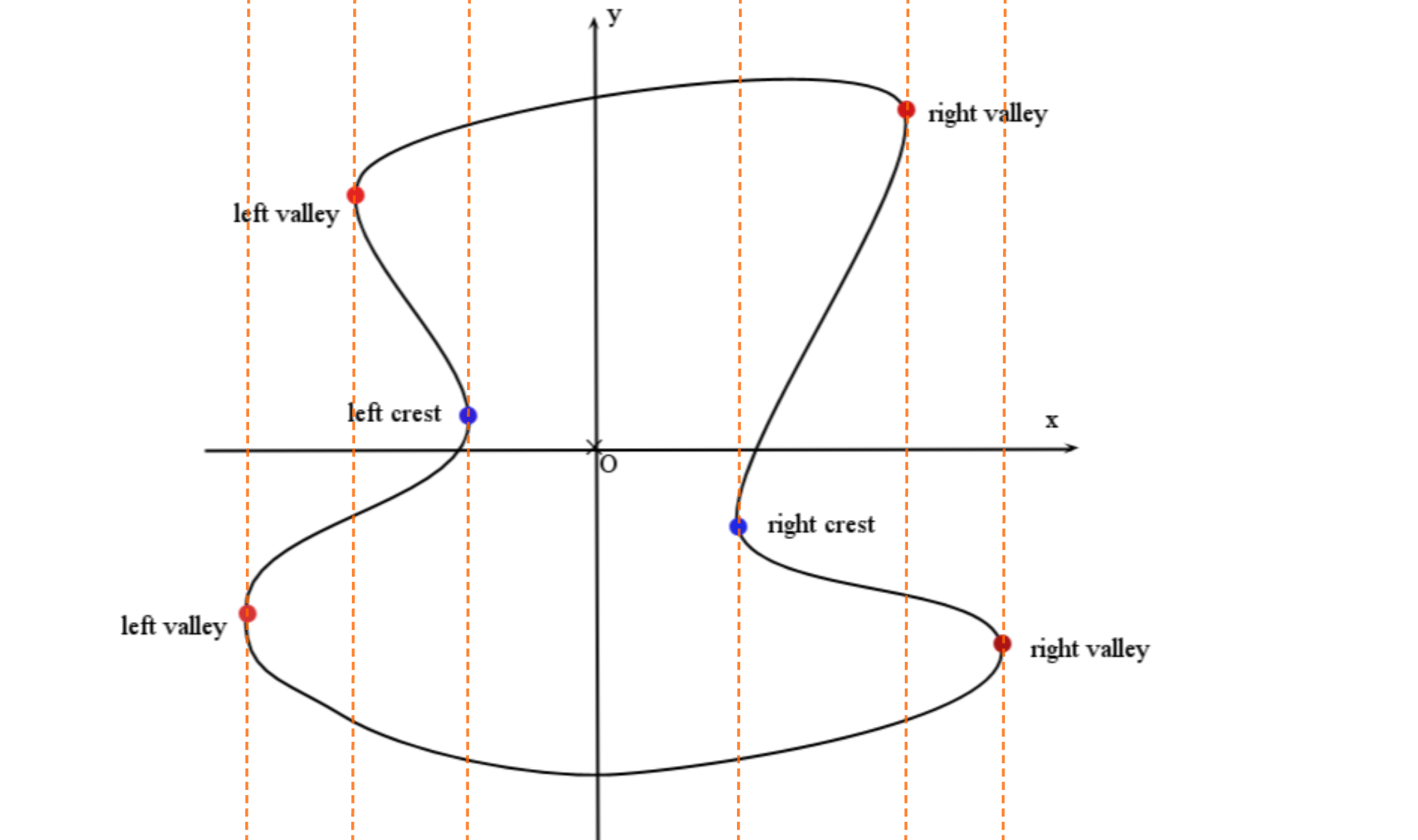}  
\captionof{figure}{Left and right crests in blue; left and right valleys in red.\label{fig:rlCrestValley}}}
\vspace{10pt}

\begin{remark}
The terminology of \textbf{crest} and \textbf{valley} is inspired by \cite[Definition 2]{CP}, where Coste and de la Puente used it to study atypical values of real bivariate polynomial functions at infinity. It is consistent with the one from geographical maps (see, for instance, \cite{geo1} and \cite{geo2}). However, in our case the observer who is studying the topographic map of the three-dimensional landscape is situated at infinity in the vertical direction of the $Oz$ axis. So our terminology of \enquote{valley} and \enquote{crest} is not intrinsic, since it depends on the position of the observer.
\end{remark}

\begin{remark}\label{remark:righcrest}
By Definition \ref{DefCRestValley}, one has the following properties in a small enough neighbourhood of $(x_0,y_0):$

(i) $(x_0,y_0)=\mathcal{C}_\varepsilon\cap\gamma^*$ is a right crest if and only if $x_0>0$ and $x_{|\mathcal{C}_\varepsilon}\geq x_0.$

(ii) $(x_0,y_0)=\mathcal{C}_\varepsilon\cap\gamma^*$ is a right valley if and only if $x_0>0$ and $x_{|\mathcal{C}_\varepsilon}\leq x_0.$

(iii) $(x_0,y_0)=\mathcal{C}_\varepsilon\cap\gamma^*$ is a left crest if and only if $x_0<0$ and $x_{|\mathcal{C}_\varepsilon}\leq x_0.$

(iv) $(x_0,y_0)=\mathcal{C}_\varepsilon\cap\gamma^*$ is a left valley if and only if $x_0<0$ and $x_{|\mathcal{C}_\varepsilon}\geq x_0.$

\end{remark}

Let us define two total order relations: one on the set of right polar half-branches and another one on the set of left polar half-branches. This is possible by Corollary \cite[Corollary 5.22]{So3}: in a good neighbourhood $V$ of the origin (see Definition \ref{def:GoodNeighb}), the polar half-branches intersect only at the origin.

\begin{definition}\label{DefOrderRel}
The \defi{canonical total order} \textbf{on the set of right polar half-branches} is the 
       restriction of the trigonometric order to it. The \defi{canonical total order on  the set of left polar half-branches} is the 
       restriction of the anti-trigonometric order to it.
\end{definition}

\begin{remark}
The order relation from Definition \ref{DefOrderRel} is well-defined, by Corollary \cite[Corollary 5.22]{So3}. 
Thus all the inequalities are strict and we have a total order relation between the right polar half-branches and a total order relation between  the left polar half-branches.
\end{remark}

{\centering\vspace{10pt}
\includegraphics[scale=0.2]{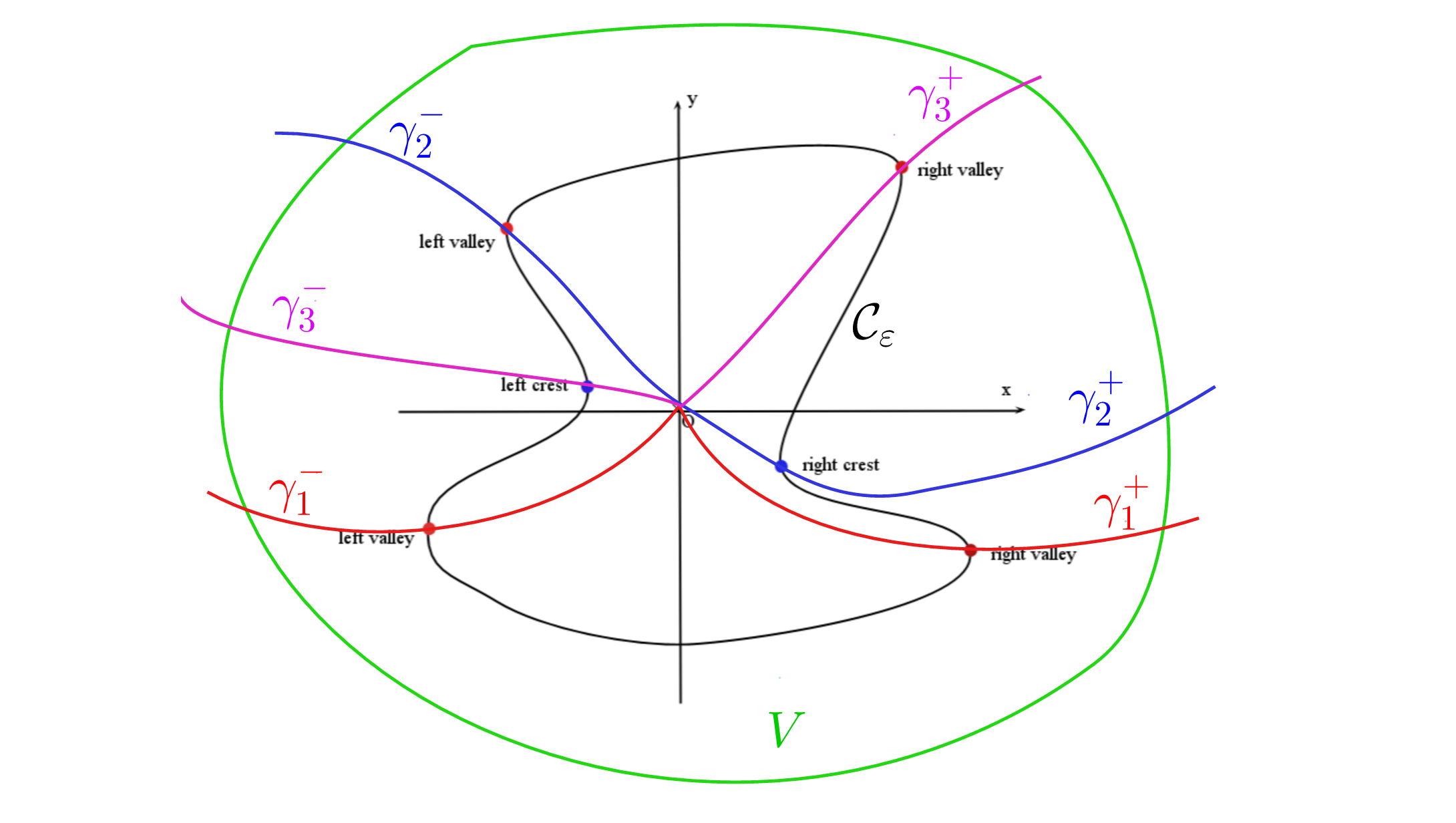} 
\captionof{figure}{Right half-branches: $\gamma_1^+<\gamma_2^+<\gamma_3^+$ and left half-branches: $\gamma_1^-<\gamma_3^-<\gamma_2^-$ of the polar curve $\Gamma(f,x)$.\label{fig:halfbranches}}}
\vspace{10pt}

\begin{proposition}\label{prop:alternating}
Let $\gamma_1^+<\gamma_2^+<\ldots<\gamma_k^+$ be all the right polar half-branches of $\Gamma(f,x)$. If $P_i:=\mathcal{C}_\varepsilon \cap \gamma_i^+,$ $P_i(x_i,y_i)$ in a good neighbourhood $V$ of the origin, then the points $P_1, P_2,\ldots,P_k$, in this order, consist of an alternation of right valleys and right crests, i.e. there are no two consecutive right valleys and no two consecutive right crests.

Similarly, to the left there is an alternation of left valleys and left crests, that is, there are no two consecutive left valleys and no two consecutive left crests.
\end{proposition}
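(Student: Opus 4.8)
The plan is to argue by contradiction: suppose two \emph{consecutive} right polar half-branches $\gamma_i^+ < \gamma_{i+1}^+$ both give, say, right crests at $P_i$ and $P_{i+1}$ (the case of two consecutive right valleys is symmetric, and the left-hand statement is obtained verbatim after reversing the orientation of $Ox$ and replacing the trigonometric order by the anti-trigonometric one). The key observation is that, by Remark \ref{remark:righcrest}(i), in a small enough neighbourhood of a right crest $P_j$ the function $x_{|\mathcal{C}_\varepsilon}$ attains a strict local minimum along the Jordan curve $\mathcal{C}_\varepsilon$ equal to $x_j$, while by Remark \ref{remark:righcrest}(ii) at a right valley it attains a strict local maximum. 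Since the polar curve is reduced, these local extrema of $x_{|\mathcal{C}_\varepsilon}$ are exactly the points of $\mathcal{C}_\varepsilon$ where the tangent is vertical, i.e. exactly the intersection points $P_1,\dots,P_k$ with the right polar half-branches (together with their left counterparts). So the problem reduces to a statement about the restriction of the coordinate function $x$ to the topological circle $\mathcal{C}_\varepsilon$.

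First I would set up the arc of $\mathcal{C}_\varepsilon$ between $P_i$ and $P_{i+1}$ that does not meet any other polar half-branch. Concretely: the canonical total order on the right polar half-branches is, by Definition \ref{DefOrderRel}, the restriction of the trigonometric (angular) order around the origin; using \cite[Corollary 5.22]{So3} the half-branches are disjoint in a good neighbourhood $V$ and are met by $\mathcal{C}_\varepsilon$ in a single point each, and by the asymptotic results \cite[Definition 5.30, Theorem 5.31]{So3} the cyclic order of the points $\mathcal{C}_\varepsilon\cap(\mathring{\Gamma})$ along the Jordan curve $\mathcal{C}_\varepsilon$ agrees with the cyclic angular order of the corresponding half-branches. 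Hence there is a well-defined \emph{consecutive} sub-arc $\alpha$ of $\mathcal{C}_\varepsilon$ joining $P_i$ to $P_{i+1}$ whose interior contains no point with vertical tangent, i.e. on which $x_{|\alpha}$ has no interior critical point.

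Now I would derive the contradiction from elementary one-variable calculus on $\alpha$. If $P_i$ and $P_{i+1}$ were both right crests, then $x_{|\mathcal{C}_\varepsilon}$ has a strict local minimum at each of the two endpoints of $\alpha$, so moving into the interior of $\alpha$ from either end the function $x$ strictly increases; being continuous on the compact arc $\alpha$ and having no interior critical point, $x_{|\alpha}$ would then have to attain an interior maximum, which is an interior critical point — contradiction. (Symmetrically, two consecutive right valleys force an interior minimum.) Therefore crests and valleys must alternate along $P_1,\dots,P_k$, and the same argument with the orientation reversed gives the left-hand statement. The main obstacle, and the step that needs the most care, is the second paragraph: justifying rigorously that the angular order on polar half-branches translates into the order in which $\mathcal{C}_\varepsilon$ meets them along the Jordan curve, so that "consecutive half-branches" really do bound a critical-point-free arc of $\mathcal{C}_\varepsilon$; this is where I would lean on \cite[Corollary 5.22]{So3} and \cite[Theorem 5.31]{So3}, possibly together with the fact that $\mathcal{D}_\varepsilon$ is a topological disk so that a radial/angular coordinate is monotone along $\mathcal{C}_\varepsilon$ near the origin. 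Once that is in place, the calculus argument is routine.
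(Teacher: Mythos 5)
Your approach is correct in outline but takes a genuinely different route from the paper's. You recast the proposition as a Morse-theoretic statement about the restriction $x_{|\mathcal{C}_\varepsilon}$ on the Jordan curve: right crests are strict local minima, right valleys strict local maxima, and two consecutive extrema of the same type bounding a critical-point-free arc would force an interior critical point, contradiction. The paper instead argues directly and more analytically: assuming two consecutive right valleys $P_{i_0},P_{i_0+1}$, it uses the strict monotonicity of $f$ along polar half-branches (\cite[Lemma 5.18]{So3}) to get $f>\varepsilon$ on one branch and $f<\varepsilon$ on the other along a vertical line $x=a$ with $a$ between $x_{i_0+1}$ and $x_{i_0}$, applies the intermediate value theorem to $y\mapsto f(a,y)-\varepsilon$ to produce a point of $\mathcal{C}_\varepsilon$ between the two branches, passes to a limit $a\to x_{i_0+1}$ using compactness of $\mathcal{C}_\varepsilon$, and then either contradicts the valley condition (Remark \ref{remark:righcrest}) or applies Rolle's theorem in $y$ to exhibit a polar point strictly between the two half-branches in the $y$-order on that vertical line, contradicting consecutiveness. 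What your route buys is a clean reduction to the alternation of local extrema of a height function on a circle; what it costs is exactly the step you flag: justifying that the consecutive half-branches $\gamma_i^+<\gamma_{i+1}^+$ cut off an arc $\alpha\subset\mathcal{C}_\varepsilon$ whose interior meets \emph{no} polar half-branch at all, including left ones. That requires a sector argument (disjoint arcs from the interior point $O$ of the disc $\mathcal{D}_\varepsilon$ to its boundary induce compatible cyclic orders, so the sector bounded by $\gamma_i^+\cup\{O\}\cup\gamma_{i+1}^+$ and $\alpha$ contains no other half-branch), which is true in this semi-algebraic setting but is not directly supplied by \cite[Corollary 5.22]{So3} or \cite[Theorem 5.31]{So3} and would need to be written out; in particular, ruling out a left half-branch entering the right-hand sector deserves an explicit line. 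The paper's proof sidesteps this global cyclic-order comparison because its Rolle argument is localized to a single vertical line, where the $y$-order on right half-branches is easy to control. Your final calculus argument on $\alpha$ (interior maximum of a smooth function forces a critical point, hence a vertical tangency, hence a point of $\Gamma$) is routine and correct once $\alpha$ is in place.
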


\begin{proof}
It is sufficient to prove the statement only for the right side, and only for two consecutive right polar half-branches. 

We shall prove that there are no two consecutive right polar half-branches $\gamma_{i_0}$ and $\gamma_{i_0+1}$ such that the points $P_{i_0}(x_{i_0},y_{i_0}):=
\mathcal{C}_\varepsilon\cap
\gamma_{i_0}$ and $P_{i_0+1}(x_{i_0+1},y_{i_0+1}):=
\mathcal{C}_\varepsilon\cap
\gamma_{i_0+1}$ are both right valleys of $\mathcal{C}_\varepsilon.$ 

We argue by contradiction. Let us suppose the contrary, namely that $P_{i_0}$ and $P_{i_0+1}$ are both right valleys. Furthermore, assume without loss of generality that $x_{i_0+1}<x_{i_0}.$

Let us consider $a\in]x_{i_0+1},x_{i_0}[$ and the vertical line $x=a.$ Recall Definition \ref{def:GoodNeighb} and let us choose a good neighbourhood $V$ of the origin. Then there is a unique intersection point $A:=\gamma_{i_0+1}^+\cap (x=a)$ and a unique intersection point $B:=\gamma_{i_0}^+\cap (x=a)$. Hence one obtains the points $A(a,y_A)$ and $B(a,y_B).$ By Lemma \cite[Lemma 5.18]{So3}, since $f_{|\gamma_{i_0}^+}$ and $f_{|\gamma_{i_0+1}^+}$ are strictly increasing in $V$, we have $f(A)>f(P_{i_0+1})=\varepsilon$ and $f(B)<f(P_{i_0})=\varepsilon$. See Figure \ref{fig:altern}.

{\centering\vspace{10pt}
\includegraphics[scale=0.2]{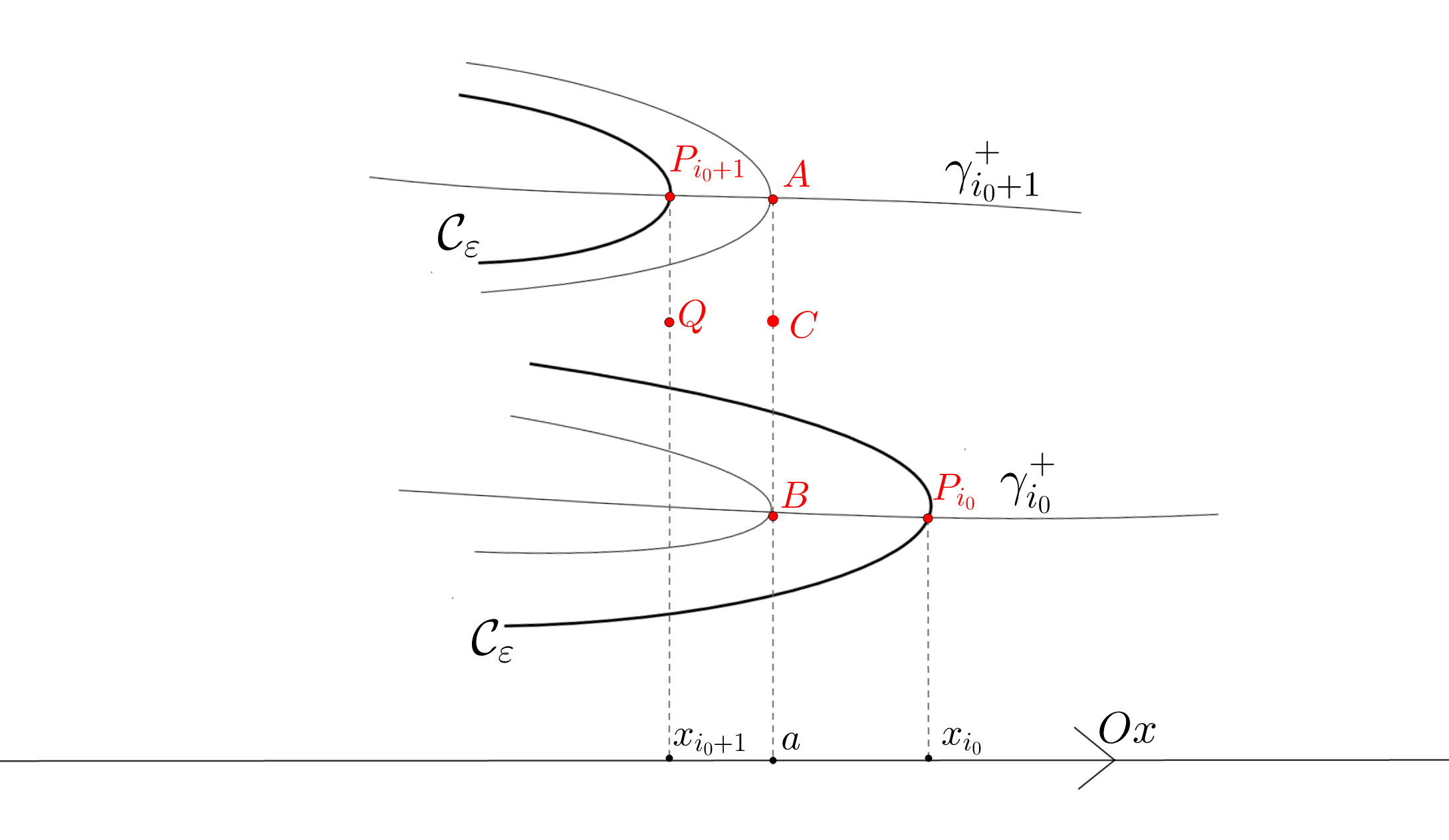} 
\captionof{figure}{No two consecutive right valleys.\label{fig:altern}}}

\vspace{10pt}

 If we denote by $F:\mathbb{R}\rightarrow\mathbb{R}$ the function $$F(y):=f(a,y)-\varepsilon,$$
then $F(y_A)>0$ and $F(y_B)<0.$ By the continuity of the polynomial function $F,$ there exists $b\in\mathbb{R}$, $y_B<b<y_A,$ such that $F(b)=0.$ Namely, there exists a point $C(a,b)$ with $y_B<b<y_A,$ such that $C(a,b)\in \mathcal{C}_\varepsilon.$ 

Now since the above holds for any $a\in]x_{i_0+1},x_{i_0}[,$ let us consider a continuous sequence $(a_k)\in ]x_{i_0+1},x_{i_0}[.$ We obtain a sequence of points $C_k(a_k,b_k)$ with $y_B<b_k<y_A,$ such that $C(a_k,b_k)\in \mathcal{C}_\varepsilon.$ Therefore we have just proved that for all $a_k\in]x_{i_0+1},x_{i_0}[$, there exists a point $C(a_k,b_k)\in\mathcal{C}_\varepsilon,$ i.e. $F(a_k,b_k)=0.$ Thus one obtains a continuous sequence of points $C(a_k,b_k)$ in the compact set $\mathcal{C}_\varepsilon$, hence one can subtract a convergent subsequence $C'(a_k,b_k)$ which has its limit in $\mathcal{C}_\varepsilon.$ Denote this limit by $L:=\lim_{a_k\rightarrow x_i}C'(a_k,b_k).$

Now there are two possibilities:
\begin{enumerate}
\item if $L\equiv P_{i_0+1},$ then one obtains a contradiction with Remark \ref{remark:righcrest}, (ii), namely with the definition of a valley at $P_{i_0+1}$;

\item if $L\equiv Q\neq P_{i_0+1}$, $Q(x_{i_0+1},y_Q)$:

let us first redefine the function $F:\mathbb{R}\rightarrow\mathbb{R}$, namely $$F(y):=f(x_{i_0+1},y)-\varepsilon;$$ since both $Q\in\mathcal{C}_\varepsilon$ and $P_{i_0+1}\in\mathcal{C}_\varepsilon$
, i.e. $F(y_Q)=F(y_{i_0+1})=0,$ one can apply Rolle's theorem to the continuous and differentiable function $F$. Hence there exists $y_S\in ]y_Q,y_{i_0+1}[$ such that $F'(y_S)=0.$ But $F'(y_S)=\frac{\partial f}{\partial y}(x_{i_0+1},y_s),$ thus the point $S(x_{i_0+1},y_s)\in\Gamma$. 
This is in contradiction with the hypothesis that $\gamma_{i_0}^+$ and
$\gamma_{i_0+1}^+$ are \textbf{consecutive} right polar half-branches.

\end{enumerate}
In conclusion, we have proved that two consecutive right polar half-branches cannot determine two right-valleys of $\mathcal{C}_\varepsilon.$

Using a similar reasoning, one can prove that two consecutive right polar half-branches cannot give two right-crests of $\mathcal{C}_\varepsilon.$

We conclude that the right valleys and the right crests must alternate. A similar result can be established for left valleys and left crests.
\end{proof}

\begin{corollary}
If $\gamma_i^+$ and $\gamma_{i+1}^+$ are two consecutive right polar half-branches with $P_i:=\mathcal{C}_\varepsilon\cap \gamma_i^+$ and $P_{i+1}:=\mathcal{C}_\varepsilon\cap \gamma_{i+1}^+$, $0<\varepsilon\ll 1,$ in a good neighbourhood 
$V$ of the origin (see Definition \ref{def:GoodNeighb}), then the following implications hold:
\begin{enumerate}
\item If $P_{i+1}$ is a right-crest and $P_i$ is a right-valley, then $x_{i+1}<x_i.$
\item If $P_{i+1}$ is a right-valley and $P_i$ is a right-crest, then $x_i<x_{i+1}.$
\end{enumerate} 
\end{corollary}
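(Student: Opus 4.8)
The plan is to argue by contradiction, re-running the argument from the proof of Proposition~\ref{prop:alternating} while keeping track of which of $x_i$, $x_{i+1}$ is the larger. It suffices to establish item~(1); item~(2) follows by the same reasoning with the words ``crest'' and ``valley'' (and all the relevant inequalities) interchanged, using Remark~\ref{remark:righcrest}(i) in place of Remark~\ref{remark:righcrest}(ii). So assume that $P_{i+1}$ is a right-crest, that $P_i$ is a right-valley, and, for contradiction, that $x_{i+1}\geq x_i$. If $x_{i+1}=x_i$, then the univariate polynomial $g(y):=f(x_i,y)-\varepsilon$ vanishes at both $y_i$ and $y_{i+1}$, so Rolle's theorem yields $y_S$ strictly between them with $\frac{\partial f}{\partial y}(x_i,y_S)=0$; hence $S:=(x_i,y_S)\in\Gamma$ lies on a polar half-branch whose ordinate on the line $(x=x_i)$ is strictly between those of $\gamma_i^+$ and $\gamma_{i+1}^+$. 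Since in a good neighbourhood $V$ the polar half-branches meet only at the origin (\cite[Corollary 5.22]{So3}), on the line $(x=x_i)$ they are ordered exactly as in the canonical order, so this half-branch lies strictly between $\gamma_i^+$ and $\gamma_{i+1}^+$, contradicting the hypothesis that they are consecutive. Thus we may assume $x_{i+1}>x_i$.

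Now fix $a\in]x_i,x_{i+1}[$ and let $A:=\gamma_i^+\cap(x=a)$ and $B:=\gamma_{i+1}^+\cap(x=a)$, each a single point for $\varepsilon$ small enough and $V$ a suitably chosen good neighbourhood; write $A(a,y_A)$, $B(a,y_B)$. Since $f$ is strictly increasing along each right polar half-branch as one moves away from the origin (\cite[Lemma 5.18]{So3}), and since $A$ lies past $P_i$ on $\gamma_i^+$ while $B$ lies before $P_{i+1}$ on $\gamma_{i+1}^+$, we get $f(A)>\varepsilon>f(B)$. Applying the intermediate value theorem to $F(y):=f(a,y)-\varepsilon$ on the segment between $y_A$ and $y_B$ produces $b$ strictly between $y_A$ and $y_B$ with $(a,b)\in\mathcal{C}_\varepsilon$. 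Letting $a$ run along a sequence $a_k\downarrow x_i$ we obtain $C_k:=(a_k,b_k)\in\mathcal{C}_\varepsilon$; since $\gamma_i^+\cap(x=a_k)\to P_i$ and $\gamma_{i+1}^+\cap(x=a_k)\to Q:=\gamma_{i+1}^+\cap(x=x_i)$, the ordinates $b_k$ remain between $y_i$ and the ordinate $y_Q$ of $Q$, and by compactness of $\mathcal{C}_\varepsilon$ a subsequence of $(C_k)$ converges to some $L=(x_i,b_\infty)\in\mathcal{C}_\varepsilon$ with $b_\infty$ between $y_i$ and $y_Q$.

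Finally we split into two cases. If $L=P_i$, then $C_k\to P_i$ along points of $\mathcal{C}_\varepsilon$ whose abscissa $a_k$ exceeds $x_i$, contradicting Remark~\ref{remark:righcrest}(ii), i.e.\ the defining property of a right-valley at $P_i$. If $L\neq P_i$, then $g(y):=f(x_i,y)-\varepsilon$ vanishes at $y_i$ and at $b_\infty$, so Rolle's theorem gives $y_S$ strictly between them, hence strictly between $y_i$ and $y_Q$, with $\frac{\partial f}{\partial y}(x_i,y_S)=0$; as in the first paragraph, $S:=(x_i,y_S)$ then lies on a polar half-branch strictly between $\gamma_i^+$ and $\gamma_{i+1}^+$, contradicting that these are consecutive. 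Either way we reach a contradiction, proving $x_{i+1}<x_i$, and hence item~(1). The step demanding the most care is the ordinate bookkeeping in the last two paragraphs: one must verify that $b_\infty$, and therefore the value $y_S$ produced by Rolle's theorem, genuinely lies strictly between the ordinates of $\gamma_i^+$ and $\gamma_{i+1}^+$ on the line $(x=x_i)$, so that the resulting polar branch really does violate consecutiveness. This relies on the non-crossing of the polar half-branches in a good neighbourhood (\cite[Corollary 5.22]{So3}).
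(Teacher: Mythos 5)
Your proposal is correct and follows essentially the same route as the paper: the paper's own proof simply says to re-run the argument of Proposition~\ref{prop:alternating} and derive a contradiction with consecutiveness via Rolle's theorem, and that is exactly what you do, just written out in full. The only small addition is that you also dispose of the boundary case $x_{i+1}=x_i$ by a direct Rolle argument, which the paper leaves implicit (there it is excluded by the absence of vertical bitangents under the genericity hypothesis); this is a harmless and slightly more self-contained touch.
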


\begin{proof}

\begin{enumerate}

\item We argue by contradiction. If we suppose that $x_{i+1}>x_i$, then we apply the same steps as in Proposition \ref{prop:alternating} and by Rolle's theorem we obtain a contradiction with the hypothesis that $\gamma_{i}^+$ and $\gamma_{i+1}^+$ are consecutive right polar half-branches.
\item Analogous reasoning.
\end{enumerate}
\end{proof}

\begin{lemma}\label{lemma:lastValley}
The point $P_1:=\mathcal{C}_\varepsilon\cap \gamma_1^+,$ $P_1(x_1,y_1)$ determined by the first right polar half-branch $\gamma_1^+$ and the point
$P_k:=\mathcal{C}_\varepsilon\cap \gamma_k^+,$ $P_k(x_k,y_k)$ determined by the last right polar half-branch $\gamma_k^+$ are both right-valleys of $\mathcal{C}_\varepsilon,$ in a good neighbourhood of the origin, denoted by $V$.
\end{lemma}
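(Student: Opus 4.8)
The statement asserts that the two extreme right polar half-branches $\gamma_1^+$ (the one closest to the positive $x$-axis in the trigonometric order, i.e.\ lowest) and $\gamma_k^+$ (the highest) both produce right-valleys. The plan is to argue by contradiction, exploiting the fact that on each side the origin is the root of the asymptotic Poincaré-Reeb tree, so the curve $\mathcal{C}_\varepsilon$ is ``convex-ish'' in the large and cannot wrap past its outermost polar half-branch. Concretely, suppose $P_1$ is a right-crest. By Definition \ref{DefCRestValley} and Remark \ref{remark:righcrest}(i), in a small enough neighbourhood of $P_1$ the curve $\mathcal{C}_\varepsilon$ lies in the half-plane $\{x\ge x_1\}$. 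I would then use the global structure of $\mathcal{C}_\varepsilon$ as a Jordan curve bounding the disk $\mathcal{D}_\varepsilon$ containing the origin: since $\gamma_1^+$ is the \emph{first} right polar half-branch, below it (in the trigonometric sense, between it and the ray $y=0,\ x>0$) there are no points of $\Gamma(f,x)$, hence no vertical tangencies of $\mathcal{C}_\varepsilon$ in that angular sector near the origin.

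The key step is then a Rolle-type / intermediate-value argument analogous to the one in Proposition \ref{prop:alternating}. I would pick a vertical line $x=a$ with $0<a<x_1$ and examine $F(y):=f(a,y)-\varepsilon$. On one hand, because the origin lies in $\mathcal{D}_\varepsilon$ and $\gamma_1^+$ meets $\mathcal{C}_\varepsilon$ only at $P_1$ in $V$, the line $x=a$ must cross $\mathcal{C}_\varepsilon$; on the other hand, tracking the sign of $f$ along $\gamma_1^+$ via Lemma \cite[Lemma 5.18]{So3} (monotonicity of $f$ restricted to a polar half-branch) and the assumption that $P_1$ is a crest, I would derive that any such crossing point, as $a\to x_1$, accumulates either at $P_1$ — contradicting the crest condition in Remark \ref{remark:righcrest}(i) — or at a distinct point $Q$ on the vertical line $x=x_1$ with $f(Q)=\varepsilon$, whence Rolle's theorem applied to $y\mapsto f(x_1,y)-\varepsilon$ on the interval between $y_Q$ and $y_1$ yields a zero of $\partial f/\partial y$, i.e.\ a point of $\Gamma$ with abscissa $x_1$ strictly between $\gamma_1^+$ and the $x$-axis. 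This contradicts the minimality of $\gamma_1^+$ among the right polar half-branches. The same reasoning, with ``first'' replaced by ``last'' and the angular sector taken on the other side of $\gamma_k^+$, handles $P_k$.

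I would also need to rule out the remaining sign configuration (the ``symmetric situation'' mentioned parenthetically in the proof of Proposition \ref{prop:red}): namely that $\mathcal{C}_\varepsilon$ sits on the far side so that the vertical line $x=a$ for $a$ slightly less than $x_1$ does \emph{not} meet $\mathcal{C}_\varepsilon$ at all near $P_1$. Here I would invoke that $\mathcal{D}_\varepsilon$ is a topological disk containing $O$, so its projection to the $x$-axis is an interval $[x_{\min},x_{\max}]\ni 0$; since $P_1\in\mathcal{C}_\varepsilon$ with $x_1>0$, the whole segment $[0,x_1]\subset[x_{\min},x_{\max}]$, so every vertical line $x=a$ with $0\le a\le x_1$ genuinely meets $\mathcal{D}_\varepsilon$, and in particular meets $\mathcal{C}_\varepsilon$. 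This pins down the sign of $F$ near $P_1$ and makes the intermediate-value step legitimate.

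\textbf{Main obstacle.} The delicate point is the limiting argument: extracting the accumulation point $L$ of the crossing points $C(a_k,b_k)$ and showing that the only two alternatives are $L=P_1$ or $L$ a point on $\{x=x_1\}$ strictly on one side of $P_1$, then correctly orienting Rolle's theorem so that the resulting point of $\Gamma$ genuinely falls \emph{between} $\gamma_1^+$ and the boundary ray of the sector (rather than, say, coinciding with $\gamma_1^+$). This requires care with the good neighbourhood $V$, with the fact that half-branches meet only at $O$ (Corollary \cite[Corollary 5.22]{So3}), and with the monotonicity Lemma \cite[Lemma 5.18]{So3}; but structurally it is a direct adaptation of the argument already carried out in Proposition \ref{prop:alternating}.
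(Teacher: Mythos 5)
The paper's own proof proceeds by a sector-trapping argument, not an accumulation-of-crossings argument. Assuming $P_k$ is a right-crest, it isolates the arc $\delta := \{(x,y) \in \mathcal{C}_\varepsilon \mid x > x_k,\ y > y_k\} \cap V$ and the \enquote{triangular} sector $P_kAB$ bounded by the vertical segment $\{x = x_k,\ y > y_k\}$, the half-branch $\gamma_k^+$, and $\partial V$; it then checks that $\delta$ can meet none of the three boundary pieces (the vertical wall is excluded by Rolle, $\partial V$ because $\mathcal{C}_\varepsilon \subset V$, and $\gamma_k^+$ because $\gamma_k^+ \cap \mathcal{C}_\varepsilon = \{P_k\}$), contradicting that the compact curve must close up. Crucially, the Rolle step is applied only on the ray $\{x = x_k,\ y > y_k\}$, so the polar point it produces automatically lies \emph{above} $\gamma_k^+$, contradicting that $\gamma_k^+$ is the \emph{last} right polar half-branch. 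The orientation is forced by the extremality of the chosen half-branch together with the choice of ray.

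Your proposal instead adapts the accumulation/Rolle argument from Proposition \ref{prop:alternating}, scanning vertical lines $x = a \nearrow x_1$ and extracting a limit $L$. That argument works for two \emph{consecutive} half-branches $\gamma_{i_0}^+,\gamma_{i_0+1}^+$ because consecutiveness is what gets contradicted: a polar point produced between them at a common abscissa is impossible regardless of which side of the interval it lands on. When the same scheme is applied to $\gamma_1^+$ alone, the intended contradiction requires the accumulation point $Q(x_1,y_Q)$ to lie on the side of $\gamma_1^+$ towards the positive $x$-axis; otherwise Rolle produces a polar point \emph{above} $\gamma_1^+$ at $x = x_1$, which could simply belong to $\gamma_2^+$ and yields no contradiction with $\gamma_1^+$ being first. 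You flag exactly this as the \enquote{main obstacle}, but the proposal does not close it: the projection-interval observation guarantees the line $x=a$ meets $\mathcal{C}_\varepsilon$, but gives no control over \emph{where} it meets, hence no control over the sign of $y_Q - y_1$. The paper's sector construction sidesteps this entirely by choosing the sector on the extremal side of $\gamma_k^+$, where by maximality there is simply no room for another right polar half-branch.
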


\begin{proof}
We argue by contradiction. Let us suppose that $P_k$ is a right-crest of $\mathcal{C}_\varepsilon,$ as in Figure \ref{fig:lastVall}.

{\centering\vspace{10pt}
\includegraphics[scale=0.15]{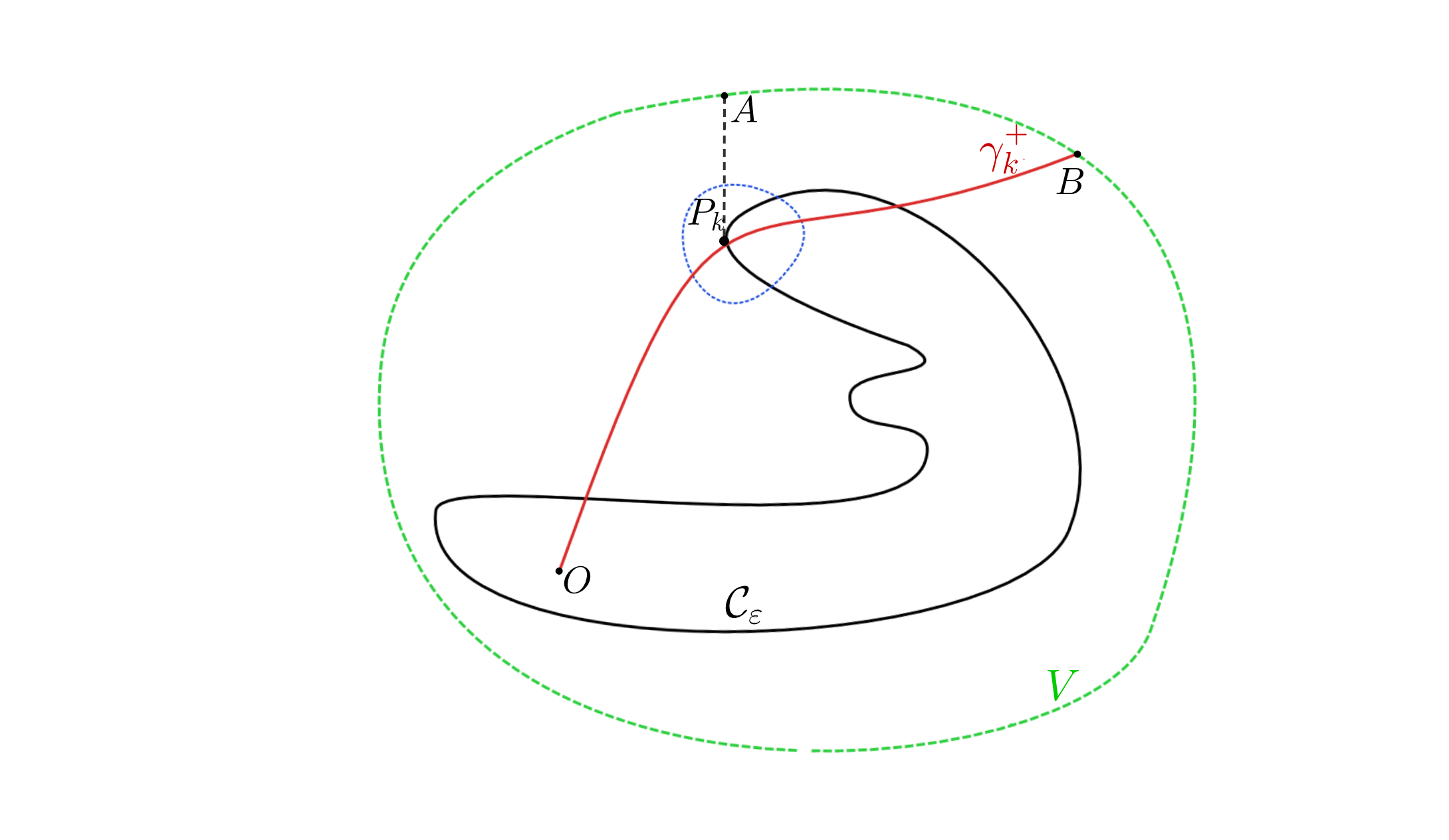} 
\captionof{figure}{Arguing by contradiction: if $P_k$ were a right crest.\label{fig:lastVall}}}
\vspace{10pt}

 Let us denote by $\delta:=\{(x,y)\in\mathcal{C}_
\varepsilon\mid x>x_k \text{ and } y>y_k\}\cap V$.

Let $A:=(x=x_k)\cap \partial V$ and $B:=\gamma_k^+ \cap \partial V.$ In the following we shall prove that $P_kAB$ is a \enquote{triangular} sector of $V$. This follows directly from Definition \ref{def:GoodNeighb} of a good neighbourhood, since the polar curve $\Gamma(f,x)$ has no vertical tangents in $V$. Namely, since $\Gamma(f,x)$ has no vertical tangents, neither the right polar half-branch $\gamma_k^+\subset\Gamma(f,x)$ has vertical tangents (see impossible Figure \ref{fig:contrad}).

{\centering\vspace{10pt}
\includegraphics[scale=0.15, trim={0 6cm 0 6cm}, clip]{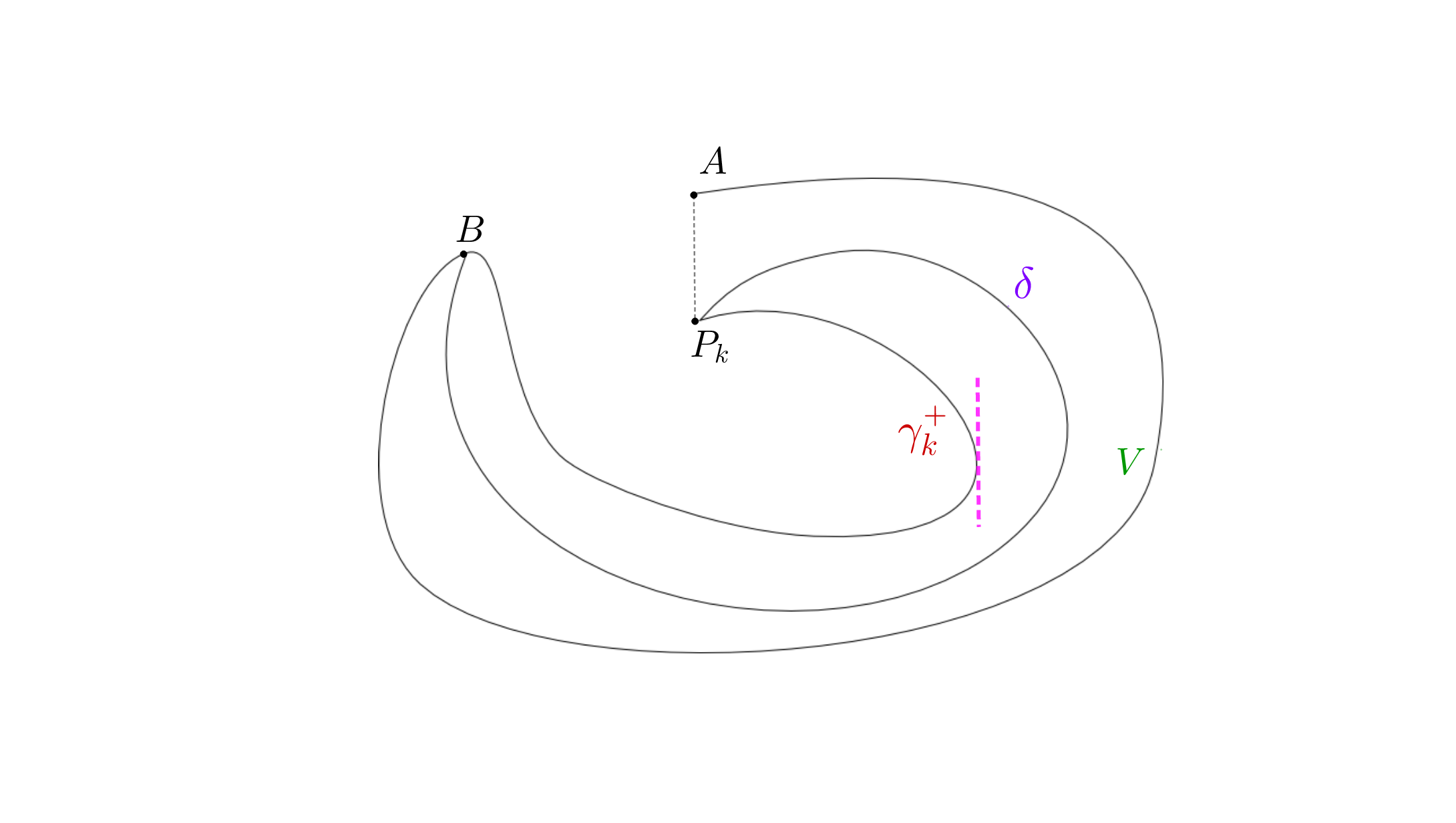} 
\captionof{figure}{No vertical tangents of $\gamma_k^+$ in $V$.\label{fig:contrad}}}
\vspace{10pt}

Moreover, since $\mathcal{C}_\varepsilon$ is a compact curve, $\delta$ has to \enquote{escape} from the triangular sector $P_kAB$. We want to prove that this leads us to a contradiction, in three steps, as follows.
\begin{enumerate}
\item We shall prove that $\delta \cap (x=x_k)=\emptyset$. We argue by contradiction. If there exists a point $Q(x_Q,y_Q)\in\mathcal{C}_\varepsilon$
with $x_Q=x_k$ and $y_Q>y_k$, then by Rolle's theorem one obtains another right polar half-branch $\gamma_{k+1}^+>\gamma_k^+.$ We have obtained a contradiction.

\item Since by hypothesis $\delta\subset \mathcal{C}_\varepsilon\subset V$, one has $\delta\cap \partial V=\emptyset.$
\item We prove now that $\delta\cap\gamma_k^+=\emptyset$. This follows from the fact that in a good neighbourhood $V$ of the origin, the right polar half-branch $\gamma_k^+$ intersects $\mathcal{C}_\varepsilon$ in precisely one point $P_k$, for $0<\varepsilon\ll 1.$
\end{enumerate}

We obtained a contradiction with the fact that $\mathcal{C}_\varepsilon$ has to escape from the triangular sector $P_kAB$, thus $P_k$ is a right- valley of $\mathcal{C}_\varepsilon.$

Suppose now that $P_1$ is a right-crest. We apply the same steps as above to prove that we obtain a contradiction, thus $P_1$ must also be a right-valley of $\mathcal{C}_\varepsilon.$
\end{proof}

In conclusion, to the right we have alternating right crests and right valleys, starting and ending with a right valley.
 Similarly, to the left.

\begin{lemma}\label{lemma:valleyLeaf}
Let $\gamma^+\subset \Gamma(f,x)$ be a right polar half-branch and let $P:=\gamma^+\cap\mathcal{C}_\varepsilon$, with $0<\varepsilon\ll 1$ be a right-valley of $\mathcal{C}_\varepsilon.$ Then the vertex corresponding to $P$ is a leaf of the Poincaré-Reeb tree, i.e. it has no children.
\end{lemma}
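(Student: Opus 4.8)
The plan is to read the statement off the local shape of $\mathcal{D}_\varepsilon$ at $P$, using that $P$ is a right-valley and that $\Gamma(f,x)$ is reduced. Recall from \cite[Section 4]{So3} that the vertices of the Poincaré-Reeb tree are the classes of the connected components of the fibres $\Pi^{-1}(c)\cap\mathcal{D}_\varepsilon$; the vertex attached to $P$ is the class $v_P$ of the component $K$ of $\Pi^{-1}(x_0)\cap\mathcal{D}_\varepsilon$ containing $P$, and its children are the classes of the components of $\Pi^{-1}(x_0+\eta)\cap\mathcal{D}_\varepsilon$, $0<\eta\ll 1$, that accumulate onto $K$. Since $x_0>0$, the vertex $v_P$ lies in the positive part of the tree, and by the strict monotonicity of $x$ on the geodesics issued from the root (the image of the origin, which sits at $x=0<x_0$; see \cite[Theorem~5.31]{So3}) the children of $v_P$ are exactly its neighbours of larger $x$-coordinate. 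Hence it suffices to show that near $P$ the disk $\mathcal{D}_\varepsilon$ does not extend to the right of the vertical line $(x=x_0)$.

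First I would fix a local normal form at $P=(x_0,y_0)$. As $\varepsilon\ll 1$, the curve $\mathcal{C}_\varepsilon$ is smooth, so $\big(\tfrac{\partial f}{\partial x}(P),\tfrac{\partial f}{\partial y}(P)\big)\neq(0,0)$; since the tangent at $P$ is vertical, $\tfrac{\partial f}{\partial y}(P)=0$ and thus $a:=\tfrac{\partial f}{\partial x}(P)\neq 0$. Because $\Gamma(f,x)$ is reduced, the contact of $\mathcal{C}_\varepsilon$ with its vertical tangent at $P$ is of order exactly $2$, so $\tfrac{\partial^2 f}{\partial y^2}(P)\neq 0$; by the implicit function theorem $\mathcal{C}_\varepsilon$ is, near $P$, the graph $x=\xi(y)$ of a smooth function with $\xi(y_0)=x_0$, $\xi'(y_0)=0$, and $\xi''(y_0)=-\tfrac{1}{a}\tfrac{\partial^2 f}{\partial y^2}(P)\neq 0$. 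By Remark \ref{remark:righcrest}(ii) (the right-valley condition), $\xi(y)\le x_0$ for $y$ near $y_0$, whence $\xi''(y_0)<0$ and $\xi(y)<x_0$ for $0<|y-y_0|\ll 1$. A Taylor expansion of $f-\varepsilon$ at $P$ then shows that, in a small ball $B$ around $P$, the set $\{f\le\varepsilon\}$ equals $\{x\le\xi(y)\}$ if $a>0$ and $\{x\ge\xi(y)\}$ if $a<0$; in particular $\mathcal{D}_\varepsilon\cap B$ is one of these two regions.

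The only real obstacle is to exclude the case $a<0$. If $a<0$, then $\mathcal{D}_\varepsilon\cap B=\{x\ge\xi(y)\}$, and counting connected components of $\Pi^{-1}(c)\cap\mathcal{D}_\varepsilon\cap B$ as $c$ crosses $x_0$ one finds two components for $c<x_0$ (above and below the ``pocket'' $\{\xi(y)\le x<x_0\}$) which merge into a single component for $c\ge x_0$. Consequently $v_P$ would be a vertex with two edges towards smaller $x$ and one edge towards larger $x$; since the root has $x$-coordinate $0<x_0$, its parent is one of the two edges of smaller $x$, and $v_P$ would then have a child of strictly smaller $x$-coordinate, contradicting the strict monotonicity of $x$ along geodesics from the root (\cite[Theorem~5.31]{So3}). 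Hence $a>0$, so $\mathcal{D}_\varepsilon\cap B=\{x\le\xi(y)\}\subseteq\{x\le x_0\}$ and $\mathcal{D}_\varepsilon\cap B\cap(x=x_0)=\{P\}$.

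Finally I would conclude. The component $K$ of $\Pi^{-1}(x_0)\cap\mathcal{D}_\varepsilon$ through $P$ is, inside $B$, reduced to $\{P\}$, and for $0<\eta\ll 1$ the fibre $\Pi^{-1}(x_0+\eta)\cap\mathcal{D}_\varepsilon$ meets $B$ in the empty set; therefore no component of $\Pi^{-1}(x_0+\eta)\cap\mathcal{D}_\varepsilon$ accumulates onto $K$, i.e. $v_P$ has no neighbour of larger $x$-coordinate, hence no child. Thus the vertex corresponding to $P$ is a leaf of the Poincaré-Reeb tree. (One could instead pin down the side of $\mathcal{D}_\varepsilon$ at $P$ from the fact that $f$ is strictly increasing along $\gamma^+$ away from the origin, by \cite[Lemma~5.18]{So3}; but the argument via monotonicity on geodesics is the most economical.)
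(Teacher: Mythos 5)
Your proof is correct, and it reaches the same local picture as the paper, but it uses a different mechanism to pin down the orientation of $\mathcal{D}_\varepsilon$ at $P$. The paper invokes \cite[Lemma 5.18]{So3} directly: since $f$ is strictly increasing along the right polar half-branch $\gamma^+$ as one moves away from the origin, the sublevel set $\{f\le\varepsilon\}$ must lie to the left of $P$, which immediately forces the leaf configuration. You instead compute an explicit local normal form (via the implicit function theorem, using reducedness of $\Gamma(f,x)$ to guarantee $\partial^2_y f(P)\neq 0$) to realize $\mathcal{C}_\varepsilon$ near $P$ as the graph $x=\xi(y)$ of a concave function, and then you rule out the \enquote{wrong} sign $a=\partial_x f(P)<0$ by a contradiction with the strict monotonicity of $x$ along geodesics of the asymptotic Poincaré-Reeb tree (\cite[Theorem 5.31]{So3}). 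Both routes are valid and non-circular (Theorem 5.31 of \cite{So3} is a prior result), but the one you dismiss at the end as an alternative is actually the paper's argument, and it is arguably the shorter one: Lemma 5.18 already gives $\partial_x f(P)>0$ in one line, since $\gamma^+$ crosses the vertical line $x=x_0$ with increasing $x$ and $f$ increases along it, so the case split on the sign of $a$ and the tree-theoretic contradiction can be avoided entirely. Your explicit IFT computation is a welcome extra layer of rigor that the paper leaves to the figures. One small slip: in your $a<0$ analysis, the excluded \enquote{pocket} is the region $\{x<\xi(y)\}$ (the part of $B$ outside $\mathcal{D}_\varepsilon$), not $\{\xi(y)\le x<x_0\}$, which is the part of $\mathcal{D}_\varepsilon$ to the left of the line $x=x_0$; this does not affect the argument.
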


\begin{proof}
Let us consider a small enough neighbourhood of $P$, namely let us study $\mathcal{D}_\varepsilon$ locally around $P$.

By \cite[Lemma 5.3]{So3}, $\mathcal{C}_\varepsilon$ is a smooth Jordan curve. By applying the classical Jordan Curve Theorem (\cite[Lemma 5.2]{So3}), the set $\Int \mathcal{C}_\varepsilon:=
\{f<\varepsilon\}$ is either the bounded or the unbounded component of $\mathbb{R}^2\setminus\mathcal{C}_\varepsilon,$ as it is shown in Figure \ref{fig:2posib} below.

{\centering\vspace{10pt}
\includegraphics[scale=0.25, trim={6cm 7cm 7cm 4cm}, clip]{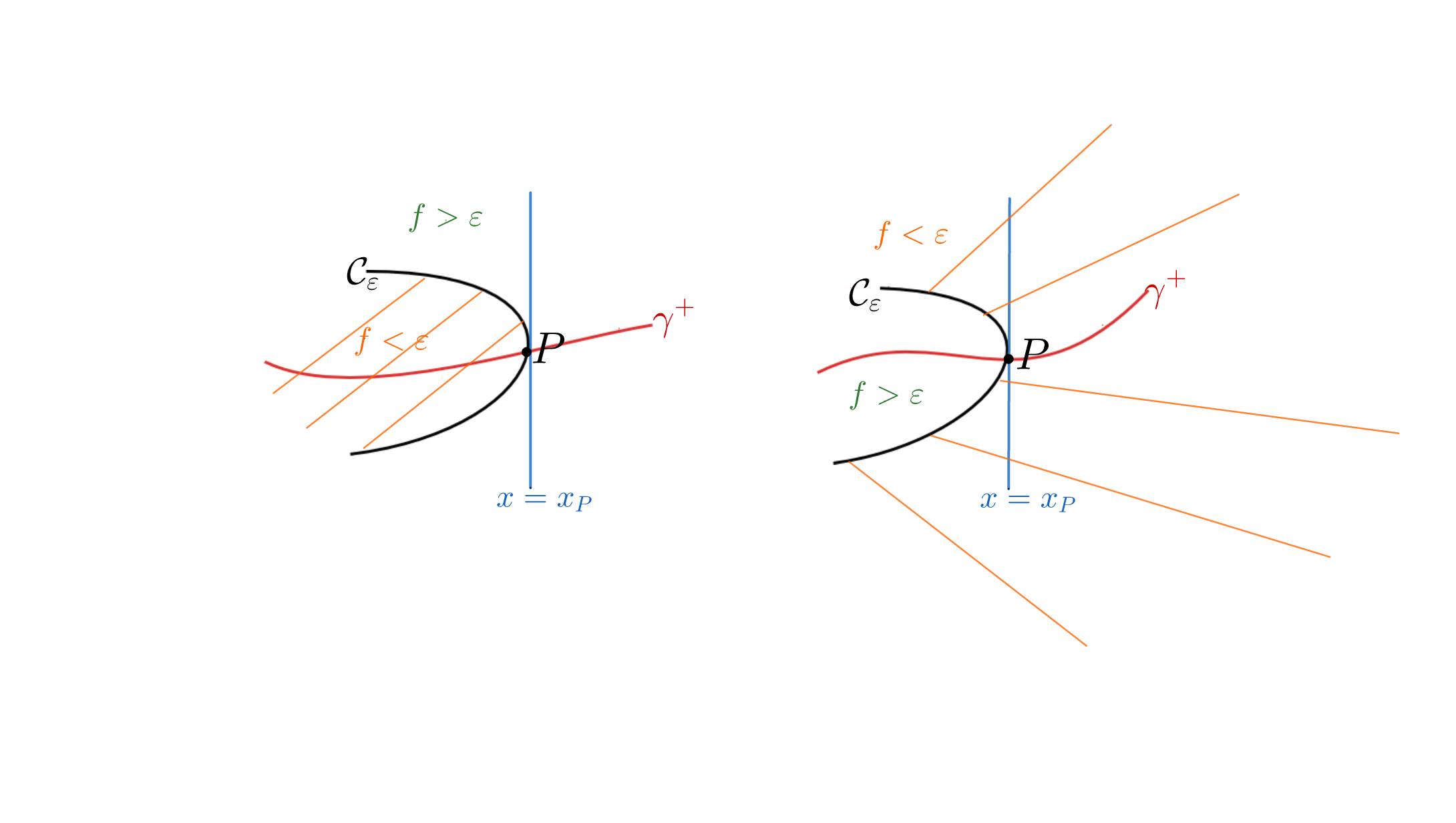} 
\captionof{figure}{Local study in a neighbourhood of a right valley of $\mathcal{C}_\varepsilon$.\label{fig:2posib}}} 
\vspace{10pt}

By \cite[Lemma 5.18]{So3}, one has $f_{|\gamma^+}$ is strictly increasing as one goes to the right, further from the origin. Hence, the only possible configuration is the one where  the line $x=x_P$ does not intersect any point of $\Int \mathcal{C}_\varepsilon$ (see Figure \ref{fig:2posib}, the left case). 

Let us now study the construction of the Poincaré-Reeb tree, locally, in a neighbourhood of a point $P$. After passing to the quotient, the image of the point $P$ is a vertex of the Poincaré-Reeb tree, say $P'\in \mathcal{R}(f,x),$ corresponding to the right-valley $P$. Since locally at $P$ there is only one connected component of $\mathcal{D}_\varepsilon,$ we obtain that the vertex $P'$ has no children, i.e. $P'$ is a leaf of $\mathcal{R}(f,x)$.
\end{proof}

\begin{remark}
A similar reasoning can prove that the vertices of $\mathcal{R}(f,x)$ which correspond to left valleys of $\mathcal{C}_\varepsilon$ are also leaves of $\mathcal{R}(f,x)$. Nevertheless, the vertices corresponding to left crests or to right crests are always internal vertices, provided that the direction of projection is generic.
\end{remark}

\begin{lemma}\label{lemma:CrestTwoChildren}
Let $\gamma^+$ be a right polar half-branch and let $P:=\gamma^+\cap\mathcal{C}_\varepsilon$, $0<\varepsilon\ll 1$, be a right-crest of $\mathcal{C}_\varepsilon.$
Let $\mathcal{R}(f,x)$ denote the Poincaré-Reeb tree. If the direction $x$ is a generic direction, then the vertex $P'\in \mathcal{R}(f,x)$ which is the image of $P$ by the quotient map, has exactly two children. In addition, both children will be situated to the right of $P'.$
\end{lemma}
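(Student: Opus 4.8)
The plan is to analyze the local structure of $\mathcal{D}_\varepsilon$ in a small neighborhood of the right-crest point $P=(x_0,y_0)$, just as was done for valleys in Lemma \ref{lemma:valleyLeaf}, but exploiting the opposite side condition from Remark \ref{remark:righcrest}(i), namely that $x_{|\mathcal{C}_\varepsilon}\geq x_0$ near $P$. First I would invoke \cite[Lemma 5.3]{So3} so that $\mathcal{C}_\varepsilon$ is a smooth Jordan curve, and pick $\varepsilon$ small enough that $P$ is the unique intersection of $\gamma^+$ with $\mathcal{C}_\varepsilon$ in a good neighborhood $V$. Since the polar curve is reduced (our standing hypothesis in this subsection), the tangency of $\mathcal{C}_\varepsilon$ with the vertical line $x=x_0$ at $P$ is of order exactly $2$; hence, locally, $\mathcal{C}_\varepsilon$ lies strictly to the right of $x=x_0$ except at $P$ itself, and the curve meets every nearby vertical line $x=a$ with $a>x_0$, $a$ close to $x_0$, in exactly two points, one above and one below $P$.

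Next I would determine which side is $\Int\mathcal{C}_\varepsilon=\{f<\varepsilon\}$. Using \cite[Lemma 5.18]{So3}, $f$ is strictly increasing along $\gamma^+$ as one moves away from the origin to the right; combined with the crest condition (the curve bends back to the right), this forces the bounded component $\{f<\varepsilon\}$ to be the region locally to the \emph{right} of the arc near $P$, i.e. the side containing the points with $x>x_0$ enclosed by the two branches of $\mathcal{C}_\varepsilon$ emanating from $P$. Consequently, for $a$ slightly larger than $x_0$, the vertical fiber $x=a$ meets $\mathcal{D}_\varepsilon$ in a single segment joining the two points of $\mathcal{C}_\varepsilon\cap(x=a)$, while for $a\leq x_0$ the fiber $x=a$ does not meet $\mathcal{D}_\varepsilon$ near $P$ at all. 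Passing to the Poincaré-Reeb quotient (the construction recalled in \cite[Section 4]{So3}), the two local branches of $\mathcal{C}_\varepsilon$ above and below $P$ become two distinct edges of $\mathcal{R}(f,x)$ attached at the vertex $P'$, each running to the right (increasing $x$), so $P'$ has exactly two children, both situated to the right of $P'$; this also reproves that $P'$ is internal.

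The main obstacle I expect is rigorously justifying the ``exactly two local branches, both going to the right'' claim, i.e. ruling out that $\mathcal{C}_\varepsilon$ near $P$ could wiggle and meet some vertical line $x=a$, $a>x_0$, in more than two points close to $P$, or that a branch could turn back left. The first is handled by the reducedness hypothesis: were there a further tangency of $\mathcal{C}_\varepsilon$ with a vertical line arbitrarily close to $P$, Rolle's theorem applied to $y\mapsto \partial f/\partial y(a,y)$ (or directly to $y\mapsto f(a,y)-\varepsilon$ and its derivative) would produce polar points accumulating at $P$, contradicting that $\gamma^+$ is isolated among polar half-branches in $V$ (Corollary \cite[Corollary 5.22]{So3}) — essentially the same Rolle argument used in Proposition \ref{prop:alternating}. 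The second is exactly the crest condition from Definition \ref{DefCRestValley}. Once these are in place, the translation to the tree is immediate from the definition of the quotient map, and the ordering statement follows since $x$ is strictly monotone along each geodesic from the root (Theorem \ref{mainTh}, or \cite[Theorem 5.31]{So3}), which here just records that $x$ increases from $P'$ along both new edges.
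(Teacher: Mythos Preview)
Your argument contains a genuine error in identifying which side of the curve near $P$ lies in $\mathcal{D}_\varepsilon$. You claim that $\{f<\varepsilon\}$ is ``the side containing the points with $x>x_0$ enclosed by the two branches of $\mathcal{C}_\varepsilon$'', but this is backwards. Since $f$ is strictly increasing along $\gamma^+$ as one moves to the right (\cite[Lemma 5.18]{So3}), the portion of $\gamma^+$ with $x>x_0$ lies in $\{f>\varepsilon\}$, not in $\{f<\varepsilon\}$. Hence the concave side of the local ``('' (the region enclosed by the two arcs, with $x>x_0$) is $\{f>\varepsilon\}$, and $\mathcal{D}_\varepsilon$ lies on the convex side. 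This is exactly what the paper records: locally the line $x=x_P$ has no common point with $\{f>\varepsilon\}$.

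Consequently your description of the fibers is inverted. For $a$ slightly \emph{less} than $x_0$ the fiber $x=a$ meets $\mathcal{D}_\varepsilon$ locally in one segment (the whole local fiber sits in $\mathcal{D}_\varepsilon$); for $a$ slightly \emph{greater} than $x_0$ the fiber meets $\mathcal{D}_\varepsilon$ in \emph{two} segments, one above and one below the region $\{f>\varepsilon\}$ enclosed by the curve. It is this splitting into two components on the right that produces the two children of $P'$ in the Poincar\'e--Reeb quotient, while the single component on the left gives the edge toward the parent. Your own description (``single segment'' for $a>x_0$, ``does not meet $\mathcal{D}_\varepsilon$'' for $a\le x_0$) would make $P'$ a leaf with no parent edge at all; your final step --- passing from ``two branches of $\mathcal{C}_\varepsilon$'' to ``two edges of $\mathcal{R}(f,x)$'' --- therefore does not follow from what you set up, and in any case edges of the Poincar\'e--Reeb tree correspond to connected components of vertical fibers of $\mathcal{D}_\varepsilon$, not to arcs of $\mathcal{C}_\varepsilon$.
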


\begin{proof}
By applying the same reasoning as in the proof of Lemma \ref{lemma:valleyLeaf}, one concludes that locally the line $x=x_P$ has no common point with $(f>\varepsilon)$.

Moreover, construct the Poincaré-Reeb tree locally in a neighbourhood of the point $P$. We obtain the vertex $P'$ corresponding to the right crest $P$ and we take into account the fact that to the left of the vertical line $x=x_P$ there is one connected component that we need to project, while to the right of the line $x=x_P$ there are two connected components to project. Therefore, $P'$ has two children and they are both oriented to the right of P. See Figure \ref{fig:twoChildren}.

{\centering\vspace{10pt}
\includegraphics[scale=0.15]{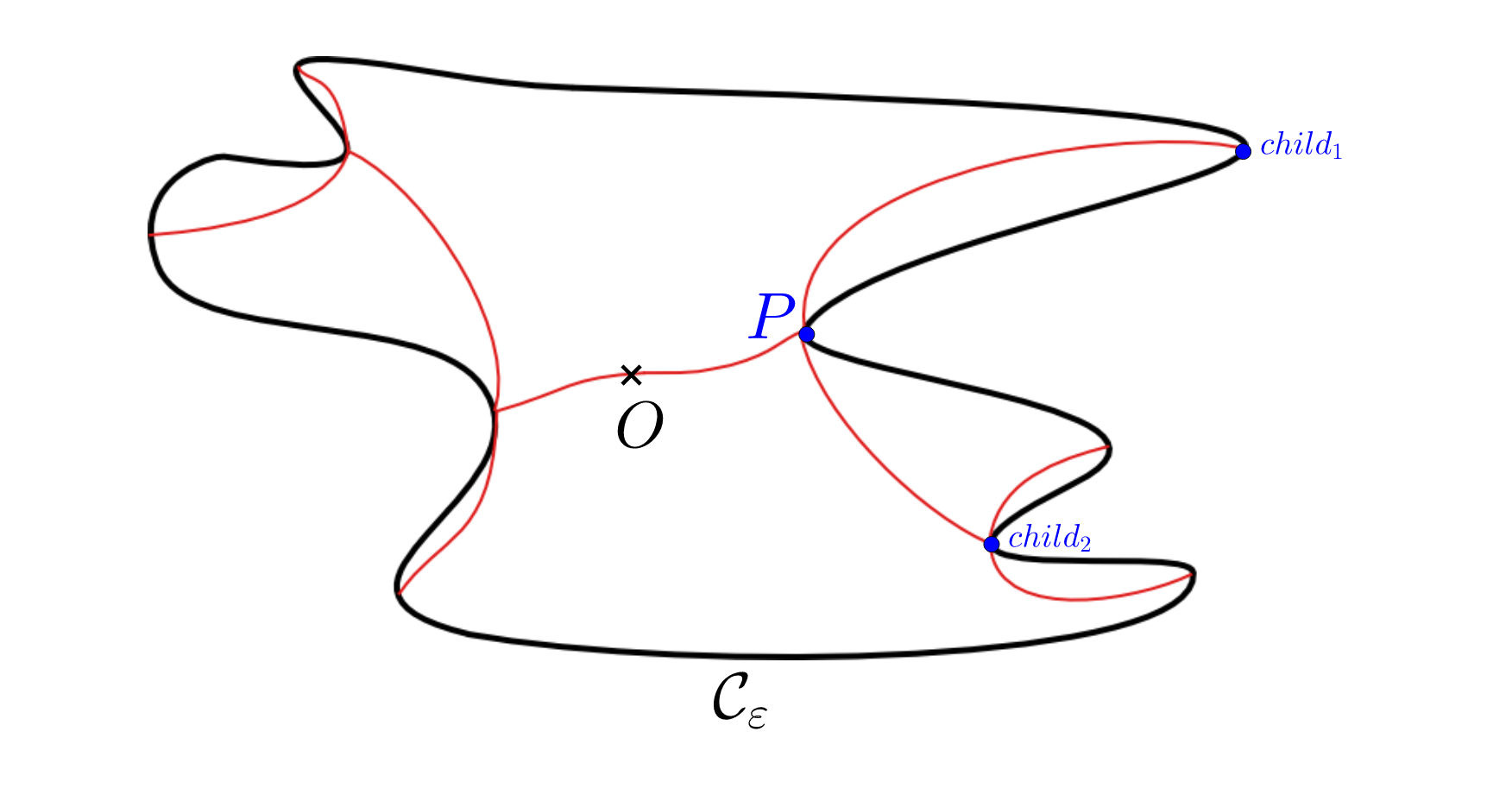} 
\captionof{figure}{The vertex corresponding to a right crest $P$ has two children to the right.\label{fig:twoChildren}}}
\end{proof}
\vspace{10pt}

\begin{remark}
Similar reasoning to prove that the vertices corresponding to left crests have two children, both oriented to the left.
\end{remark}

\subsection{Biordered sets}\label{subs:comparisonKnuth}

The set of graphs of a given finite family of polynomials in a small enough neighbourhood of a common zero can be endowed with two total orders (see \cite{Gh1}): one order given by their position for $x>0$ and the other order given by the position of the polynomials for $x<0$, as shown in Figure \ref{fig:GhysOrders}. Ghys proved (\cite[Theorem, page 31]{Gh1}) that a permutation can be realised by real polynomial graphs near a common zero if and only if it is a separable permutation. 

\newsavebox{\smlmat}
\savebox{\smlmat}{$\sigma:=\left(\begin{smallmatrix}
    1 & 2 & 3\\
    3 &1& 2
  \end{smallmatrix}\right)$}

{\centering\vspace{10pt}
\includegraphics[scale=0.1]{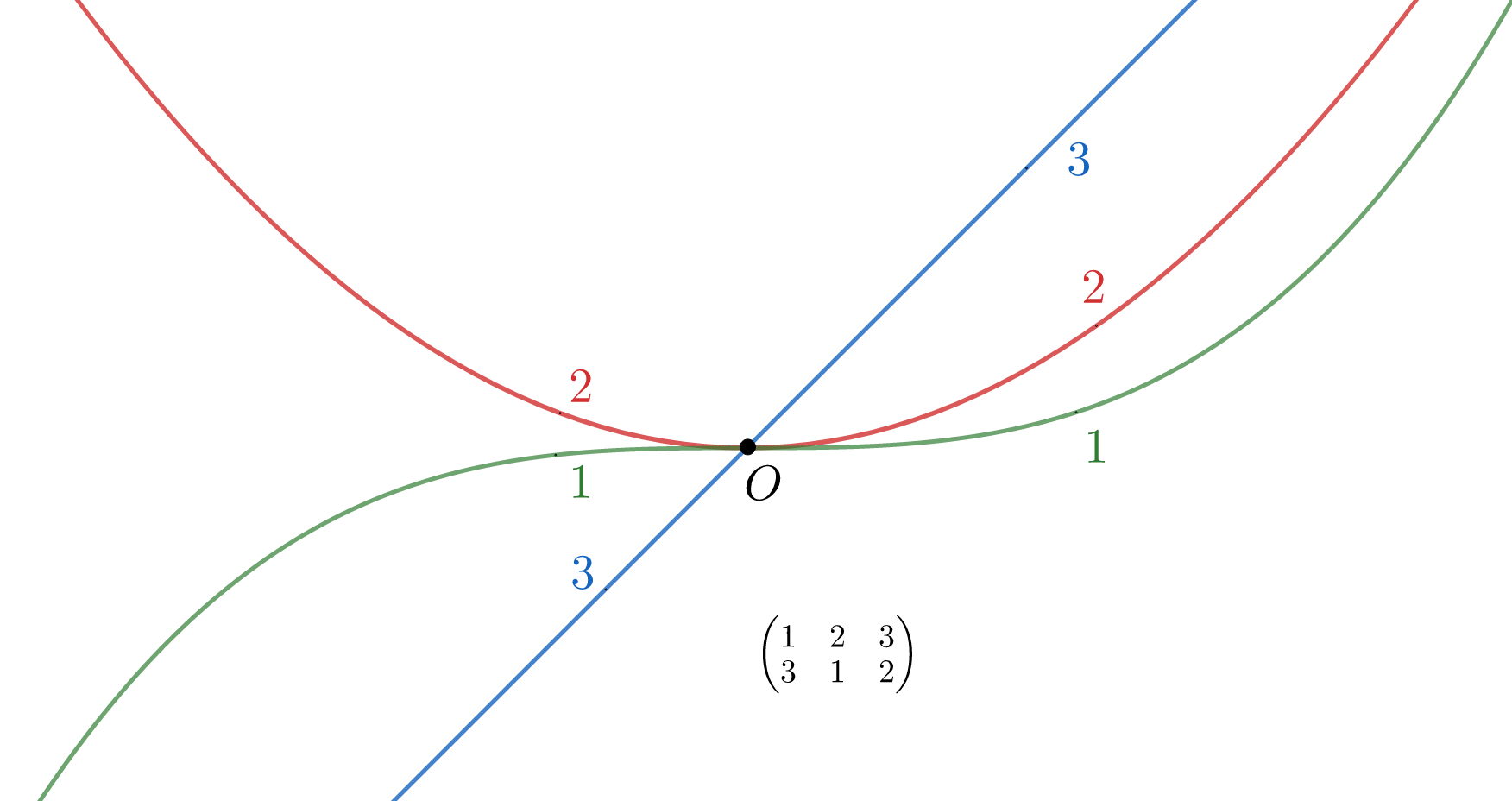} 
\captionof{figure}{Associating the permutation \usebox{\smlmat} to a set of graphs of polynomials near a common zero. For instance, the first polynomial in the order for $x>0$ arrives in the second position in the order for $x<0$. Thus in this case we say by convention that $\sigma(2)=1$. \label{fig:GhysOrders}}}
\vspace{10pt}

In the same manner, our goal in this section is to explain how two total order relations endowing the same set of objects allow us to define a permutation, as the one in Figure \ref{fig:GhysOrders}. This way of defining permutations is inspired from Knuth (see \cite[pages 17-18]{Gh1} for more details, and references therein: \cite{Kn} and \cite{Ki}). For us, the set of objects will be the right crests and right valleys.

Let us consider a finite set $\mathcal{A}$, with $n$ elements: $\#\mathcal{A}=n$. Let us take two total order relations on the set $\mathcal{A}$, denoted by $<_1$, respectively by $<_2.$ To such a pair we can associate an automorphism. The advantage of working with total order relations on the set $\mathcal{A}$ is that one can restrain the total order relations to subsets of $\mathcal{A},$ whereas one cannot restrain automorphisms of $\mathcal{A}$ to subsets of $\mathcal{A}.$

We use the notation $[n]:=\{1,2,\ldots,n\}$. Denote by $\mathrm{Aut(}\mathcal{A}\mathrm{)}$ (respectively $\mathrm{Aut(}[n]\mathrm{)}$) the set of automorphisms of $\mathcal{A}$ (respectively of $[n]$).

A total order relation on $\mathcal{A}$ can be seen as a bijection from the set $[n]$ to the set $\mathcal{A}$. Thus the function $<_1:[n]\rightarrow\mathcal{A}$ (respectively $<_2:\mathcal{A}\rightarrow[n]$) has an inverse function, namely ${<_1}^{-1}:\mathcal{A}\rightarrow[n]$ (respectively ${<_1}^{-1}:[n]\rightarrow\mathcal{A}$). Therefore, we have the following examples of elements of the set $\mathrm{Aut(}\mathcal{A}\mathrm{)}:$ the identity $\mathrm{id}$, $<_2\circ {<_2}^{-1}$ or $<_1\circ {<_2}^{-1}$. Similarly, we have the following examples of elements of the set $\mathrm{Aut(}[n]\mathrm{)}:$ the identity $\mathrm{id}$, ${<_2}^{-1}\circ <_1$ or ${<_1}^{-1}\circ <_2.$ Figure \ref{fig:totalOrdRelBij} illustrates the total order relations viewed as bijections.

\begin{figure}[H]
\centering
\begin{tikzpicture}
  \node (a) {$[n]$};
  \node[right=4cm of a] (b) {$\mathcal{A}$};
  \draw[->]
    (a) edge[bend left] node (f) [above]{$<_1$} (b)
    (a) edge[bend right] node (h) [below]{$<_2$} (b);
\end{tikzpicture}
\caption{Two total order relations viewed as bijections.\label{fig:totalOrdRelBij}}
\end{figure}
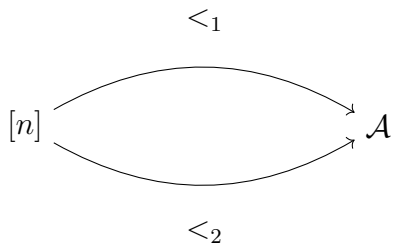

In addition, in Figure \ref{fig:totalOrdRelBijChoice1} and Figure \ref{fig:totalOrdRelBijChoice2} below we can see the two ways of constructing automorphisms, given two total order relations $<_1$ and $<_2$ on the same set.
\begin{figure}[H]
\centering
\begin{tikzpicture}
  \node (a) {$[n]$};
  \node[right=4cm of a] (b) {$[n]$};
  \draw[->]
    (a) edge[bend left] node (f) [above]{$\mathrm{id}$} (b)
    (a) edge[bend right] node (h) [below]{${<_2}^{-1}\circ <_1$} (b);
\end{tikzpicture}
\caption{Two automorphisms of the set $[n]$, which give us the permutation 
$i\mapsto{<_2}^{-1}\circ<_1(i)$.\label{fig:totalOrdRelBijChoice1}}
\end{figure}
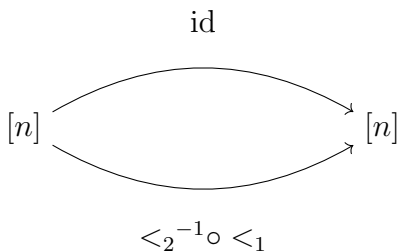

 \begin{figure}[H]
\centering
\begin{tikzpicture}
  \node (a) {$[n]$};
  \node[right=4cm of a] (b) {$[n]$};
  \draw[->]
    (a) edge[bend left] node (f) [above]{$\mathrm{id}$} (b)
    (a) edge[bend right] node (h) [below]{${<_1}^{-1}\circ <_2$} (b);
\end{tikzpicture}
\caption{Two automorphisms of the set $[n]$, which give us the permutation $i\mapsto {<_1}^{-1}\circ <_2(i)$.\label{fig:totalOrdRelBijChoice2}}
\end{figure}
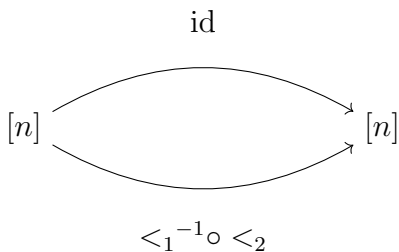

By convention, we shall choose the automorphisms given by the formula from Figure \ref{fig:totalOrdRelBijChoice1}, that is $i\mapsto{<_2}^{-1}\circ<_1(i)$. We can now define the notion of biordered set (see \cite{Gh1}), viewed from the point of view of the comparison between two total order relations on the same set.

\begin{definition}\cite[pages 17-18]{Gh1}\label{def:knuthPerm}
A \defi{biordered set}\index{biordered set} is a finite set, say $\mathcal{A},$ endowed with two total order relations. By convention, given a couple $\{<_1,<_2\}$ of total ordered relations, \defi{the Knuth automorphism of the biordered set}\index{biordered set!Knuth automorphism} $(\mathcal{A},<_1,<_2)$ is defined by  $\sigma(i):={<_2}^{-1}\circ<_1(i).$
\end{definition}
By abuse of language, we call a biordered set, a \textbf{permutation}, when there is no ambiguity.

In other words, let us denote by $a_i$ the element of $\mathcal{A}$ which appears in the position $i$ in the first total order relation. Then let us find the position $p$ of $a_i$ in the second total order relation. Thus Definition \ref{def:knuthPerm} says that $\sigma(i):=p$. 

\begin{example}
Given the set $\mathcal{A}:=\{a_1,a_2,a_3\}$ with the first total order relation $a_1<_1 a_2 <_1 a_3$ and the second total order relation $a_1 <_2 a_3 <_2 a_2$, we have (see Figure \ref{fig:comparatie}):

$${<_2}^{-1}\circ <_1 (1)={<_2}^{-1}(a_1)=1;$$
$${<_2}^{-1}\circ <_1 (2)={<_2}^{-1}(a_2)=3;$$
$${<_2}^{-1}\circ <_1 (3)={<_2}^{-1}(a_3)=2.$$

By Definition \ref{def:knuthPerm}, the Knuth automorphism of $(\mathcal{A},<_1,<_2)$ is $\sigma:=\begin{pmatrix}
    1 & 2 & 3\\
   1 &3& 2
  \end{pmatrix}.$

{\centering\vspace{10pt}
\includegraphics[scale=0.2, trim={0 12cm 0 6cm}, clip]{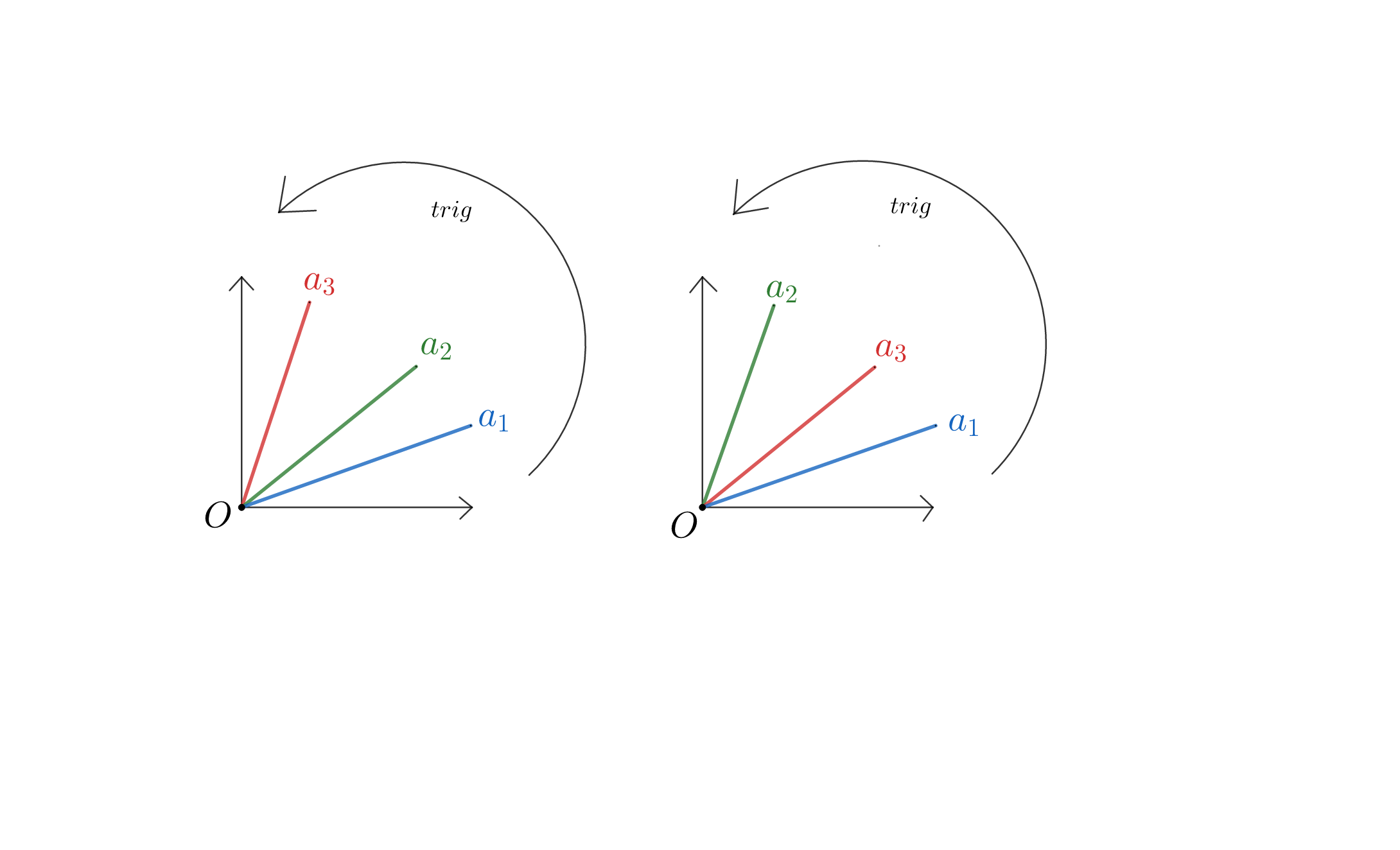} 
\captionof{figure}{The set $\mathcal{A}:=\{a_1,a_2,a_3\}$ endowed with two total order relations.\label{fig:comparatie}}}
\vspace{10pt}
\end{example}

\subsection{Snakes}
For the rest of this paper, let us restrict our attention only to what happens in the semi-plane $x>0$. Suppose that the direction of projection is generic.

We end this section with an application of generic projection directions. Namely, we present a new way to encode the shape of level curves, by what we call the asymptotic snake to the right of the origin. We are inspired by the notion of snake that Arnold introduced in the case of Morse univariate polynomials (see \cite[page 2]{Ar1}, \cite{CN}). The graphs of such polynomials were encoded by the so-called Arnold snakes (see for instance in \cite[Definition 1.8]{So2}). For a very recent construction of a large class of Arnold snakes see \cite[Theorem 5.1]{So2} (and \cite{So1}). 

Here we describe the snakes associated to the curves $\mathcal{C}_\varepsilon$, in the semi-plane $x>0$.

\begin{definition}\label{def:snake}
Let us consider a permutation $\sigma:\{1,2,\ldots,{n}\}\rightarrow \{1,2,\ldots,{n}\}.$ We say that $\sigma$ is \defi{a snake} if $[\sigma(1),\sigma(2),\ldots,\sigma(n)]$ verifies one of the following:
\begin{enumerate}
\item $\sigma(1)>\sigma(2)<\sigma(3)>\sigma(4)<\ldots;$
\item $\sigma(1)<\sigma(2)>\sigma(3)<\sigma(4)>\ldots;$
\item $\ldots  \sigma(n-2)>\sigma(n-1)<\sigma(n);$
\item  $\ldots  \sigma(n-2)<\sigma(n-1)>\sigma(n).$
\end{enumerate}
\end{definition}

\begin{theorem}\label{th:snakesBivar}
Let $f:\mathbb{R}^2\rightarrow\mathbb{R}$ be a polynomial function with a local strict minimum at the origin such that $f(0,0)=0.$ Consider a curve $\mathcal{C}_\varepsilon$ as in Definition \ref{def:levelCEpsilon}, for a sufficiently small $\varepsilon>0$. If the vertical projection is generic with respect to $\mathcal{C}_\varepsilon,$ then there exists a snake $\sigma:\{1,2,\ldots,{n}\}\rightarrow \{1,2,\ldots,{n}\}$, with $\sigma(1)<\sigma(2)>\sigma(3)<\ldots >\sigma(n)$ that encodes the shape of $\mathcal{C}_\varepsilon,$ in the semi-plane $x>0$. Here $n$ denotes the number of right polar half-branches. 
\end{theorem}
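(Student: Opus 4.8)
The plan is to assemble the snake from the combinatorial facts already established about right crests and right valleys, using the biordered-set framework of Subsection \ref{subs:comparisonKnuth}. Let $\gamma_1^+ <_1 \gamma_2^+ <_1 \cdots <_1 \gamma_n^+$ be the right polar half-branches in the canonical (trigonometric) order of Definition \ref{DefOrderRel}, and let $P_i := \mathcal{C}_\varepsilon \cap \gamma_i^+$ with $P_i(x_i,y_i)$, for $0<\varepsilon\ll 1$ inside a good neighbourhood $V$ of the origin. The set $\mathcal{A} := \{P_1,\dots,P_n\}$ carries a second total order $<_2$ given by the values of the first coordinate: $P_i <_2 P_j$ iff $x_i < x_j$; this is a genuine total order because the direction $x$ is generic, so in particular no bitangent is vertical, hence the $x_i$ are pairwise distinct (Proposition \ref{prop:homeo}). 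Then I would \emph{define} $\sigma$ to be the Knuth automorphism of the biordered set $(\mathcal{A},<_1,<_2)$ as in Definition \ref{def:knuthPerm}, i.e. $\sigma(i) := {<_2}^{-1}\circ {<_1}(i)$, which records the rank of $x_i$ among $x_1,\dots,x_n$.

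Next I would prove that this $\sigma$ is a snake of type $\sigma(1)<\sigma(2)>\sigma(3)<\cdots>\sigma(n)$, i.e. that $n$ is odd and the sequence of ranks is down-up-down-up alternating, \emph{starting and ending with a local minimum of the rank sequence}. The engine here is Proposition \ref{prop:alternating} together with its Corollary and Lemma \ref{lemma:lastValley}. Proposition \ref{prop:alternating} tells us that along $P_1,P_2,\dots,P_n$ the points alternate between right valleys and right crests; Lemma \ref{lemma:lastValley} forces both $P_1$ and $P_n$ to be right valleys, which already forces $n$ odd (valleys in odd positions, crests in even positions). The Corollary after Proposition \ref{prop:alternating} then says precisely: if $P_i$ is a valley and $P_{i+1}$ is a crest then $x_{i+1} < x_i$, while if $P_i$ is a crest and $P_{i+1}$ is a valley then $x_i < x_{i+1}$. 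Reading this off position by position: $x_1 > x_2$ (valley to crest), $x_2 < x_3$ (crest to valley), $x_3 > x_4$, \dots, $x_{n-1} < x_n$. Since $\sigma(i)$ is the rank of $x_i$ and ranks are order-preserving bijections, the inequalities among the $x_i$ transfer verbatim to inequalities among the $\sigma(i)$, giving $\sigma(1) > \sigma(2) < \sigma(3) > \cdots < \sigma(n)$, which is the pattern in case (a)/(iii) of Definition \ref{def:snake} after relabelling; to land on the exact form $\sigma(1)<\sigma(2)>\sigma(3)<\cdots>\sigma(n)$ announced in the statement one applies the order-reversing bijection $j \mapsto n+1-j$ on the target (equivalently, one chooses to rank by $-x_i$), which turns every ``$>$'' into ``$<$'' and is still a snake by Definition \ref{def:snake}. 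I would also note explicitly that ``$\sigma$ encodes the shape of $\mathcal{C}_\varepsilon$ in the semi-plane $x>0$'' is exactly the assertion that the relative horizontal positions of the vertical-tangency points of the asymptotic family are recovered from $\sigma$, which is what the biordered-set construction delivers; combined with Theorem \ref{mainTh} (the generic asymptotic Poincaré-Reeb tree is a complete binary tree with a total order on vertices coming from $x$), Lemma \ref{lemma:valleyLeaf} (valleys are leaves) and Lemma \ref{lemma:CrestTwoChildren} (crests are internal vertices with two right children), this fixes the embedded plane tree $\mathcal{R}(f,x)|_{x>0}$, hence the local shape.

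Concretely the steps in order are: (1) invoke genericity to get distinctness of the $x_i$ and define the two total orders and the permutation $\sigma$; (2) invoke Proposition \ref{prop:alternating} and Lemma \ref{lemma:lastValley} to get the crest/valley alternation with valleys at the ends, hence $n$ odd; (3) use the Corollary to Proposition \ref{prop:alternating} to translate ``valley/crest at consecutive half-branches'' into the chain of strict inequalities among the $x_i$; (4) transfer these to the ranks $\sigma(i)$ to conclude $\sigma$ is an alternating permutation of the required boundary type; (5) tie $\sigma$ to the Poincaré-Reeb tree via Lemmas \ref{lemma:valleyLeaf} and \ref{lemma:CrestTwoChildren} and Theorem \ref{mainTh} to justify that $\sigma$ genuinely encodes the shape. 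The main obstacle I anticipate is step (5) — making precise, without circularity, the claim that $\sigma$ ``encodes the shape'', i.e. choosing the right target category (embedded plane trees up to an appropriate notion of equivalence respecting the $x$-order) and checking that the data $(\text{crest/valley labelling}, \sigma)$ is a complete invariant in that category; steps (1)–(4) are essentially bookkeeping on top of results already proved in this section. A minor subtlety to handle carefully in step (3) is that the Corollary is stated only for \emph{consecutive} half-branches, which is exactly what we use — one should not be tempted to compare non-consecutive $x_i$, and indeed the snake condition only constrains consecutive entries, so this matches perfectly.
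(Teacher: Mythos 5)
Your proposal is correct and follows essentially the same route as the paper's proof: build the permutation via the Knuth biordered-set construction on the right crests and valleys (first order from the canonical order on right polar half-branches, second order from the $x$-values), invoke Proposition \ref{prop:alternating} for the crest/valley alternation and Lemma \ref{lemma:lastValley} to force valleys at both ends, and conclude the permutation is a snake of the stated boundary type. You supply more detail than the paper's rather terse proof, and your note that one must rank by $-x_i$ (equivalently post-compose with $j\mapsto n+1-j$) to obtain exactly the pattern $\sigma(1)<\sigma(2)>\cdots>\sigma(n)$ correctly identifies a convention that the paper fixes only implicitly in its worked example, where the green points are read ``from right to left.''
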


\begin{proof}
By \cite[Theorem 5.31]{So3}, the shape stabilises near the origin, for sufficiently small $\varepsilon>0$.

Since by hypothesis the vertical projection is generic with respect to $\mathcal{C}_\varepsilon$, the curve consists of alternating crests and valleys (see Proposition \ref{prop:alternating}).

Endow the set of right crests and valleys with two total order relations and get a permutation,  say $\sigma$ (Definition \ref{def:knuthPerm}). The first order is the one induced by the total order of the right polar half-branches. The second order is given by the vertical foliation induced by $x$. See Figure \ref{fig:twoTotalOrders}.

Since the crests and valleys alternate, the permutation $\sigma$ is a snake. In particular, by Lemma \ref{lemma:lastValley}, we have $\sigma(1)<\sigma(2)>\sigma(3)<\ldots >\sigma(n)$, since the first and last right polar half-branches correspond to right-valleys. 

\end{proof}

\begin{example}
See Figure \ref{fig:twoTotalOrders}. The first order is obtained by reading the right crests and right valleys along the curve (i.e. in the order given by the right polar half-branches), and the other order is given by the $x$-coordinate.

{\centering\vspace{10pt}
\includegraphics[scale=0.2, trim={0cm 7cm 0 6cm}, clip]{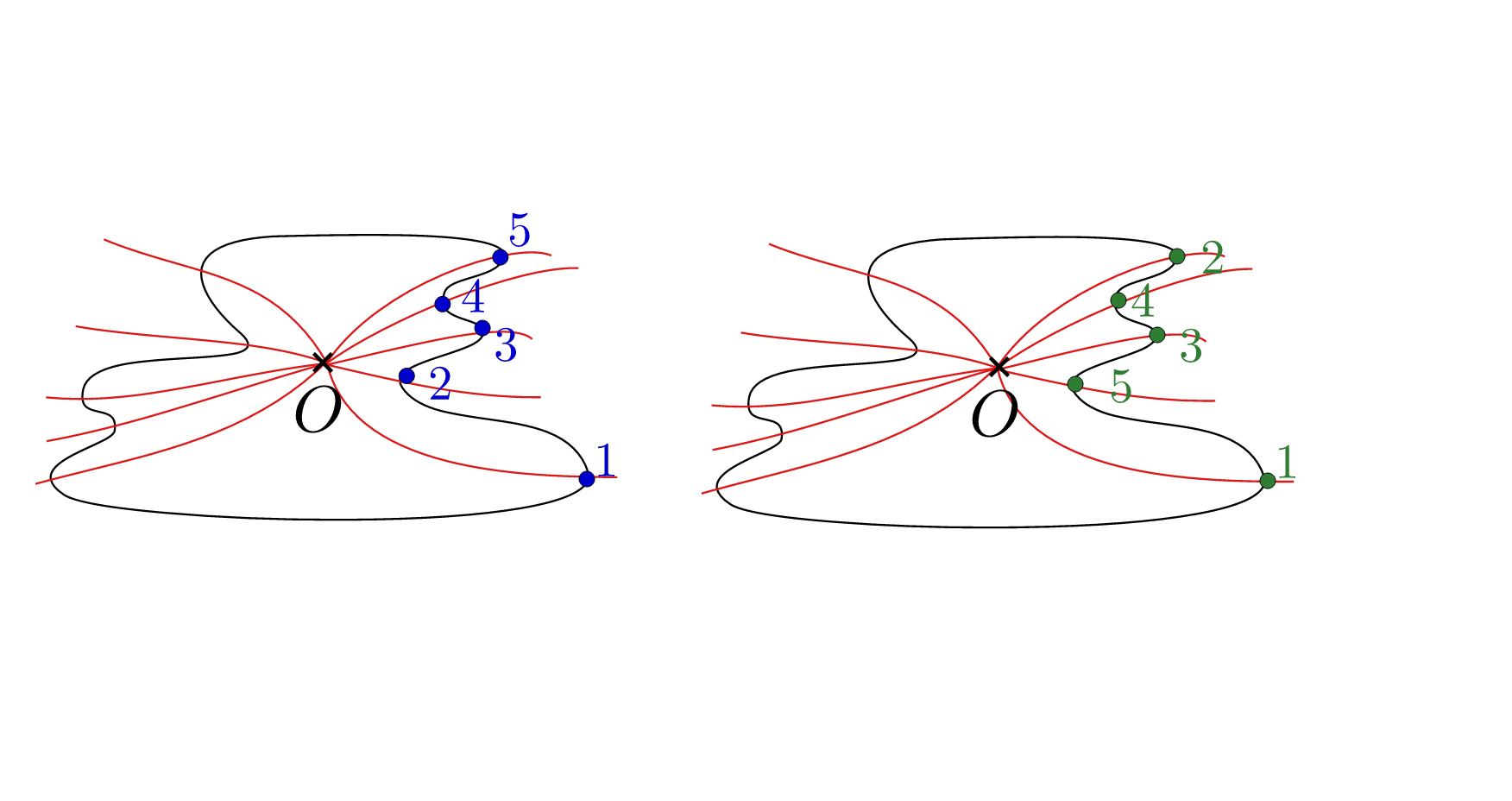} 
\captionof{figure}{Two total order relations on the set of right crests and right valleys.\label{fig:twoTotalOrders}}}
\vspace{10pt}

It is a biordered set of five points, to which we associate the bivariate snake $\sigma=\begin{pmatrix}
1 &2& 3&4&5\\
1 & 5 & 3&4 &2
\end{pmatrix},$ by choosing the convention of reading the green points from right to left and the convention from Definition \ref{def:knuthPerm}. See Figure \ref{fig:snakeRight}.

{\centering\vspace{10pt}
\includegraphics[scale=0.2, trim={0cm 0cm 0 0cm}, clip]{snakeToTheRight1.png} 
\captionof{figure}{The snake to the right of the curve $\mathcal{C}_\varepsilon$.\label{fig:snakeRight}}}
\vspace{10pt}

\end{example}

\begin{remark}
Note that by the construction of the Poincaré-Reeb tree, the order given by the $y-$coordinate may not always  correspond to the order along the level curve, see an example in Figure \ref{fig:exceptie}.

{\centering\vspace{10pt}
\includegraphics[scale=0.15]{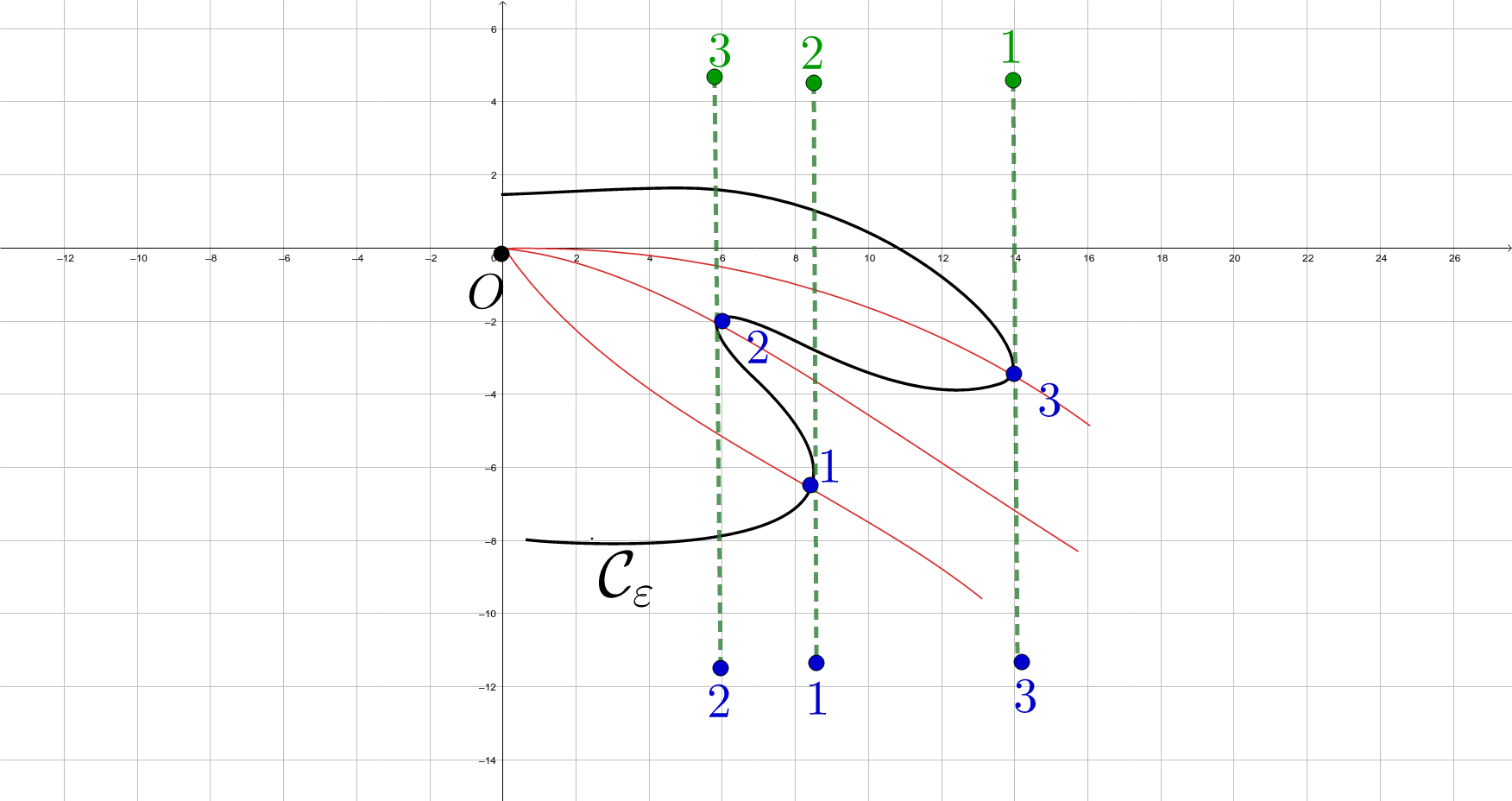} 
\captionof{figure}{The canonical order is the one obtained by reading the crests and valleys along the curve, not by their $y-$coordinate.\label{fig:exceptie}}}
\end{remark}
\vspace{10pt}

\printbibliography

\end{document}